\documentclass[a4paper, oneside, english,reqno]{amsart}

\usepackage[utf8]{inputenx}
\usepackage{amsthm, amsmath, amssymb, amsfonts, mathrsfs, mathtools, stmaryrd}
\usepackage{babel, textcomp, url, enumerate, latexsym, graphicx, varioref, hyperref, multirow, layout, siunitx, booktabs}
\usepackage{pdflscape}
\usepackage{verbatim}
\usepackage{geometry}

\usepackage{tikz, babel, rotating, tikz-cd, pigpen}
\usetikzlibrary{matrix, arrows, decorations.pathmorphing}
\usepackage{cleveref}

\numberwithin{equation}{section}

\usepackage[all]{xy}
\usepackage{microtype}



\DeclarePairedDelimiter{\p}{\lparen}{\rparen}

\DeclarePairedDelimiter{\ip}{\langle}{\rangle}


\theoremstyle{plain}
\newtheorem{theorem}{Theorem}[section]
\newtheorem{lemma}[theorem]{Lemma}

\newtheorem{prop}[theorem]{Proposition}

\newtheorem{corollary}[theorem]{Corollary}

\theoremstyle{definition}
\newtheorem{definition}[theorem]{Definition}

\theoremstyle{remark}
\newtheorem{remark}[theorem]{Remark}

\newcommand{\wt}{\widetilde}

\newcommand{\ol}{\overline}

\newcommand{\defeq}{\vcentcolon=}

\newcommand{\Nis}{\mathrm{Nis}}
\newcommand{\Zar}{\mathrm{Zar}}
\newcommand{\op}{\mathrm{op}}

\newcommand{\rmc}{\mathrm{c}}
\newcommand{\sspt}{\mathbf{1}}
\newcommand{\A}{\mathbf{A}}

\newcommand{\G}{\mathbf{G}}

\newcommand{\K}{\mathbf{K}}

\newcommand{\PP}{\mathbf{P}}

\newcommand{\Z}{\mathbf{Z}}

\newcommand{\frakm}{\mathfrak{m}}

\newcommand{\sfE}{\mathsf{E}}

\newcommand{\scrD}{\mathscr{D}}

\newcommand{\scrF}{\mathscr{F}}

\newcommand{\scrH}{\mathscr{H}}

\newcommand{\scrL}{\mathscr{L}}

\newcommand{\scrU}{\mathscr{U}}
\newcommand{\scrV}{\mathscr{V}}

\newcommand{\scrX}{\mathscr{X}}

\newcommand{\scrZ}{\mathscr{Z}}

\newcommand{\calA}{\mathcal{A}}

\newcommand{\calO}{\mathcal{O}}
\newcommand{\calP}{\mathcal{P}}

\newcommand{\calV}{\mathcal{V}}

\newcommand{\DM}{\mathbf{DM}}

\newcommand{\Ab}{\mathrm{Ab}}

\newcommand{\Sch}{\mathrm{Sch}}

\newcommand{\Sm}{\mathrm{Sm}}
\newcommand{\SmOp}{\mathrm{SmOp}}

\newcommand{\SH}{\mathbf{SH}}

\newcommand{\PSh}{\mathrm{PSh}}

\newcommand{\red}{\mathrm{red}}
\newcommand{\can}{\mathrm{can}}
\newcommand{\pair}{\mathrm{pr}}
\newcommand{\rmMW}{\mathrm{MW}}
\newcommand{\rmM}{\mathrm{M}}

\DeclareMathOperator{\im}{im}

\DeclareMathOperator{\Hom}{Hom}

\DeclareMathOperator{\coker}{coker}
\DeclareMathOperator{\Spec}{Spec}
\DeclareMathOperator{\pr}{pr}

\DeclareMathOperator{\supp}{supp}
\DeclareMathOperator{\id}{id}

\DeclareMathOperator{\ord}{ord}

\DeclareMathOperator{\Div}{div}

\DeclareMathOperator{\Cor}{Cor}

\DeclareMathOperator{\Fr}{Fr}
\DeclareMathOperator{\CH}{CH}

\DeclareMathOperator{\rmK}{K}
\DeclareMathOperator{\rmH}{H}
\DeclareMathOperator{\hwtCor}{h{\widetilde{Cor}}}

\newcommand*\cocolon{
        \nobreak
        \mskip6mu plus1mu
        \mathpunct{}
        \nonscript
        \mkern-\thinmuskip
        {:}
        \mskip2mu
        \relax
}

\newcommand{\SheafHom}{\mathscr{H}\kern-3pt om}
\SelectTips{cm}{10}

\address{H{\aa}kon Kolderup, University of Oslo, Postboks 1053, Blindern, 0316 Oslo, Norway}
\email{\href{mailto:haakoak@math.uio.no}{haakoak@math.uio.no}}

\begin{document}

\title[Homotopy invariance of $\rmMW$-sheaves]{Homotopy invariance of Nisnevich sheaves with \\Milnor--Witt transfers}
\author{Håkon Kolderup}

\subjclass[2010]{14F05, 14F20, 14F35, 14F42, 19E15}
\keywords{Motives, Milnor--Witt $\rmK$-theory, Chow--Witt groups, motivic homotopy theory}

\maketitle

\begin{abstract}
    \noindent
The category of finite Milnor--Witt correspondences, introduced by Calmès and Fasel, provides a new type of correspondences closer to the motivic homotopy theoretic framework than Suslin--Voevodsky's correspondences. A fundamental result of the theory of ordinary correspondences concerns homotopy invariance of sheaves with transfers, and in the present paper we address this question in the setting of Milnor--Witt correspondences. Employing techniques due to Druzhinin, Fasel--Østvær and Garkusha--Panin, we show that homotopy invariance of presheaves with Milnor--Witt transfers is preserved under Nisnevich sheafification.
\end{abstract}

\tableofcontents

\section{Introduction}
A stepping stone toward Voevodsky's construction of the derived category of motives $\DM(k)$ \cite{Voe-motives} is the notion of finite correspondences between smooth $k$-schemes. Such correspondences are in a certain sense multivalued functions taking only finitely many values. By considering finite correspondences instead of ordinary morphisms of schemes, one performs a linearization which allows for extra elbowroom and flexibility, and which in turn makes it possible to prove strong theorems. One of the ``fundamental theorems'' in the theory of correspondences concerns homotopy invariance, and is crucial for constructing the theory of motives.

\begin{theorem}[\protect{\cite[Theorem 5.6]{Voe-hty-inv}}]\label{thm:strict-hty-inv}
For any homotopy invariant presheaf $\scrF$ on the category $\Cor_k$ of finite correspondences, the associated Nisnevich sheaf $\scrF_\Nis$ is also homotopy invariant.
\end{theorem}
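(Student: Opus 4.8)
The plan is to follow Voevodsky's strategy: reduce the homotopy invariance of $\scrF_{\Nis}$ to the assumed homotopy invariance of $\scrF$ by gaining precise control of the Nisnevich sheafification on smooth semilocal schemes over $k$ (which, as in \cite{Voe-hty-inv}, we take to be perfect). First I would verify that $\scrF_{\Nis}$ is again a presheaf with transfers, compatibly with the canonical map $\scrF\to\scrF_{\Nis}$; this is not formal, and one either transports the transfer structure through a Nisnevich-local model via the $\Delta$-construction, or first proves the injectivity statement below and uses it to identify the stalks of $\scrF_{\Nis}$ with local sections of $\scrF$, on which the transfers already live. The real starting point is then the \emph{injectivity theorem}: if $\scrF$ is a homotopy invariant presheaf with transfers and $X$ is the semilocalization of a smooth affine $k$-scheme at finitely many points, with function field $K$, then $\scrF(X)\to\scrF(K)$ is injective. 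This is the first place where finite correspondences rather than honest morphisms are indispensable: one spreads a generically vanishing section out along a relative curve over $X$ by means of a transfer argument.

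Next I would build the coniveau (Cousin) complex of $\scrF_{\Nis}$. Using Voevodsky's analysis of homotopy invariant presheaves with transfers on relative curves --- the machinery of ``standard triples'' $(\ol{X}\to S,\, Z)$, the geometric presentation lemmas, and, crucially, a moving lemma that shifts the supports of correspondences off a prescribed closed subset inside a relative affine curve --- one shows that, for $X$ smooth semilocal over $k$, the complex
\[
0 \to \scrF_{\Nis}(X) \to \bigoplus_{x\in X^{(0)}} \scrF(\kappa(x)) \to \bigoplus_{x\in X^{(1)}} H^1_x(X,\scrF_{\Nis}) \to \cdots
\]
is exact, and that its sheafification is a flasque resolution of $\scrF_{\Nis}$ on the small Nisnevich site of an arbitrary smooth $X$. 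From this one reads off: (i) $\scrF_{\Nis}=\scrF_{\Zar}$, and $\scrF_{\Nis}(X)=\scrF(X)$ for $X$ smooth semilocal; (ii) $H^n_{\Nis}(X,\scrF_{\Nis})=0$ for $n>0$ and $X$ smooth semilocal; (iii) the local cohomology appearing in the complex is built from the contractions $\scrF_{-j}$, which are again homotopy invariant presheaves with transfers, so that the whole structure theory applies to them in one lower dimension.

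Finally, to deduce homotopy invariance --- indeed strict homotopy invariance --- of $\scrF_{\Nis}$, I would compare, for a smooth $X$, the Cousin complexes computing $H^*_{\Nis}(X\times\A^1,\scrF_{\Nis})$ and $H^*_{\Nis}(X,\scrF_{\Nis})$ along the projection $p$. The zero section makes $p^*$ a split injection on every cohomology group; for the reverse inclusion one reduces Nisnevich-locally on $X$ by descent and then inducts on the Krull dimension of $X$, the contractions $\scrF_{-j}$ lowering the dimension, with base case the homotopy invariance of $\scrF$ on residue fields --- concretely, that $\scrF_{\Nis}(\A^1_F)=\scrF(F)$ and $H^{n}_{\Nis}(\A^1_F,\scrF_{\Nis})=0$ for $n>0$ and a field $F$, which follow from exactness of the Cousin complex of $\A^1_F$ together with homotopy invariance of $\scrF$ and $\scrF_{-1}$ over $F$; and that hypothesis is precisely what we are given.

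The main obstacle is the transfer-theoretic moving lemma on relative curves underlying both the injectivity theorem and the exactness of the Cousin complex. Everything around it is bookkeeping with local cohomology, sheafification and induction, but the assertion that a correspondence supported on a ``bad'' closed subset can be moved off it, exploiting the extra room that finite correspondences afford over morphisms, is the geometric heart of the argument --- and it is exactly this step that has to be re-established from scratch, rather than cited, once $\Cor_k$ is replaced by another category of correspondences such as the Milnor--Witt correspondences of Calmès and Fasel.
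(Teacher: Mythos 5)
The paper does not prove this statement at all: it is Voevodsky's theorem, quoted as background, and the body of the paper instead proves the analogue for $\wt\Cor_k$ by a different method. Your outline is a recognizable and essentially correct sketch of Voevodsky's original argument, with two caveats. First, you prove much more than is asked: the Cousin complex, the contractions $\scrF_{-j}$ and the induction on dimension belong to the proof of \emph{strict} homotopy invariance (homotopy invariance of all the presheaves $H^n_\Nis(-,\scrF_\Nis)$); for the degree-zero statement one only needs (i) injectivity $\scrF(\calO_{X,x})\hookrightarrow\scrF(k(X))$ for (henselian) local schemes, (ii) Zariski and Nisnevich excision for the pair $(\A^1_K,\A^1_K\setminus x)$, which give $\scrF(\A^1_{k(X)})\cong\scrF_\Nis(\A^1_{k(X)})$, and (iii) the commutative square comparing restriction to the generic point with the zero section --- exactly the skeleton reproduced in \Cref{section:hty-inv} of this paper for the $MW$-setting. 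Second, the genuine divergence from the paper lies in how excision and injectivity are obtained: you invoke standard triples, geometric presentation lemmas and a moving lemma on relative curves, whereas the route followed here (after Garkusha--Panin and Druzhinin) constructs the required one-sided inverses up to $\A^1$-homotopy explicitly, from interpolation polynomials ($G_m$, $F_{m-1}$, $H_\theta$) and the Cartier-divisor correspondences $\wt\Div(D,\chi)$ of \Cref{section:cartier}, together with almost elementary fibrations. You correctly identify the moving step as the part that must be re-established when the correspondence category changes; the paper's whole point is that the explicit divisor-theoretic construction transplants to $\wt\Cor_k$, where the quadratic twists carried by Chow--Witt groups would complicate the classical relative Picard-group arguments, while your route has the advantage of delivering the full cohomological (strict) statement in one sweep.
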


In \cite{Calmes-Fasel}, Calmès and Fasel introduce a new type of correspondences called finite Milnor--Witt correspondences (or finite $\rmMW$-correspondences for short). Milnor--Witt correspondences provide a setting that is closer to the motivic homotopy theoretic framework than Suslin--Voevodsky's correspondences; for example, the zero-line of sheaves of motivic homotopy groups of the sphere spectrum do not admit ordinary transfers, but they do admit $\rmMW$-transfers \cite{Calmes-Fasel}. Roughly speaking, a finite $\rmMW$-correspondence amounts to an ordinary finite correspondence along with an unramified quadratic form defined on the function field of each irreducible component of the support of the correspondence. We briefly recall some results in the theory of $\rmMW$-correspondences below. Our present goal is to prove a homotopy invariance result similar to \Cref{thm:strict-hty-inv} for sheaves with $\rmMW$-transfers:

\begin{theorem}\label{thm:strict-MW-hty-inv}
Let $k$ be a field of characteristic\footnote{The assumption on the characteristic is there because Milnor--Witt correspondences are currently not defined over nonperfect fields. The only place where this assumption is used is in \Cref{section:A1K} where we need to consider Milnor--Witt correspondences defined over function fields of smooth $k$-schemes, which may in general be nonperfect. Otherwise, all excision results are valid for infinite perfect fields of characteristic different from $2$.} $0$. Then, for any homotopy invariant presheaf $\scrF$ on the category $\wt\Cor_k$ of finite $\rmMW$-correspondences, the associated Nisnevich sheaf $\scrF_\Nis$ is also homotopy invariant.
\end{theorem}

We note that this result is already known by work of D\'eglise and Fasel \cite[Theorem 3.2.9]{MW-cplx}. Their proof uses the fact that there is a functor $\Fr_*(k)\to \wt\Cor_k$ from the category of framed correspondences to $\rmMW$-correspondences. As the analog of \Cref{thm:strict-MW-hty-inv} is known for framed correspondences by work of Garkusha and Panin \cite{hty-inv}, it follows that the desired result also holds for $\wt\Cor_k$. The purpose of this paper is to give a more direct proof by using geometric input provided in \cite[§13]{hty-inv} to produce homotopies in $\wt\Cor_k$. Along the way we obtain results on $\rmMW$-correspondences of independent interest. The proof strategy is due to Druzhinin \cite{Druzhinin} and Garkusha--Panin \cite{hty-inv}, and uses techniques developed in \cite{MW-cancel}.

\subsection*{Recollections on Milnor--Witt correspondences}
The Milnor--Witt $\rmK$-groups $\rmK_n^{\rmMW}(k)$ of a field $k$ arose in the context of motivic stable homotopy groups of spheres. More precisely, in \cite[Theorem 6.4.1]{Morel-sphere-spt} Morel established isomorphisms
\begin{align}
\pi_{n,n}\sspt\cong \rmK^{\rmMW}_{-n}(k)\label{eq:MW-iso}
\end{align}
for all $n\in\Z$, where $\sspt\in\SH(k)$ denotes the sphere spectrum. The groups $\rmK_n^{\rmMW}(k)$ admit a description in terms of generators and relations:

\begin{definition}[Hopkins--Morel]
Let $k$ be a field. The \emph{Milnor--Witt $\rmK$-theory} $\rmK_*^{\rmMW}(k)$ of the field $k$ is the graded associative $\Z$-algebra with one generator $[a]$ for each unit $a\in k^\times$, of degree $+1$, and one generator $\eta$ of degree $-1$, subject to the following relations:
\begin{enumerate}
\item[(1)] $[a][1-a]=0$ for any $a\in k^\times\setminus\{1\}$\quad(Steinberg relation).
\item[(2)] $\eta[a]=[a]\eta$\quad ($\eta$-commutativity).
\item[(3)] $[ab]=[a]+[b]+\eta[a][b]$\quad (twisted $\eta$-logarithmic relation).
\item[(4)] $(2+\eta[-1])\eta=0$\quad (hyperbolic relation).
\end{enumerate}
We let $\rmK_n^{\rmMW}(k)$ denote the $n$-th graded piece of $\rmK_*^{\rmMW}(k)$. The product $[a_1]\cdots[a_n]\in \rmK_n^{\rmMW}(k)$ may also be denoted by $[a_1,\dots,a_n]$.
\end{definition}

Under the isomorphism (\ref{eq:MW-iso}) above, the element $[a]\in \rmK_1^{\rmMW}(k)$ corresponds to a class $[a]\in\pi_{-1,-1}\sspt$. A representative for $[a]$ is given by the pointed map 
\[[a]\colon \Spec(k)_+\to (\G_m,1)\] 
sending the non-basepoint to the point $a\in\G_m$. On the other hand, the element $\eta\in \rmK_{-1}^{\rmMW}(k)$ corresponds to the motivic Hopf map $\eta\in\pi_{1,1}\sspt$ represented by the natural projection \cite[§6]{Morel-sphere-spt}
\[\eta\colon \A^2\setminus0\to \PP^1.\]
As the sphere spectrum is initial in the category of motivic ring spectra, the homotopy groups $\pi_{p,q}\sfE$ of a motivic ring spectrum $\sfE$ inherits the relations of $\pi_{p,q}\sspt$ via the unit map $\sspt\to\sfE$. Thus Milnor--Witt $\rmK$-theory is a fundamental object in motivic homotopy theory. In \cite{Calmes-Fasel}, Calmès and Fasel employ sheaves of Milnor--Witt $\rmK$-theory to set up the theory of $\rmMW$-correspondences. Based on the fact that the group $\Cor_k(X,Y)$ of finite correspondences from $X$ to $Y$ can be expressed as a colimit of Chow groups with support,
\begin{align*}
\Cor_k(X,Y)&=\varinjlim_{T\in\calA(X,Y)}\rmH_T^{d_Y}(X\times Y,\K_{d_Y}^\rmM)\\
&=\varinjlim_{T\in\calA(X,Y)}\CH_T^{d_Y}(X\times Y),
\end{align*}
Calmès and Fasel replace Milnor $\rmK$-theory (and Chow groups) with (twisted) Milnor--Witt $\rmK$-theory (and Chow--Witt groups), and define the group of finite $\rmMW$-correspondences from $X$ to $Y$ as
\begin{align*}
\wt\Cor_k(X,Y)&\defeq \varinjlim_{T\in\calA(X,Y)}\rmH_T^{d_Y}(X\times Y,\K_{d_Y}^{\rmMW},p_Y^*\omega_{Y/k})\\
&=\varinjlim_{T\in\calA(X,Y)}\wt\CH_T^{d_Y}(X\times Y,p_Y^*\omega_{Y/k}),
\end{align*}
where $p_Y\colon X\times Y\to Y$ is the projection. Here $Y$ is assumed to be equidimensional of dimension $d_Y$, and $\calA(X,Y)$ is the partially ordered set of closed subsets $T$ of $X\times Y$ such that each irreducible component of $T$ (with its reduced structure) is finite and surjective over $X$. Moreover, $\K_n^{\rmMW}$ is the $n$-th unramified Milnor--Witt $\rmK$-theory sheaf, as defined in \cite[§5]{Morel-over-a-field}. We note that the Nisnevich cohomology groups $\rmH^p(X,\K_q^{\rmMW},\scrL)$ of the Milnor--Witt sheaf $\K_q^{\rmMW}(\scrL)$ twisted by a line bundle $\scrL$ can be computed using the Rost--Schmid complex \cite[Chapter 5]{Morel-over-a-field}, which provides a flabby resolution of $\K_q^{\rmMW}(\scrL)$. Recall that the $p$-th term of the Rost--Schmid complex is given by
\[
C^p(X,\K_q^{\rmMW},\scrL)\defeq\bigoplus_{x\in X^{(p)}}\rmK_{q-p}^{\rmMW}(k(x),\wedge^p(\frakm_x/\frakm_x^2)^\vee\otimes_{k(x)}\scrL_x),
\]
where $X^{(p)}$ denotes the set of codimension $p$-points of $X$. We let $\wt\Cor_k$ denote the category of finite $\rmMW$-correspondences. The category $\wt\Cor_k$ is symmetric monoidal, and comes equipped with an embedding $\Sm_k\to\wt\Cor_k$ from smooth $k$-schemes, as well as a forgetful functor $\wt\Cor_k\to\Cor_k$ to Suslin--Voevodsky's correspondences; see \cite{Calmes-Fasel} for details.

Let $\wt\PSh(k)$ denote the category of presheaves with $\rmMW$-transfers, i.e., additive presheaves of abelian groups $\scrF\colon\wt\Cor_k^\op\to\Ab$. As noted in \cite{Calmes-Fasel}, there are more presheaves on $\wt\Cor_k$ than on $\Cor_k$. One example is of course provided by the sheaves $\K_*^{\rmMW}$, which admit $\rmMW$-transfers but not ordinary transfers \cite{Calmes-Fasel}. Among the various presheaves with $\rmMW$-transfers, the homotopy invariant ones will be of most interest to us.

\begin{definition}
A presheaf $\scrF\in\wt\PSh(k)$ with $\rmMW$-transfers is \emph{homotopy invariant} if for each $X\in\Sm_k$, the projection $p\colon X\times\A^1\to X$ induces an isomorphism $p^*\colon\scrF(X)\xrightarrow{\cong}\scrF(X\times\A^1)$. Equivalently, the zero section $i_0\colon X\to X\times\A^1$ induces an isomorphism $i^*_0\colon\scrF(X\times\A^1)\xrightarrow{\cong}\scrF(X)$.
\end{definition}

Let us also mention that by \cite[Lemma 1.2.10]{MW-cplx}, the Nisnevich sheaf $\scrF_\Nis$ associated to a presheaf $\scrF\in\wt\PSh(k)$ comes equipped with a unique $\rmMW$-transfer structure. This result follows essentially from \cite[Lemma 1.2.6]{MW-cplx}, which states that if $p\colon U\to X$ is a Nisnevich covering of a smooth $k$-scheme $X$, and if $\wt\rmc(X)$ denotes the representable presheaf $\wt\rmc(X)(Y)\defeq\wt\Cor_k(Y,X)$, then the \v Cech-complex $\wt\rmc(U_X^\bullet)\to\wt\rmc(X)\to0$ is exact on the associated Nisnevich sheaves.

\subsection*{Extending presheaves to essentially smooth schemes}
In this paper we will consider two closely related ways to extend presheaves on $\wt\Cor_k$ to essentially smooth schemes over $k$. This allows us to formulate statements also about local schemes or henselian local schemes.

\begin{enumerate}
\item The first method is the standard way of defining the value of a presheaf on limits of schemes as a colimit of the presheaf values, and will be used in Sections \ref{section:injectivity}--\ref{section:hty-inv}. We briefly recall some details on this matter, following \cite[§5.1]{Calmes-Fasel}. Let $\calP$ be the category consisting of projective systems $((X_\lambda)_{\lambda\in I},f_{\lambda\mu})$ such that $X_\lambda\in\Sm_k$ and such that the transition morphisms $f_{\lambda\mu}\colon X_\lambda\to X_\mu$ are affine and étale. By \cite[§5.1]{Calmes-Fasel}, the limit of such a projective system exists in the category of schemes $\Sch$. Moreover, the functor $\calP\to\Sch$ sending a projective system to its limit defines an equivalence of categories between $\calP$ and the full subcategory $\ol\Sm_k$ of $\Sch$ consisting of schemes over $k$ that are limits of projective systems from $\calP$ \cite[§5.1]{Calmes-Fasel}. 

Now let $\scrF$ be a presheaf on $\Sm_k$. We can extend $\scrF$ to a presheaf $\ol\scrF$ on $\ol\Sm_k$ by setting $\ol\scrF((X_\lambda)_{\lambda\in I})\defeq\varinjlim_{\lambda\in I}\scrF(X_\lambda)$. By \cite[§5.1]{Calmes-Fasel} this gives a well defined presheaf on $\ol\Sm_k$ that coincides with $\scrF$ when restricted to $\Sm_k$. In particular, we can extend the presheaf $\wt\Cor_k(-,X)$ to $\ol\Sm_k$.

The above construction can furthermore be carried out for Chow--Witt groups with support. Roughly speaking, we can define a category $\wt\calP$ consisting of projective systems of triples $(X_\lambda,Z_\lambda,\scrL_\lambda)$ of a smooth $k$-scheme $X_\lambda$, a closed subscheme $Z_\lambda$ of $X_\lambda$ and a line bundle $\scrL_\lambda$ on $X_\lambda$. If the limit $(X,Z,\scrL)$ of such a projective system is such that $X$ is regular, then the pullback induces an isomorphism \cite[Lemma 5.7]{Calmes-Fasel}
\[
\varinjlim_{\lambda}\wt\CH_{Z_\lambda}^*(X_\lambda,\scrL_\lambda)\xrightarrow{\cong}\wt\CH_Z^*(X,\scrL).
\]
This allows us to pass to Chow--Witt groups of local schemes $U$ in order to produce MW-correspondences on $U$, which will be needed in Sections \ref{section:injectivity}--\ref{section:Nis-surj}. However, in order to unburden our notation we may drop the bar both from $\ol\scrF$ and $\ol\Sm_k$ when evaluating presheaves on limits of schemes.

\item A second method of extending presheaves will be carried out in \Cref{section:A1K} in order to show that certain results that hold for open subsets of $\A^1_k$ are also valid for open subsets of $\A^1_K$, where $K$ is some finitely generated field extension $K$ of the ground field $k$. This trick was suggested to the author by I. Panin, and involves extending a presheaf on $\wt\Cor_k$ to a certain full subcategory of $\wt\Cor_K$. See \Cref{section:A1K} for details.
\end{enumerate}

\subsection*{Outline} 
In \Cref{section:pairs} we establish some notation and collect a few lemmas needed later on. 

In \Cref{section:cartier} we review how Cartier divisors give rise to finite $\rmMW$-correspondences, following \cite{MW-cancel}. This gives a procedure to construct desired homotopies in the later sections. 

In \Cref{section:Zar-excision} we prove the first main ingredient of the proof of \Cref{thm:strict-MW-hty-inv}, which is a Zariski excision result for $\rmMW$-presheaves. More precisely, in \Cref{thm:Zar-excision} we show that if $V\subseteq U\subseteq\A^1$ are two Zariski open neighborhoods of a closed point $x\in\A^1$, then the inclusion $i\colon V\hookrightarrow U$ induces an isomorphism\footnote{We show in \Cref{section:inj-aff-line} that the restriction maps $\scrF(U)\to\scrF(U\setminus x)$ and $\scrF(V)\to\scrF(V\setminus x)$ are injective, justifying the notation used in the formulation of Zariski excision.}
\[i^*\colon \dfrac{\scrF(U\setminus x)}{\scrF(U)}\xrightarrow{\cong} \dfrac{\scrF(V\setminus x)}{\scrF(V)}\]
for any homotopy invariant $\scrF\in\wt\PSh(k)$. The proof of Zariski excision consists of producing left and right inverses in $\wt\Cor_k$ of $i$ up to homotopy. This is done in Sections \ref{section:Zar-inj} and \ref{section:Zar-surj}.

In \Cref{section:A1K}, we extend the results of \Cref{section:Zar-excision} to open subsets of $\A^1_K$, where $K$ is a finitely generated field extension of the ground field $k$. 

In \Cref{section:injectivity} we prove a ``moving lemma'' for $\rmMW$-correspondences (see \Cref{thm:inj-loc-sch}), which can be informally stated as follows. Let $X\in\Sm_k$, and pick a closed point $x\in X$ along with a closed subscheme $Z\subsetneq X$ containing the point $x$. Then, up to $\A^1$-homotopy, we are able to ``move the point $x$ away from $Z$'' using $\rmMW$-correspondences. See \Cref{section:injectivity} for more details.  

In \Cref{section:Nis-excision} we prove the last main ingredient of the proof of \Cref{thm:strict-MW-hty-inv}, namely a Nisnevich excision result. The situation is as follows. Given an elementary distinguished Nisnevich square
\[\begin{tikzcd}
V'\ar{r}\ar{d} & X'\ar[d,"\Pi"]\\
V\ar{r} & X
\end{tikzcd}\]
with $X$ and $X'$ affine and $k$-smooth, let $S\defeq (X\setminus V)_\red$ and $S'\defeq(X'\setminus V')_\red$. Assume in addition that $S$ is $k$-smooth. Suppose that $x\in S$ and $x'\in S'$ are two points satisfying $\Pi(x')=x$, and put $U\defeq\Spec(\calO_{X,x})$ and $U'\defeq\Spec(\calO_{X',x'})$. Then the map $\Pi$ induces an isomorphism\footnote{It follows from \Cref{thm:inj-loc-sch} that the restriction maps $\scrF(U)\to\scrF(U\setminus S)$ and $\scrF(U')\to\scrF(U'\setminus S')$ are injective, justifying the notation used in the formulation of Nisnevich excision. See \Cref{section:Nis-excision} for details.}
\[
\Pi^*\colon \dfrac{\scrF(U\setminus S)}{\scrF(U)}\xrightarrow{\cong} \dfrac{\scrF(U'\setminus S')}{\scrF(U')}
\]
for any homotopy invariant $\scrF\in\wt\PSh(k)$. Again the proof consists of producing left and right inverses to $\Pi$ up to homotopy, which is done in Sections \ref{section:Nis-inj} and \ref{section:Nis-surj}. 

Finally, in \Cref{section:hty-inv} we will see how homotopy invariance of the associated Nisnevich sheaf $\scrF_\Nis$ follows from the above results.

\subsection*{Conventions}Throughout we will assume that $k$ is an infinite perfect field of characteristic different from $2$. In Sections \ref{section:A1K} and \ref{section:hty-inv}, $k$ is furthermore assumed to be of characteristic $0$. We let $\Sm_k$ denote the category of smooth separated schemes of finite type over $k$. All undecorated fiber products mean fiber product over $k$. Throughout, the symbols $i_0$ and $i_1$ will denote the rational points $i_0,i_1\colon\Spec(k)\to \A^1$ given by $0$ and $1$, respectively. 

We will frequently abuse notation and write simply $f\in\wt\Cor_k(X,Y)$ for $\wt\gamma_f$, where $\wt\gamma_f$ is the image of a morphism of schemes $f\colon X\to Y$ under the embedding $\wt\gamma\colon \Sm_k\to\wt\Cor_k$ of \cite[§4.3]{Calmes-Fasel}. We let $\sim_{\A^1}$ denote $\A^1$-homotopy equivalence. Following Calmès--Fasel \cite{Calmes-Fasel}, if $p_Y\colon X\times Y\to Y$ is the projection, we may write $\omega_Y$ as shorthand for $p_Y^*\omega_{Y/k}$ if no confusion is likely to arise. Note that $\omega_Y$ is then canonically isomorphic to $\omega_{X\times Y/X}$. In general, given a morphism of schemes $f\colon X\to Y$ we write $\omega_f\defeq\omega_{X/k}\otimes f^*\omega_{Y/k}^\vee$.

\subsection*{Acknowledgments} I am very grateful to Ivan Panin for sharing with me his knowledge on homotopy invariance, and for suggesting to me the trick of \Cref{section:A1K}. Furthermore, I am grateful to Jean Fasel for valuable comments and suggestions, and to Paul Arne Østvær for proofreading and for suggesting the problem to me. I thank Institut Mittag-Leffler for the kind hospitality during spring 2017. Finally, I thank the anonymous referee for many helpful comments and remarks.

The work on this paper was supported by the RCN Frontier Research Group Project no. 250399.

\section{Pairs of Milnor--Witt correspondences}\label{section:pairs}
We will frequently encounter the situation of a pair $U\subseteq X$ of schemes, and we will be led to study the associated quotient $\scrF(U)/\im(\scrF(X)\to\scrF(U))$ for a given presheaf $\scrF$ on $\wt\Cor_k$. It is therefore notationally  convenient to introduce a category $\wt\Cor_k^\pair$ of pairs of $\rmMW$-correspondences.

Following \cite{hty-inv} we let $\SmOp_k$ denote the category whose objects are pairs $(X,U)$ with $X\in\Sm_k$ and $U$ a Zariski open subscheme of $X$, and whose morphisms are maps $f\colon(X,U)\to(Y,V)$, where $f\colon X\to Y$ is a morphism of schemes such that $f(U)\subseteq V$. Below we extend this notion of morphisms of pairs to $\rmMW$-correspondences.

\begin{definition}[\protect{\cite[Definition 2.3]{hty-inv}}]
Let $\wt\Cor_k^\pair$ denote the category whose objects are those of $\SmOp_k$ and whose morphisms are defined as follows. For $(X,U),(Y,V)\in\SmOp_k$, with open immersions $j_X\colon U\to X$ and $j_Y\colon V\to Y$, let
\[
\wt\Cor_k^\pair((X,U),(Y,V))\defeq\ker\p*{\wt\Cor_k(X,Y)\oplus\wt\Cor_k(U,V)\xrightarrow{j_X^*-(j_Y)_*}\wt\Cor_k(U,Y)}.
\]
Thus a morphism in $\wt\Cor_k^\pair$ is a pair $(\alpha,\beta)$, where $\alpha\in\wt\Cor_k(X,Y)$ and $\beta\in\wt\Cor_k(U,V)$, such that the diagram
\[\begin{tikzcd}
X\ar[r,"\alpha"] & Y\\
U\ar[u,"j_X"]\ar[r,"\beta"] & V\ar[u,swap,"j_Y"]
\end{tikzcd}\]
commutes in $\wt\Cor_k$. Composition in $\wt\Cor_k^\pair$ is defined by $(\alpha,\beta)\circ(\gamma,\delta)\defeq(\alpha\circ\gamma,\beta\circ\delta)$.
\end{definition}

The category $\SmOp_k$ contains $\Sm_k$ as a full subcategory, the embedding $\Sm_k\to \SmOp_k$ being defined by $X\mapsto (X,\varnothing)$. Moreover, the embedding $\Sm_k\to \SmOp_k$ induces a fully faithful embedding $\wt\Cor_k\to\wt\Cor_k^\pair$ which on morphisms is given by $\alpha\mapsto(\alpha,0)$.

\begin{prop}[\protect{\cite[Construction 2.8]{hty-inv}}]\label{prop:construction}
Suppose that $\scrF$ is a presheaf on $\wt\Cor_k$. For any $(X,U)\in\SmOp_k$, let $\scrF(X,U)\defeq \scrF(U)/\im(\scrF(X)\to\scrF(U))$. Then, for any $(\alpha,\beta)\in\wt\Cor_k^\pair((X,U),(Y,V))$, $\scrF$ induces a morphism
\[
(\alpha,\beta)^*\colon \scrF(Y,V)\to \scrF(X,U).
\]
\end{prop}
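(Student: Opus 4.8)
The plan is to unwind the definition of a morphism in $\wt\Cor_k^\pair$ and check that the obvious candidate for $(\alpha,\beta)^*$ is well-defined on the quotient. Given $(\alpha,\beta)\in\wt\Cor_k^\pair((X,U),(Y,V))$, by definition we have $\alpha\in\wt\Cor_k(X,Y)$, $\beta\in\wt\Cor_k(U,V)$, and the square
\[\begin{tikzcd}
X\ar[r,"\alpha"] & Y\\
U\ar[u,"j_X"]\ar[r,"\beta"] & V\ar[u,swap,"j_Y"]
\end{tikzcd}\]
commutes in $\wt\Cor_k$, i.e.\ $j_X^*\alpha = (j_Y)_*\beta$ as elements of $\wt\Cor_k(U,Y)$, where I write $(j_Y)_*\beta = j_Y\circ\beta$ and $j_X^*\alpha = \alpha\circ j_X$. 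Applying the presheaf $\scrF$ turns this into a commuting square of abelian groups
\[\begin{tikzcd}
\scrF(Y)\ar[r,"\alpha^*"]\ar[d,swap,"j_Y^*"] & \scrF(X)\ar[d,"j_X^*"]\\
\scrF(V)\ar[r,"\beta^*"] & \scrF(U).
\end{tikzcd}\]
Here I abusively write $\alpha^* = \scrF(\alpha)\colon\scrF(Y)\to\scrF(X)$, etc.

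First I would define $(\alpha,\beta)^*$ at the level of the groups $\scrF(V)$ and $\scrF(U)$ to be $\beta^* = \scrF(\beta)$, and then show it descends to the quotients $\scrF(Y,V) = \scrF(V)/\im(j_Y^*\colon\scrF(Y)\to\scrF(V))$ and $\scrF(X,U) = \scrF(U)/\im(j_X^*\colon\scrF(X)\to\scrF(U))$. Concretely, it suffices to verify that $\beta^*\bigl(\im(j_Y^*)\bigr)\subseteq\im(j_X^*)$. This is immediate from the commuting square above: for $s\in\scrF(Y)$, we have $\beta^*(j_Y^* s) = j_X^*(\alpha^* s)\in\im(j_X^*)$. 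Hence $\beta^*$ factors through a well-defined homomorphism $(\alpha,\beta)^*\colon\scrF(Y,V)\to\scrF(X,U)$, which is what the proposition asserts.

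There is essentially no obstacle here; the content is purely formal, amounting to the observation that applying the additive functor $\scrF$ to the defining commutative square of a $\pair$-morphism yields a commutative square of abelian groups, and that a map of cokernels (or rather of quotients by images of parallel maps) is induced whenever such a square commutes. The only minor points to be careful about are bookkeeping of variance — $\alpha^*$ and $j_X^*$ go in the direction opposite to $\alpha$ and $j_X$ since $\scrF$ is contravariant — and the fact that $\scrF(X,U)$ is defined as a quotient, so one should note explicitly that passing to quotients is functorial in the relevant sense. One could also record, though it is not strictly demanded by the statement, that this assignment is itself functorial: $(\id,\id)^* = \id$ and $\bigl((\alpha,\beta)\circ(\gamma,\delta)\bigr)^* = (\gamma,\delta)^*\circ(\alpha,\beta)^*$, which follows since $\scrF$ is a functor and composition in $\wt\Cor_k^\pair$ is componentwise; this makes $\scrF(-,-)$ a presheaf on $\wt\Cor_k^\pair$.
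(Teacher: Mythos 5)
Your argument is correct and is exactly the intended one (the paper itself gives no proof, deferring to the cited construction of Garkusha--Panin, which proceeds the same way): the defining commutative square of a morphism in $\wt\Cor_k^\pair$ becomes, after applying the contravariant additive functor $\scrF$, a commutative square of abelian groups, so $\beta^*$ carries $\im(j_Y^*)$ into $\im(j_X^*)$ and descends to the quotients. Nothing further is needed.
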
 

\begin{definition}[\protect{\cite[Definition 2.3]{hty-inv}}]\label{def:hty-cat}
Define the homotopy category $\hwtCor_k$ of $\wt\Cor_k$ as follows. The objects of $\hwtCor_k$ are the same as those of $\wt\Cor_k$, and the morphisms are given by
\begin{align*}
\hwtCor_k(X,Y)&\defeq\wt\Cor_k(X,Y)/\sim_{\A^1}\\
&=\coker\p*{\wt\Cor_k(\A^1\times X,Y)\xrightarrow{i_0^*-i_1^*}\wt\Cor_k(X,Y)}.
\end{align*}
Similarly, let $\hwtCor_k^\pair$ denote the category whose objects are those of $\wt\Cor_k^\pair$, and whose morphisms are given by
\begin{align*}
&\hwtCor_k^\pair((X,U),(Y,V))\defeq\\
&\coker\p*{\wt\Cor_k^\pair(\A^1\times(X,U),(Y,V))\xrightarrow{i_0^*-i_1^*}\wt\Cor_k^\pair((X,U),(Y,V))}.
\end{align*}
Here $\A^1\times(X,U)$ is shorthand for $(\A^1\times X,\A^1\times U)$. If $\alpha\in\wt\Cor_k(X,Y)$ is a finite $\rmMW$-correspondence, we write $\ol\alpha$ for the image of $\alpha$ in $\hwtCor_k(X,Y)$. Similarly, if $(\alpha,\beta)$ is a morphism  in $\wt\Cor_k^\pair$ from $(X,U)$ to $(Y,V)$, write $\ol{(\alpha,\beta)}$ for the image of $\alpha$ in $\hwtCor_k^\pair((X,U),(Y,V))$. Note that a presheaf on $\wt\Cor_k$ is homotopy invariant if and only if it factors through $\hwtCor_k$. Moreover, the embedding $\wt\Cor_k\to\wt\Cor_k^\pair$ induces a fully faithful embedding $\hwtCor_k\to\hwtCor_k^\pair$.
\end{definition}

Next we record a few observations that will come in handy later on:

\begin{lemma}\label{lemma:conn-comp-sum}
Suppose that $\alpha$ is a finite $\rmMW$-correspondence from $X$ to $Y$. Let $T_1,\dots,T_n$ be the connected components of the support $T$ of $\alpha$.
Then, for each $i=1,\dots,n$ there are uniquely determined finite $\rmMW$-correspondences $\alpha_i$ supported on $T_i$ such that $\alpha=\sum_i\alpha_i$.
\end{lemma}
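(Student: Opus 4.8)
The plan is to exploit the definition of $\wt\Cor_k(X,Y)$ as a colimit of Chow–Witt groups with support, $\wt\Cor_k(X,Y)=\colim_{T\in\calA(X,Y)}\wt\CH_T^{d_Y}(X\times Y,\omega_Y)$, together with the standard additivity of cohomology with support over disjoint closed pieces. Concretely, a class $\alpha\in\wt\Cor_k(X,Y)$ is represented at some finite level by an element of $\wt\CH_T^{d_Y}(X\times Y,\omega_Y)$ for a closed set $T\in\calA(X,Y)$ which we may take to be the (chosen, minimal) support of $\alpha$. Write $T=T_1\sqcup\cdots\sqcup T_n$ as the (finite, disjoint) decomposition into connected components; each $T_i$ is a closed subset of $X\times Y$, still finite and surjective over $X$ on each irreducible component, hence each $T_i\in\calA(X,Y)$, and the $T_i$ are pairwise disjoint closed subsets of $T$.

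First I would recall that for a closed subset $T=\coprod_i T_i$ of a scheme $W$, with the $T_i$ pairwise disjoint closed in $T$ (equivalently, open and closed in $T$), Nisnevich cohomology with support decomposes as a direct sum: $H_T^\ast(W,\mathcal{G})\cong\bigoplus_i H_{T_i}^\ast(W,\mathcal{G})$ for any sheaf $\mathcal G$, and in particular for $\mathcal G=\K_{d_Y}^{MW}(\omega_Y)$ this gives $\wt\CH_T^{d_Y}(X\times Y,\omega_Y)\cong\bigoplus_{i=1}^n\wt\CH_{T_i}^{d_Y}(X\times Y,\omega_Y)$. This can be seen directly from the Rost–Schmid complex recalled in the excerpt: the complex $C^\bullet$ computing $H_T^\ast$ is the subcomplex of $C^\bullet(X\times Y,\K_{d_Y}^{MW},\omega_Y)$ supported on points of $T$, and since each codimension-$p$ point of $T$ lies on exactly one $T_i$, the term $C^p_T$ splits as $\bigoplus_i C^p_{T_i}$ compatibly with the differentials (the Rost–Schmid differential only connects a point to points in its closure, hence stays within one connected component of $T$). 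Taking cohomology yields the claimed splitting, with the inclusion $\wt\CH_{T_i}^{d_Y}\hookrightarrow\wt\CH_T^{d_Y}$ being the evident "extension of support" map.

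Then I would \emph{define} $\alpha_i\in\wt\CH_{T_i}^{d_Y}(X\times Y,\omega_Y)$ to be the $i$-th component of $\alpha$ under this direct-sum decomposition, and let $\alpha_i$ also denote its image in $\wt\Cor_k(X,Y)$ under the structure map of the colimit (legitimate since $T_i\subseteq T$). By construction $\sum_i\alpha_i=\alpha$ in $\wt\CH_T^{d_Y}$, hence in $\wt\Cor_k(X,Y)$. For \emph{uniqueness}: suppose $\alpha=\sum_i\alpha_i'$ with $\alpha_i'\in\wt\CH_{T_i}^{d_Y}(X\times Y,\omega_Y)$. The group $\wt\CH_{T_i}^{d_Y}$ is a subgroup of $\wt\CH_T^{d_Y}$ (the splitting above shows the extension-of-support maps are injective), so this is an equality in $\wt\CH_T^{d_Y}$; projecting onto the $i$-th summand gives $\alpha_i'=\alpha_i$. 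One small point to address is that a priori the $\alpha_i'$ might be represented on supports $T_i'\subseteq T_i$ rather than $T_i$ itself, but since $\wt\CH_{T_i'}^{d_Y}\to\wt\CH_{T_i}^{d_Y}$ is compatible with all the inclusions into $\wt\CH_T^{d_Y}$, this causes no trouble.

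The only genuinely delicate point is bookkeeping with the colimit: one must make sure that "the support of $\alpha$" is well defined and that passing to a connected component is compatible with the transition maps $\wt\CH_{T}^{d_Y}\to\wt\CH_{T'}^{d_Y}$ for $T\subseteq T'$ in $\calA(X,Y)$. This is handled exactly as for ordinary Suslin–Voevodsky correspondences: every class is supported on a \emph{unique} minimal $T$ (namely the union of those irreducible components of a representing cycle that carry a nonzero coefficient), and the decomposition into connected components commutes with the extension maps because these maps are the Rost–Schmid inclusions, which are visibly compatible with restricting attention to an open-and-closed piece of the support. I do not expect any real obstacle here — it is the same argument as \cite[Ch.~1]{MVW} adapted verbatim to Chow–Witt groups with support, the one new ingredient being the (entirely formal) additivity of Rost–Schmid cohomology over disjoint supports.
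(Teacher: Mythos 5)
Your proposal is correct and follows essentially the same route as the paper: both rest on the observation that the Rost--Schmid complex with support on $T=\coprod_i T_i$ splits compatibly with the differential (since the boundary of a class supported on $T_i$ can only be nonzero at points of $T_i$), the paper phrasing this at the level of the single cocycle $\alpha$ while you package it as a direct-sum decomposition of $\wt\CH_T^{d_Y}$, which also makes the uniqueness assertion immediate.
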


\begin{proof}
Since $\alpha\in\bigoplus_{x\in(X\times Y)^{(d_Y)}}\rmK^{\rmMW}_0(k(x),\wedge^{d_Y}(\frakm_{x}/\frakm_{x}^2)^\vee\otimes(\omega_{Y})_{x})$, we may write $\alpha=\sum_i\alpha_i$ where $\alpha_i$ is supported on $T_i$. To conclude we must show that $\alpha_i\in\wt\CH_{T_i}^{d_Y}(X\times Y,\omega_{Y})$, i.e., that $\partial(\alpha_i)=0$ for all $i$. Now $\partial_x(\alpha_i)=0$ for all $x\in X\times Y$ except perhaps for $x\in T_i$. But since $T_i$ is disjoint from the other $T_j$'s and $\partial(\alpha)=0$ by assumption, we must have $\partial_x(\alpha_i)=0$ also for $x\in T_i$.
\end{proof}

\begin{lemma}\label{lemma:iso}
Let $X$ be a smooth scheme, let $q\in\Z$ be an integer, and let $\scrL$ be a line bundle over $X$. Let $j\colon U\to X$ be a Zariski open subscheme, and suppose that $T\subseteq U$ is a subset which is closed in both $U$ and $X$.
Then the map 
\[
j^*\colon \rmH^p_T(X,\K^{\rmMW}_q,\scrL) \to \rmH^p_T(U,\K^{\rmMW}_q,j^*\scrL)
\] 
is an isomorphism for each $p\in\Z$, with inverse $j_*$, the finite pushforward of \cite[§3]{Calmes-Fasel}.
\end{lemma}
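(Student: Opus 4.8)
The plan is to compute both sides via the Rost--Schmid complex and observe that the relevant subcomplexes are literally identical. Recall that for a closed subset $T$ of a smooth scheme $X$ the cohomology with support $H^\bullet_T(X,\K^{MW}_q,\scrL)$ is the cohomology of the subcomplex
\[
C^\bullet_T(X,\K^{MW}_q,\scrL)\defeq\ker\p*{C^\bullet(X,\K^{MW}_q,\scrL)\to C^\bullet(X\setminus T,\K^{MW}_q,\scrL|_{X\setminus T})},
\]
the map being restriction along the open immersion $X\setminus T\hookrightarrow X$. Since the Rost--Schmid complex is a direct sum indexed by points and restriction to an open subscheme is the projection onto the summands indexed by points of that subscheme (codimension being preserved under open immersions), this subcomplex is explicitly
\[
C^p_T(X,\K^{MW}_q,\scrL)=\bigoplus_{x\in X^{(p)}\cap T}K^{MW}_{q-p}\p*{k(x),\wedge^p(\frakm_x/\frakm_x^2)^\vee\otimes_{k(x)}\scrL_x};
\]
that this subcomplex computes $H^\bullet_T$ follows from flabbiness of the Rost--Schmid resolution, which yields a short exact sequence of complexes $0\to C^\bullet_T(X)\to C^\bullet(X)\to C^\bullet(X\setminus T)\to0$.

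Next I would show that $j^*$ identifies $C^\bullet_T(X,\K^{MW}_q,\scrL)$ with $C^\bullet_T(U,\K^{MW}_q,j^*\scrL)$ on the nose. For the underlying graded groups: since $T\subseteq U$ and $j$ is an open immersion, every $x\in T$ lies in $U$ with $\calO_{U,x}=\calO_{X,x}$, so $x$ has the same codimension, residue field $k(x)$, conormal module $\frakm_x/\frakm_x^2$ and line-bundle stalk ($\scrL_x\cong(j^*\scrL)_x$) whether computed in $X$ or in $U$; hence $X^{(p)}\cap T=U^{(p)}\cap T$ and the summands agree. For the differentials: the Rost--Schmid boundary $\partial$ has components along specializations $y\in\ol{\{x\}}^{(1)}$, and for $x\in T$ any such $y$ again lies in $T$ because $T$ is closed in $X$; therefore $y\in U$, and the component $\partial^x_y$ depends only on the local geometry of $\ol{\{x\}}$ at $y$, which is the same in $X$ and in $U$ (the closure of $x$ in $U$ being an open subscheme of its closure in $X$). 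Thus the two subcomplexes coincide, and $j^*$ induces an isomorphism $H^p_T(X,\K^{MW}_q,\scrL)\xrightarrow{\cong}H^p_T(U,\K^{MW}_q,j^*\scrL)$ for every $p\in\Z$.

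Finally, the inverse is induced by extension by zero, i.e.\ by viewing a Rost--Schmid cycle on $U$ supported on $T$ as one on $X$ via the inclusion of summands; this is a well-defined chain map precisely because $T$ is closed in $X$, so that specializations of points of $T$ do not leave $T\subseteq U$. This is the map denoted $j_*$ in the statement. The only genuine subtlety --- and the only place where both closedness hypotheses are used --- is verifying that no summand or differential component is created or destroyed when passing between $X$ and $U$; everything else is bookkeeping with the Rost--Schmid complex.
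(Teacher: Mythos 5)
Your proof is correct. The paper disposes of the isomorphism in one line by citing the \'etale excision lemma of Calm\`es--Fasel (\cite[Lemma 3.7]{Calmes-Fasel}) and then only uses the Rost--Schmid complex to observe that $j^*j_*$ is the identity on $C^\bullet_T(U,\K_q^{MW},j^*\scrL)$; you instead prove the Zariski case of excision from scratch by exhibiting $C^\bullet_T(X,\K_q^{MW},\scrL)$ and $C^\bullet_T(U,\K_q^{MW},j^*\scrL)$ as literally the same subcomplex. The mathematical content is the same --- both arguments ultimately rest on the fact that the Rost--Schmid complex with supports in $T$ only sees points of $T$ and their specializations, which stay in $T\subseteq U$ by closedness in $X$ --- but your version is self-contained where the paper's is a citation, and you correctly isolate the two places where the hypotheses are used: codimension, local rings, and conormal data of a point of $T$ are insensitive to passing from $X$ to the open subscheme $U$, and the differential does not leak out of $T$ because $T$ is closed in $X$. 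The one thing worth making explicit if you write this up is that the boundary component $\partial^x_y$ is defined via the local ring of $\ol{\{x\}}$ at $y$ (after normalization/transfer), so its agreement on $X$ and $U$ is indeed a purely local statement, as you assert.
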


\begin{proof}
The map $j^*$ is an isomorphism by étale excision \cite[Lemma 3.7]{Calmes-Fasel}. The composition $j^*j_*$ is the identity map on the Rost--Schmid complex $C^*_T(U,\K_q^{\rmMW},j^*\scrL)$ supported on $T$, which implies the claim.
\end{proof}

\begin{corollary}\label{cor:supp-open}
Let $X,Y\in\Sm_k$, and let $j\colon V\to Y$ be a Zariski open subscheme. Suppose that $\alpha\in\wt\Cor_k(X,Y)$ is a finite $\rmMW$-correspondence such that $\supp\alpha\subseteq X\times V$. Then there is a unique finite $\rmMW$-correspondence $\beta\in\wt\Cor_k(X,V)$ such that $j\circ\beta=\alpha$. In fact, we have $\beta=(1\times j)^*\alpha$. 
\end{corollary}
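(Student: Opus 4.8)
The plan is to produce $\beta$ directly from \Cref{lemma:iso}, applied to the open immersion $\id_X\times j\colon X\times V\to X\times Y$. First I would set $T\defeq\supp\alpha$, which belongs to $\calA(X,Y)$ — the support of a finite $MW$-correspondence being, as in the classical case, a union of integral subschemes finite and surjective over $X$; in particular $\alpha\in\wt\CH^{d_Y}_T(X\times Y,\omega_Y)=H^{d_Y}_T(X\times Y,\K^{MW}_{d_Y},\omega_Y)$. By hypothesis $T\subseteq X\times V$, and since $T$ is closed in $X\times Y$ it is also closed in the open subscheme $X\times V$; in particular $T\in\calA(X,V)$.

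Next I would apply \Cref{lemma:iso} with the smooth scheme $X\times Y$, the line bundle $\omega_Y=p_Y^*\omega_{Y/k}$, the integer $q=d_Y$, the open immersion $\id_X\times j$, and the closed subset $T$. This yields an isomorphism
\[
(\id_X\times j)^*\colon H^{d_Y}_T(X\times Y,\K^{MW}_{d_Y},\omega_Y)\xrightarrow{\cong}H^{d_Y}_T\bigl(X\times V,\K^{MW}_{d_Y},(\id_X\times j)^*\omega_Y\bigr),
\]
with inverse $(\id_X\times j)_*$. Since $p_Y\circ(\id_X\times j)=j\circ p_V$, where $p_V\colon X\times V\to V$ is the projection, and since $j$ is an open immersion, one has the canonical identification $(\id_X\times j)^*\omega_Y=p_V^*j^*\omega_{Y/k}=p_V^*\omega_{V/k}=\omega_V$; under it the target becomes $\wt\CH^{d_Y}_T(X\times V,\omega_V)$, a subgroup of $\wt\Cor_k(X,V)$ because $T\in\calA(X,V)$. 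I would then define $\beta\defeq(\id_X\times j)^*\alpha\in\wt\Cor_k(X,V)$.

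It remains to check that $j\circ\beta=\alpha$ and that $\beta$ is the only correspondence with this property. For the equality, I would use that composition in $\wt\Cor_k$ with the morphism $j\colon V\to Y$ is computed by pushforward along $\id_X\times j$; this pushforward is defined on classes supported on $T$ precisely because $T$ is finite, hence proper, over $X$, so that $T$ is closed in $X\times Y$ — exactly the hypothesis of \Cref{lemma:iso}. Hence $j\circ\beta=(\id_X\times j)_*\beta=(\id_X\times j)_*(\id_X\times j)^*\alpha=\alpha$ by \Cref{lemma:iso}. For uniqueness, suppose $\beta'\in\wt\Cor_k(X,V)$ also satisfies $j\circ\beta'=\alpha$. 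Its support $T'\defeq\supp\beta'$ is closed in $X\times V$ and, being finite (hence proper) over $X$, is closed in $X\times Y$ as well, while $\alpha=(\id_X\times j)_*\beta'$ is supported on $T'$. Applying \Cref{lemma:iso} with the closed subset $T'$ then gives $\beta'=(\id_X\times j)^*(\id_X\times j)_*\beta'=(\id_X\times j)^*\alpha=\beta$.

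The main obstacle is the identification of composition with $j$ as the pushforward $(\id_X\times j)_*$: one must unwind the definition of composition of $MW$-correspondences from \cite{Calmes-Fasel}, keep track of the twisting line bundles, and observe that, although $\id_X\times j$ is not proper, the cycles in question are supported on subsets that are finite — in particular proper — over $X$, so the relevant pushforward is defined and agrees with the map appearing in \Cref{lemma:iso}. Once this is in place, the remainder is a routine bookkeeping check against \Cref{lemma:iso}.
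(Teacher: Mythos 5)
Your proposal is correct and follows essentially the same route as the paper: define $\beta\defeq(1\times j)^*\alpha$ via the mutually inverse isomorphisms of \Cref{lemma:iso} applied to $T=\supp\alpha$, and identify $(1\times j)_*\beta$ with $j\circ\beta$ (the paper cites \cite[Example 4.18]{Calmes-Fasel} for this last point). The only addition is your explicit uniqueness argument, which the paper leaves implicit but which is sound, since the support of any such $\beta'$ is finite over $X$ and hence closed in $X\times Y$, so \Cref{lemma:iso} applies to it as well.
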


\begin{proof}
Let $T\defeq\supp\alpha$, so that by \Cref{lemma:iso} we have mutually inverse isomorphisms
\[
(1\times j)^*\colon\wt\CH^{d_Y}_{T}(X\times Y,\omega_Y)\rightleftarrows \wt\CH_{T}^{d_Y}(X\times V,\omega_V)\cocolon(1\times j)_*
\]
with $\alpha\in\wt\CH^{d_Y}_T(X\times Y,\omega_Y)$. Thus, if $\beta\defeq(1\times j)^*(\alpha)$ then $(1\times j)_*\beta=\alpha$. We conclude the equality $(1\times j)_*\beta=j\circ\beta$ from \cite[Example 4.18]{Calmes-Fasel}.\end{proof}

\begin{lemma}\label{lemma:making-pairs}
Suppose that $j_X\colon U\to X$ and $j_Y\colon V\to Y$ are open subschemes of smooth connected $k$-schemes $X$, $Y$. Assume further that $\alpha\in\wt\Cor_k(X,Y)$ is a finite $\rmMW$-correspondence such that the support $T\defeq \supp\alpha$ satisfies $T\cap(U\times Y)\subseteq U\times V$. Let $\alpha'\defeq(j_X\times j_Y)^*(\alpha)$. Then we have $(\alpha, \alpha')\in\wt\Cor_k^\pair((X,U),(Y,V))$.
\end{lemma}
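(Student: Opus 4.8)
The plan is to unwind the definition of $\wt\Cor_k^\pair$ and reduce everything to the factorization statement already packaged in \Cref{cor:supp-open}. By definition, asserting $(\alpha,\alpha')\in\wt\Cor_k^\pair((X,U),(Y,V))$ amounts to checking two things: that $\alpha'$ is a genuine element of $\wt\Cor_k(U,V)$, and that $j_X^*(\alpha)=(j_Y)_*(\alpha')$ in $\wt\Cor_k(U,Y)$. Here $j_X^*$ is precomposition with $\wt\gamma_{j_X}$ and $(j_Y)_*$ is postcomposition with $\wt\gamma_{j_Y}$, so $(j_Y)_*(\alpha')=j_Y\circ\alpha'$ holds by definition.

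First I would observe that, $j_X$ being an open immersion, the map $j_X^*\colon\wt\Cor_k(X,Y)\to\wt\Cor_k(U,Y)$ is computed on cycles by flat pullback (restriction) along $j_X\times\id_Y\colon U\times Y\to X\times Y$, cf.\ \cite[Example 4.18]{Calmes-Fasel}. In particular $\supp\big(j_X^*(\alpha)\big)=T\cap(U\times Y)$, which by hypothesis is contained in $U\times V$. I may therefore apply \Cref{cor:supp-open}, with $(U,Y)$ in place of $(X,Y)$, the open immersion $j_Y\colon V\to Y$, and the correspondence $j_X^*(\alpha)$: there is a unique $\beta\in\wt\Cor_k(U,V)$ with $j_Y\circ\beta=j_X^*(\alpha)$, and it is given by $\beta=(\id_U\times j_Y)^*\big(j_X^*(\alpha)\big)$.

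It then only remains to identify $\beta$ with $\alpha'$. By contravariant functoriality of the flat pullback,
\[
\beta=(\id_U\times j_Y)^*(j_X\times\id_Y)^*(\alpha)=\big((j_X\times\id_Y)\circ(\id_U\times j_Y)\big)^*(\alpha)=(j_X\times j_Y)^*(\alpha)=\alpha',
\]
which shows at once that $\alpha'\in\wt\Cor_k(U,V)$ and that $j_Y\circ\alpha'=j_X^*(\alpha)$, i.e.\ $(j_X^*-(j_Y)_*)(\alpha,\alpha')=0$. Hence $(\alpha,\alpha')\in\wt\Cor_k^\pair((X,U),(Y,V))$. The lemma is essentially formal; the only point demanding any care is to keep straight the bookkeeping between the precomposition/postcomposition maps appearing in the definition of $\wt\Cor_k^\pair$ and the flat pullbacks used to build $\alpha'$, as all the geometric content — that a correspondence supported inside $U\times V$ factors uniquely through $V$ — was already extracted in \Cref{cor:supp-open}.
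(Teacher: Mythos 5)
Your proof is correct and follows essentially the same route as the paper's: both arguments rest on the contravariant functoriality $(j_X\times j_Y)^*=(\id_U\times j_Y)^*(j_X\times\id_Y)^*$, the identification of $(j_X\times\id_Y)^*(\alpha)$ with $\alpha\circ j_X$, and the étale-excision factorization of a correspondence supported in $U\times V$ through $V$. The only difference is organizational — you invoke \Cref{cor:supp-open} as a black box where the paper inlines that factorization (and explicitly checks the twist $(1\times j_Y)^*\omega_Y\cong\omega_V$), which is a perfectly legitimate shortcut.
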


\begin{proof}
First we show that $\alpha'\in\wt\Cor_k(U,V)$. By contravariant functoriality of Chow--Witt groups we may write $\alpha'=(1\times j_Y)^*(j_X\times 1)^*(\alpha)$. Now $(j_X\times 1)^*(\alpha)=\alpha\circ j_X\in\wt\Cor_k(U,Y)$ by \cite[Example 4.17]{Calmes-Fasel}. By \cite[Lemmas 4.8, 4.10]{Calmes-Fasel}, $\supp(j_X\times 1)^*(\alpha)=T\cap(U\times Y)$ is finite and surjective over $U$. Since $T\cap(U\times Y)\subseteq U\times V$, we have
\[
\alpha'\in\wt\CH_{T\cap(U\times Y)}^{d_Y}(U\times V,(1\times j_Y)^*\omega_{Y}),
\]
where $d_Y\defeq\dim Y$. As $j_Y$ is an open embedding we have $(1\times j_Y)^*\omega_Y\cong\omega_V$; hence $\alpha'$ is a finite $\rmMW$-correspondence from $U$ to $V$.

Next we show that the diagram
\[\begin{tikzcd}
X\ar{r}{\alpha} & Y\\
U\ar{u}{j_X}\ar{r}{\alpha'} & V\ar{u}[swap]{j_Y}
\end{tikzcd}\]
commutes in $\wt\Cor_k$. As $T\cap(U\times Y)=T\cap(U\times V)$, the morphism $(j_X\times 1)^*$ factors as
\[\begin{tikzcd}
\wt\CH_T^{d_Y}(X\times Y,\omega_{Y})\ar{r}{(j_X\times j_Y)^*}\ar{dr}[swap]{(j_X\times1)^*} & \wt\CH_{T\cap(U\times V)}^{d_Y}(U\times V,\omega_{V})\ar{d}{(1\times j_Y)_*}\\
& \wt\CH_{T\cap(U\times Y)}^{d_Y}(U\times Y,\omega_{Y}).
\end{tikzcd}\]
Hence
\[
j_Y\circ\alpha'=(1\times j_Y)_*(j_X\times j_Y)^*(\alpha)=(j_X\times1)^*(\alpha)=\alpha\circ j_X
\]
by \cite[Examples 4.17, 4.18]{Calmes-Fasel}.
\end{proof}

\subsection*{Relative Milnor--Witt correspondences}

For later reference, let us also briefly mention the notion of finite MW-correpondences relative to a base scheme $S\in\Sm_k$.  

\begin{definition}\label{def:rel-wt-Cor}
Let $S\in\Sm_k$ be a smooth $k$-scheme. For any $X,Y\in\Sm_S$, let $p\colon X\times_S Y\to X$ denote the projection, and let $d$ denote the relative dimension of $p$. We define the group of \emph{finite relative $\rmMW$-correspondences} from $X$ to $Y$ as
\[
\wt\Cor_S(X,Y)\defeq\varinjlim_T\wt\CH^d_T(X\times_SY,\omega_p),
\]
where the colimit runs over all closed subsets $T$ of $X\times_SY$ such that each irreducible component of $T_\red$ is finite and surjective over $X$.
\end{definition}

One can show that the groups $\wt\Cor_S(X,Y)$ define the mapping sets of a category $\wt\Cor_S$ of \emph{finite relative $\rmMW$-correspondences}. However, below we will only need the definition of the groups $\wt\Cor_S(X,Y)$, and so we will not pursue the study of the category $\wt\Cor_S$ in further detail here.

\begin{lemma}\label{lemma:rel-MW-to-MW}
Let $S\in\Sm_k$ be a smooth $k$-scheme, and let $X,Y\in\Sm_S$. Then the canonical morphism $f\colon X\times_S Y\to X\times Y$ induces a homomorphism
\[
f_*\colon\wt\Cor_S(X,Y)\to\wt\Cor_k(X,Y)
\]
given as the pushforward on Chow--Witt groups.
\end{lemma}

\begin{proof}
Let $d_Y\defeq\dim Y$ and $d_S\defeq\dim S$. Then the projection $p\colon X\times_SY\to X$ has relative dimension $d_Y-d_S$, and the pushforward map on Chow--Witt groups is given as
\[
f_*\colon\wt\CH_T^{d_Y-d_S}(X\times_SY,\omega_p)\to\wt\CH^{d_Y}_{f(T)}(X\times Y,\omega_Y),
\]
for any admissible subset $T$. Since $f$ is finite, $f(T)$ is also an admissible subset. Hence, composing with the canonical map to the colimit $\wt\Cor_k(X,Y)$ on the right hand side, we obtain the desired homomorphism.
\end{proof}

\section{Milnor--Witt correspondences from Cartier divisors}\label{section:cartier}
Let us recall from \cite[§2]{MW-cancel} how a Cartier divisor gives rise to a finite $\rmMW$-correspondence. Suppose that $X\in\Sm_k$ is a smooth integral $k$-scheme, and let $D=\{(U_i,f_i)\}$ be a Cartier divisor on $X$, with support $|D|$.
We can associate a cohomology class 
\[
\wt\Div(D)\in \rmH^1_{|D|}(X,\K_1^{\rmMW},\calO_X(D))=\wt\CH^1_{|D|}(X,\calO_X(D))
\]
to $D$ as follows. If $x\in X^{(1)}$ is a codimension $1$-point on $X$, choose $i$ such that $x\in U_i$. Consider the element
\[
[f_i]\otimes f_i^{-1}\in \rmK_1^{\rmMW}(k(X),\calO_X(D)\otimes k(X)).
\]

\begin{definition}[\protect{\cite[Definition 2.1.1]{MW-cancel}}]
In the above setting, define
\[
\wt\ord_x(D)\defeq\partial_x([f_i]\otimes f_i^{-1})\in \rmK^{\rmMW}_0(k(x),(\frakm_x/\frakm_x^2)^\vee\otimes_{k(x)}\calO_X(D)_x),
\]
and 
\[
\wt\ord(D)\defeq\sum_{x\in X^{(1)}\cap|D|}\wt\ord_x(D)\in C^1(X,\K_1^{\rmMW},\calO_X(D)).
\]
\end{definition}

By \cite[Lemma 2.1.2]{MW-cancel}, the definition of $\wt\ord_x(D)$ does not depend on the choice of $U_i$, and by \cite[Lemma 2.1.3]{MW-cancel} we have $\partial(\wt\ord(D))=0$. Therefore the element $\wt\ord(D)$ defines a cohomology class in $\wt\CH^1_{|D|}(X,\calO_X(D))$, which we denote by $\wt\Div(D)$.

\begin{lemma}\label{lemma:cartier-sum}
Let $X\in\Sm_k$ be a smooth integral $k$-scheme and suppose that $D$ and $D'$ are two Cartier divisors on $X$ such that
\begin{itemize}
\item the supports of $D$ and $D'$ are disjoint, and 
\item there are trivializations $\chi\colon\calO_X\xrightarrow{\cong}\calO(D)$ and $\chi'\colon\calO_X\xrightarrow{\cong}\calO(D')$.
\end{itemize}
Then $\chi$ and $\chi'$ induce an isomorphism 
\[
\wt\CH^1_{|D+D'|}(X,\calO(D+D'))\cong\wt\CH^1_{|D|}(X,\calO(D))\oplus\wt\CH^1_{|D'|}(X,\calO(D')).
\] 
Under this isomorphism we have the identification $\wt\Div(D+D')=\wt\Div(D)+\wt\Div(D')$.
\end{lemma}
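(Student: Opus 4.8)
The plan is to reduce the statement to a decomposition of the Rost–Schmid complex, exactly in the spirit of \Cref{lemma:conn-comp-sum}. First I would fix trivializations and rephrase everything in terms of the cycle $\wt\ord$. Since $|D|$ and $|D'|$ are disjoint, every codimension-$1$ point $x$ lies in at most one of $|D|$, $|D'|$. Therefore the group of Rost–Schmid $1$-cochains supported on $|D+D'|=|D|\sqcup|D'|$ splits as a direct sum of the cochains supported on $|D|$ and those supported on $|D'|$; the trivializations $\chi,\chi'$ (and the resulting trivialization of $\calO(D+D')\cong\calO(D)\otimes\calO(D')$) identify the three twisted sheaves $\K_1^{MW}(\calO(D+D'))$, $\K_1^{MW}(\calO(D))$, $\K_1^{MW}(\calO(D'))$ compatibly. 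Passing to the kernel of $\partial$, and arguing as in \Cref{lemma:conn-comp-sum} that a cycle supported on $|D|\sqcup|D'|$ splits into a cycle supported on $|D|$ and one supported on $|D'|$ (using that $\partial$ respects the support decomposition because the two supports are disjoint), gives the asserted isomorphism
\[
\wt\CH^1_{|D+D'|}(X,\calO(D+D'))\cong\wt\CH^1_{|D|}(X,\calO(D))\oplus\wt\CH^1_{|D'|}(X,\calO(D')).
\]

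Next I would verify the identification $\wt\Div(D+D')=\wt\Div(D)+\wt\Div(D')$ under this isomorphism, by computing $\wt\ord_x(D+D')$ pointwise. If $D=\{(U_i,f_i)\}$ and $D'=\{(U_j',g_j)\}$, then on an overlap $U_i\cap U_j'$ the Cartier divisor $D+D'$ is given by the local equation $f_ig_j$, so $\wt\ord_x(D+D')=\partial_x([f_ig_j]\otimes (f_ig_j)^{-1})$. Now for a codimension-$1$ point $x$: if $x\in|D|$ then $g_j$ is a unit at $x$, so modulo the image of $\partial$ on the relevant local ring the class $[f_ig_j]\otimes(f_ig_j)^{-1}$ differs from $[f_i]\otimes f_i^{-1}$ by terms involving the unit $g_j$ whose boundary vanishes — here I would invoke the twisted $\eta$-logarithmic relation $[f_ig_j]=[f_i]+[g_j]+\eta[f_i][g_j]$ and the fact that $\partial_x$ kills classes coming from elements that are units at $x$ (a standard residue computation on the Rost–Schmid/Milnor–Witt boundary). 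Thus $\wt\ord_x(D+D')=\wt\ord_x(D)$ for $x\in|D|$, symmetrically $=\wt\ord_x(D')$ for $x\in|D'|$, and $=0$ otherwise. Summing over $x$ gives $\wt\ord(D+D')=\wt\ord(D)+\wt\ord(D')$ as cochains, hence the same identity for the induced cohomology classes $\wt\Div$.

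The main obstacle is the pointwise boundary computation: making precise that, at a point $x\in|D|$, the residue $\partial_x$ of $[f_ig_j]\otimes(f_ig_j)^{-1}$ agrees with that of $[f_i]\otimes f_i^{-1}$. This requires carefully tracking the twist by $\calO(D+D')$ versus $\calO(D)$ (which is where the trivializations $\chi,\chi'$ enter, so that the comparison is literally an equality and not merely an equality up to a unit) and using the explicit formula for $\partial_x$ on $K_1^{MW}(k(X))$ in terms of the valuation at $x$, together with the Milnor–Witt relations to discard the contribution of the unit $g_j$. Everything else is bookkeeping with the direct-sum decomposition of the Rost–Schmid complex, and is entirely analogous to the proof of \Cref{lemma:conn-comp-sum}.
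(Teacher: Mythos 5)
Your proposal is correct and follows essentially the same route as the paper: split the Rost--Schmid complex along $|D+D'|=|D|\amalg|D'|$ using the trivializations, then verify $\wt\ord_x(D+D')=\wt\ord_x(D)$ pointwise via the twisted $\eta$-logarithmic relation and the vanishing of $\partial_x$ on units. The ``main obstacle'' you flag is resolved in the paper exactly as you anticipate: writing $[f_if_i']=[f_i']+\ip{f_i'}[f_i]$, the unit $\ip{f_i'}$ from the relation cancels against the unit $\ip{(f_i')^{-1}}$ coming from the twist $(f_if_i')^{-1}$ versus $f_i^{-1}$.
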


\begin{proof}
Since $\calO(D+D')\cong\calO(D)\otimes\calO(D')$, $\chi$ and $\chi'$ furnish a trivialization 
\[\chi\otimes\chi'\colon\calO(D+D')\cong\calO_X.
\]
As $|D+D'|=|D|\amalg|D'|$, we thus obtain isomorphisms
\begin{align*}
\wt\CH^1_{|D+D'|}(X,\calO(D+D'))&\cong\wt\CH^1_{|D|}(X,\calO(D+D'))\oplus\wt\CH^1_{|D'|}(X,\calO(D+D'))\\
&\cong\wt\CH^1_{|D|}(X)\oplus\wt\CH^1_{|D'|}(X)\\
&\cong\wt\CH^1_{|D|}(X,\calO(D))\oplus\wt\CH^1_{|D'|}(X,\calO(D')).
\end{align*}

To show the last claim, let $D$ and $D'$ be given by the data $\{(U_i,f_i)\}$ respectively $\{(U_i,f_i')\}$, so that $D+D'=\{(U_i,f_if_i')\}$. Let $x\in X^{(1)}\cap|D|$, and choose an $i$ such that $x\in U_i$. Since the vanishing loci of $f_i$ and $f_i'$ are disjoint we may assume that $f_i'\in\Gamma(U_i,\calO_X^\times)$, shrinking $U_i$ if necessary. Hence $\partial_x([f_i'])=0$, and we obtain
\begin{align*}
\partial_x([f_if_i']\otimes (f_if_i')^{-1}) &= \partial_x(([f_i']+\ip{f_i'}[f_i])\otimes (f_if_i')^{-1})\\
&=\ip{f_i'}\ip{(f_i')^{-1}}\partial_x([f_i]\otimes f_i^{-1})\\
&=\partial_x([f_i]\otimes f_i^{-1}).
\end{align*}
Thus $\partial_x([f_if_i']\otimes(f_if_i')^{-1})=\wt\ord_x(D)$. 
A similar argument shows that 
\[
\partial_x([f_if_i']\otimes(f_if_i')^{-1})=\wt\ord_x(D')
\] 
for all $x\in X^{(1)}\cap|D'|$, and the result follows.
\end{proof}

If we require a condition on the line bundle $\calO(D)$ and on the support of $D$, the class $\wt\Div(D)$ does indeed give rise to a finite $\rmMW$-correspondence:

\begin{lemma}\label{lemma:cartier}
Let $X$ and $Y$ be smooth connected $k$-schemes with $\dim Y=1$. Let $D$ be a Cartier divisor on $X\times Y$. Suppose that
\begin{itemize}
\item there is an isomorphism $\chi\colon\calO_{X\times Y}(D)\xrightarrow{\cong} \omega_Y$, and
\item each irreducible component of the support $|D|$ of $D$ is finite and surjective over $X$. 
\end{itemize}
Then the image of $\wt\Div(D)$ under the isomorphism 
\[\wt\CH^1_{|D|}(X\times Y, \calO_{X\times Y}(D))\xrightarrow{\cong}\wt\CH^1_{|D|}(X\times Y,\omega_Y)\] 
induced by $\chi$ defines a finite $\rmMW$-correspondence $\wt\Div(D,\chi)\in\wt\Cor_k(X,Y)$.
\end{lemma}

\begin{proof}
By assumption, $|D|$ is an admissible subset, hence the claim follows.
\end{proof}

\begin{lemma}\label{lemma:cartier-functoriality}
Assume the hypotheses of \Cref{lemma:cartier}, and let $f\colon X'\to X$ be a morphism of smooth $k$-schemes.
Then
\[
\wt\Div(D,\chi)\circ f=\wt\Div((f\times 1)^*D,(f\times1)^*\chi)\in\wt\Cor_k(X',Y).
\]
\end{lemma}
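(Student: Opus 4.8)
The plan is to reduce the statement to a compatibility of the Rost--Schmid boundary maps with flat pullback, together with the description of composition with a morphism from \Cref{lemma:cartier-functoriality}'s hypotheses (which come from \Cref{lemma:cartier}). First I would record that both sides of the claimed equality live in $\wt\Cor_k(X',Y) = \wt\CH^1_{|D'|}(X'\times Y,\omega_Y)$ for an appropriate admissible support, so it suffices to check that the underlying Rost--Schmid cycles agree. Here one must first observe that $(f\times 1)^*D$ is a well-defined Cartier divisor: since $|D|$ is finite and surjective over $X$, the support $|(f\times 1)^*D| = (f\times 1)^{-1}(|D|)$ is again finite and surjective over $X'$, and the local equations $f_i$ pull back to local equations $f_i\circ(f\times 1)$; moreover the pullback isomorphism $(f\times 1)^*\chi\colon\calO_{X'\times Y}((f\times 1)^*D)\cong (f\times 1)^*\omega_Y\cong\omega_Y$ is legitimate because $Y$ has dimension $1$ and $f\times 1$ is compatible with the projections to $Y$, so that $(f\times 1)^*\omega_Y$ is canonically $\omega_Y$ on $X'\times Y$. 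Thus the right-hand side $\wt\Div((f\times1)^*D,(f\times1)^*\chi)$ is defined.

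Next I would unwind the left-hand side. By \cite[Example 4.17]{Calmes-Fasel} (as already used in the proof of \Cref{lemma:making-pairs}), precomposition with the morphism $f$ in $\wt\Cor_k$ is computed by the pullback $(f\times 1)^*$ on Chow--Witt groups with support, i.e.\ $\wt\Div(D,\chi)\circ f = (f\times 1)^*\wt\Div(D,\chi)$, where $(f\times 1)^*$ is the flat (or lci) pullback on the relevant Rost--Schmid complexes twisted appropriately. Since $\chi$ is an isomorphism of line bundles and pullback is functorial and compatible with twisting isomorphisms, $(f\times 1)^*$ intertwines the isomorphism induced by $\chi$ with that induced by $(f\times1)^*\chi$; hence it is enough to prove the untwisted identity
\[
(f\times 1)^*\wt\Div(D) = \wt\Div((f\times 1)^*D)
\]
in $\wt\CH^1_{|(f\times1)^*D|}(X'\times Y,\calO((f\times1)^*D))$.

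The core of the argument is then local on codimension-$1$ points. Writing $g\defeq f\times 1$, for a point $x'\in (X'\times Y)^{(1)}$ lying over $x\in (X\times Y)^{(1)}$ with $x'\in g^{-1}(U_i)$, one has by definition $\wt\ord_{x'}(g^*D) = \partial_{x'}([f_i\circ g]\otimes (f_i\circ g)^{-1})$, while $(g^*\wt\Div(D))$ at $x'$ is obtained by applying the pullback on Rost--Schmid complexes to $\partial_x([f_i]\otimes f_i^{-1})$. The claim thus comes down to the commutativity of the square relating $\partial$ and $g^*$ — that is, the statement that the Rost--Schmid differential commutes with flat pullback — applied to the class $[f_i]\otimes f_i^{-1}$, noting that $g^*([f_i]\otimes f_i^{-1}) = [f_i\circ g]\otimes (f_i\circ g)^{-1}$ since pullback on $\K_1^{MW}$ sends the symbol $[f_i]$ to $[g^\#f_i]$ and is compatible with the twist by $f_i^{-1}$. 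This compatibility of residues with flat pullback is a standard property of the Rost--Schmid (Gersten) complex, recorded in \cite[§5]{Morel-over-a-field} (and in the Chow--Witt formalism of \cite{Calmes-Fasel}); I would cite it rather than reprove it. Summing over all $x'\in X'^{(1)}\cap|g^*D|$ and invoking \cite[Example 4.17]{Calmes-Fasel} to identify the pullback with composition by $f$ then yields the result.

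The main obstacle I anticipate is bookkeeping with the twisting line bundles and the canonical identifications $g^*\omega_Y\cong\omega_Y$, $\calO(g^*D)\cong g^*\calO(D)$: one has to make sure these are chosen compatibly so that the residue-versus-pullback square literally commutes on the nose (not merely up to a sign or a unit). Once the twists are pinned down, the computation at each codimension-$1$ point is immediate from functoriality of the symbol $[\,\cdot\,]$ and the naturality of $\partial$ under flat pullback, so the real work is organizational rather than conceptual.
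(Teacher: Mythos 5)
Your proposal is correct and follows essentially the same route as the paper: identify $\wt\Div(D,\chi)\circ f$ with $(f\times 1)^*\wt\Div(D,\chi)$ via \cite[Example 4.17]{Calmes-Fasel}, then invoke the compatibility of $(f\times1)^*$ with the Rost--Schmid boundary $\partial$. The paper's proof is just a two-line version of your argument; your extra bookkeeping on the twists and local equations fills in details the paper leaves implicit.
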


\begin{proof}
As $\wt\Div(D,\chi)\circ f=(f\times 1)^*\wt\Div(D,\chi)$, the claim follows from the fact that $(f\times1)^*$ commutes with the boundary map $\partial$ in the Rost--Schmid complex.
\end{proof}

For later reference, let us also state the version of \Cref{cor:supp-open} for Cartier-divisors:

\begin{lemma}\label{lemma:cartier-open}
Assume the hypotheses of \Cref{lemma:cartier}. Suppose moreover that $j\colon V\to Y$ is a Zariski open subscheme of $Y$ such that support $|D|$ is contained in $X\times V$. Then there exists a unique finite $\rmMW$-correspondence $\beta\in\wt\Cor_k(X,V)$ such that $j\circ\beta=\wt\Div(D,\chi)$. In fact, $\beta$ is given by
\[
\beta=\wt\Div((1\times j)^*D,(1\times j)^*\chi).
\]
\end{lemma}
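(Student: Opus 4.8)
The plan is to deduce this directly from \Cref{cor:supp-open} applied to the finite $MW$-correspondence $\alpha\defeq\wt\Div(D,\chi)$, and then to identify the resulting $\beta$ with the pulled-back divisor class.

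First I would observe that by \Cref{lemma:cartier} we have $\alpha=\wt\Div(D,\chi)\in\wt\Cor_k(X,Y)$ with $\supp\alpha=|D|$, and the hypothesis $|D|\subseteq X\times V$ says precisely that $\supp\alpha\subseteq X\times V$. \Cref{cor:supp-open} then produces a unique $\beta\in\wt\Cor_k(X,V)$ with $j\circ\beta=\alpha$, given by $\beta=(1\times j)^*\alpha$, where $(1\times j)^*$ is the étale-excision isomorphism of \Cref{lemma:iso} applied to the open immersion $1\times j\colon X\times V\to X\times Y$ and the closed set $|D|$. This already gives existence and uniqueness, so the content of the lemma is the explicit formula $\beta=\wt\Div((1\times j)^*D,(1\times j)^*\chi)$.

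Next I would check that the right-hand side makes sense. The restriction $(1\times j)^*D$ is a Cartier divisor on $X\times V$ whose support is $|D|\cap(X\times V)=|D|$, which is finite and surjective over $X$; and $(1\times j)^*\chi$ is an isomorphism $\calO_{X\times V}((1\times j)^*D)\xrightarrow{\cong}(1\times j)^*\omega_Y\cong\omega_V$, the last isomorphism holding because $j$ is étale (as in the proof of \Cref{lemma:making-pairs}). Hence \Cref{lemma:cartier} applies and $\wt\Div((1\times j)^*D,(1\times j)^*\chi)\in\wt\Cor_k(X,V)$. It then remains to identify the two classes. Unwinding \Cref{lemma:iso}, the isomorphism $(1\times j)^*\colon\wt\CH^1_{|D|}(X\times Y,\omega_Y)\to\wt\CH^1_{|D|}(X\times V,\omega_V)$ is the one induced by the pullback of Rost--Schmid complexes along the open immersion $1\times j$. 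For a codimension-$1$ point $x\in X\times V$ lying in some chart $U_i$ of $D$, a local equation of $(1\times j)^*D$ at $x$ is the restriction of $f_i$, so the generators $[f_i]\otimes f_i^{-1}$ agree under pullback; since pullback commutes with the boundary $\partial$, exactly as in \Cref{lemma:cartier-functoriality}, the classes $\wt\ord_x$ match, and summing over $x$ gives $(1\times j)^*\wt\ord(D)=\wt\ord((1\times j)^*D)$, i.e.\ $(1\times j)^*\wt\Div(D)=\wt\Div((1\times j)^*D)$. Transporting along the trivializations $\chi$ and $(1\times j)^*\chi$, which are compatible by construction, then yields $(1\times j)^*\wt\Div(D,\chi)=\wt\Div((1\times j)^*D,(1\times j)^*\chi)$, as desired.

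I expect the only delicate point to be bookkeeping: verifying that the excision isomorphism $(1\times j)^*$ of \Cref{lemma:iso} coincides with the naive flat pullback on the Rost--Schmid complex when evaluated on the class $\wt\Div(D)$, and carefully keeping track of the line-bundle twists along the way. There is no genuine geometric obstacle here — the argument is essentially the same two observations used in \Cref{cor:supp-open} and \Cref{lemma:cartier-functoriality}, recombined.
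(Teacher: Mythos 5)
Your proposal is correct and follows the paper's own argument: apply \Cref{cor:supp-open} to $\alpha=\wt\Div(D,\chi)$ for existence and uniqueness, then identify $(1\times j)^*\wt\Div(D,\chi)$ with $\wt\Div((1\times j)^*D,(1\times j)^*\chi)$ by the same commutation-with-$\partial$ argument used in \Cref{lemma:cartier-functoriality}. You simply spell out the bookkeeping in more detail than the paper does.
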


\begin{proof}
By the same argument as in the proof of \Cref{lemma:cartier-functoriality} we have 
\[(1\times j)^*\wt\Div(D,\chi)=\wt\Div((1\times j)^*D,(1\times j)^*\chi).\]
Hence the claim follows from \Cref{cor:supp-open}.
\end{proof}

The above lemmas give a procedure to construct a morphism of pairs from a Cartier divisor:

\begin{lemma}\label{lemma:cartier-pair}
Assume the hypotheses of \Cref{lemma:cartier}, and let $j_X\colon U\to X$ and $j_Y\colon V\to Y$ be open subschemes. Let $D'\defeq D|_{U\times Y}$ be the restriction of $D$ to $U\times Y$. Suppose that $\vert D'\vert\subseteq U\times V$. Then 
\[
(\wt\Div(D,\chi),\wt\Div((j_X\times j_Y)^*D,(j_X\times j_Y)^*\chi))\in\wt\Cor_k^\pair((X,U),(Y,V)).
\]
\end{lemma}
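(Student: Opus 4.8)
The plan is to reduce the statement to \Cref{lemma:making-pairs}, together with the compatibility of $\wt\Div$ with pullback along open immersions.

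Set $\alpha\defeq\wt\Div(D,\chi)$. By \Cref{lemma:cartier} we have $\alpha\in\wt\Cor_k(X,Y)$, and by construction its support $T\defeq\supp\alpha$ is contained in $\abs{D}$: indeed $\wt\Div(D)$ is a cohomology class in $\wt\CH^1_{\abs{D}}(X\times Y,\calO_{X\times Y}(D))$, and the twisting isomorphism induced by $\chi$ does not enlarge supports, so $\alpha\in\wt\CH^1_{\abs{D}}(X\times Y,\omega_Y)$. Therefore
\[
T\cap(U\times Y)\subseteq\abs{D}\cap(U\times Y)=\abs{D'}\subseteq U\times V,
\]
the equality holding because $D'=D|_{U\times Y}$, and the last inclusion being the hypothesis. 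Thus the support hypothesis of \Cref{lemma:making-pairs} is met (with $\alpha$ as above, $X,Y$ smooth connected by \Cref{lemma:cartier}), and we conclude that $(\alpha,(j_X\times j_Y)^*\alpha)\in\wt\Cor_k^\pair((X,U),(Y,V))$; in particular $(j_X\times j_Y)^*\alpha\in\wt\Cor_k(U,V)$.

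It remains to identify $(j_X\times j_Y)^*\alpha$ with $\wt\Div((j_X\times j_Y)^*D,(j_X\times j_Y)^*\chi)$. This follows exactly as in the proof of \Cref{lemma:cartier-functoriality}: the open immersion $j_X\times j_Y\colon U\times V\to X\times Y$ induces a morphism of Rost--Schmid complexes commuting with the boundary operator $\partial$ and compatible with the twists via $(j_X\times j_Y)^*\chi$. Since $\abs{D'}\subseteq U\times V$, every codimension $1$-point of $\abs{D}$ lying over $U$ already lies in $U\times V$, and at such a point a local equation $f_i$ for $D$ restricts to a local equation for $(j_X\times j_Y)^*D$; as $\wt\ord_x$ depends only on the germ of $f_i$ at $x$, the induced map sends the representing cycle $\wt\ord(D)$ to $\wt\ord((j_X\times j_Y)^*D)$. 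Hence $(j_X\times j_Y)^*\wt\Div(D,\chi)=\wt\Div((j_X\times j_Y)^*D,(j_X\times j_Y)^*\chi)$ in $\wt\Cor_k(U,V)$ — which in passing also shows that the right-hand side is a well-defined finite $MW$-correspondence, $(j_X\times j_Y)^*D$ being nothing but the restriction $D|_{U\times V}$, with support $\abs{D'}$ finite and surjective over $U$ and $\omega_Y|_{U\times V}\cong\omega_V$ since $j_Y$ is étale. Substituting this identification into the conclusion of \Cref{lemma:making-pairs} yields the claim.

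I do not expect a genuine obstacle here: the statement is a bookkeeping combination of \Cref{lemma:cartier}, \Cref{lemma:making-pairs}, and the functoriality of $\wt\Div$ under open pullback. The only point requiring a little care is checking that the hypothesis $\abs{D'}\subseteq U\times V$ simultaneously supplies the support condition needed to invoke \Cref{lemma:making-pairs} and guarantees that the pulled-back data $(j_X\times j_Y)^*D$, $(j_X\times j_Y)^*\chi$ again satisfies the hypotheses of \Cref{lemma:cartier} on $U\times V$, so that $\wt\Div((j_X\times j_Y)^*D,(j_X\times j_Y)^*\chi)$ makes sense at all.
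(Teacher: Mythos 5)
Your argument is correct and follows the paper's own route: the paper likewise deduces the claim by combining the identity $(j_X\times j_Y)^*\wt\Div(D,\chi)=\wt\Div((j_X\times j_Y)^*D,(j_X\times j_Y)^*\chi)$ (via the argument of \Cref{lemma:cartier-functoriality}) with \Cref{lemma:making-pairs}. You merely spell out the support verification $T\cap(U\times Y)\subseteq\abs{D'}\subseteq U\times V$ and the Rost--Schmid compatibility in more detail than the paper does.
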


\begin{proof}
By \Cref{lemma:cartier-functoriality}, $\wt\Div((j_X\times j_Y)^*D)=(j_X\times j_Y)^*\wt\Div(D)$, hence the claim follows from \Cref{lemma:making-pairs}.
\end{proof}

We will frequently make use of the following well known fact in order to determine if the support of a given principal divisor satisfies the hypotheses of \Cref{lemma:cartier}:

\begin{lemma}\label{lemma:monic-fin-surj}
Let $A$ be a ring, and suppose that $P$ is a monic polynomial in $A[t]$. Then $\Spec(A[t]/(P))\to\Spec(A)$ is finite, and every irreducible component of $\Spec(A[t]/(P))$ surjects onto $\Spec(A)$.
\end{lemma}

\begin{proof}
Write $P(t)=t^n+a_{n-1}t^{n-1}+\cdots+a_0$, and let $M\defeq A[t]/(P)$. Then $M$ is generated as an $A$-module by $1,t,\dots,t^{n-1}$, hence $\Spec(A[t]/(P))\to\Spec(A)$ is finite. As $A[t]/(P)$ is integral over $A$, it follows that the morphism is surjective as well.
\end{proof}

\section{Zariski excision on the affine line}\label{section:Zar-excision}
The aim of this section is to prove the following excision result:

\begin{theorem}\label{thm:Zar-excision}
Let $x\in\A^1$ be a closed point and suppose that $V\subseteq U\subseteq\A^1$ are two Zariski open neighborhoods of $x$. Let $i\colon V\hookrightarrow U$ denote the inclusion, and let $\scrF\in\wt\PSh(k)$ be a homotopy invariant presheaf with $\rmMW$-transfers. Then the induced map
\[i^*\colon \dfrac{\scrF(U\setminus x)}{\scrF(U)}\to \dfrac{\scrF(V\setminus x)}{\scrF(V)}\]
is an isomorphism.
\end{theorem}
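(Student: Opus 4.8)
The plan is to construct, up to $\A^1$-homotopy in $\wt\Cor_k^\pair$, a two-sided inverse to the morphism of pairs induced by $i\colon V\hookrightarrow U$. More precisely, working with the pairs $(U,U\setminus x)$ and $(V,V\setminus x)$, the inclusion $i$ determines a morphism $(i,i)\colon (V,V\setminus x)\to (U,U\setminus x)$ in $\wt\Cor_k^\pair$, and by \Cref{prop:construction} it suffices to exhibit a morphism $r\in\wt\Cor_k^\pair((U,U\setminus x),(V,V\setminus x))$ such that $r\circ (i,i)\sim_{\A^1}\id$ and $(i,i)\circ r\sim_{\A^1}\id$ in $h\wt\Cor_k^\pair$; then applying $\scrF$, which factors through $h\wt\Cor_k$ since it is homotopy invariant, forces $i^*$ to be an isomorphism with inverse $r^*$. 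The existence of such left and right inverses is precisely what the paper announces will be proved in Sections~\ref{section:Zar-inj} and~\ref{section:Zar-surj}, so here I will only indicate the geometric mechanism.

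First I would reduce to the affine-line picture: after an affine change of coordinates we may assume $x=0$, and since $V\subseteq U\subseteq\A^1$ are open with complements finite sets of closed points, both $U$ and $V$ are complements of finitely many points, and $U\setminus V$ is a finite set of closed points disjoint from $x$. The key construction is to produce an ``$MW$-retraction'' $U\to V$ relative to the point $x$, realized not by an honest morphism of schemes (which need not exist) but by a finite $MW$-correspondence coming from a Cartier divisor on $U\times V$ (or on $U\times\A^1$) via the machinery of \Cref{section:cartier}: one writes down an explicit family of functions, parametrized by $\A^1$, whose vanishing locus is finite and surjective over $U$ and contained in $U\times V$, so that \Cref{lemma:cartier}, \Cref{lemma:cartier-open} and \Cref{lemma:cartier-pair} assemble it into a morphism of pairs, and whose specializations at $0$ and $1$ give the desired identity and the composite $i\circ r$ (resp.\ $r\circ i$). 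The trivialization of $\calO(D)$ against $\omega_Y$ needed to invoke \Cref{lemma:cartier} will be supplied by the explicit equations, and \Cref{lemma:cartier-functoriality} handles the compatibility of the divisor constructions under the relevant base changes and under restriction to the open subschemes.

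Concretely, on the line the standard trick is to use a linear-fractional or polynomial homotopy: for instance a divisor on $U\times \A^1\times\A^1_t$ cut out by something like $t\,(s - \varphi(u)) + (1-t)(s-u)$ for a suitable map $\varphi$ collapsing the extra points of $U$ into $V$ while fixing $x$, arranging the support to stay finite over $U\times\A^1_t$ and inside $U\times V$ for all $t$. One then checks at $t=0$ and $t=1$ that the two specializations are, on the one hand, the graph of $i$ (up to the identifications of \Cref{lemma:iso} and \Cref{cor:supp-open}) and, on the other hand, a correspondence factoring through $V$ as required; \Cref{lemma:cartier-sum} and \Cref{lemma:cartier-functoriality} let one split the divisor along connected components and control the behaviour of $\wt\Div$ under pullback, so the homotopy descends to $\wt\Cor_k^\pair$.

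The main obstacle is geometric rather than formal: one must choose the family of Cartier divisors so that simultaneously (i) its support is finite and surjective over the base $U$ (so that it is an admissible correspondence) for \emph{every} value of the homotopy parameter $t\in\A^1$, not just generically, (ii) its support avoids the deleted point $x$ over the open part $U\setminus x$, so that it defines a genuine morphism of pairs, and (iii) the line bundle $\calO(D)$ is consistently trivialized against $\omega_Y$ throughout the family. Guaranteeing properness/finiteness over the whole affine line $\A^1_t$ is the delicate point — a naive linear interpolation of defining equations can acquire components running off to infinity or colliding with $x$ — and this is exactly where the geometric input of \cite[§13]{hty-inv} is invoked to engineer the correct divisor; once such a divisor is in hand, the rest is bookkeeping with the lemmas of Sections~\ref{section:pairs} and~\ref{section:cartier}.
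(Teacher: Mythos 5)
Your overall strategy (produce one-sided inverses to $i$ in $h\wt\Cor_k^\pair$ via families of Cartier divisors, then apply \Cref{prop:construction}) is the right one, and your identification of the delicate points (finiteness of the support over all of $\A^1_t$, avoidance of $x$ over $U\setminus x$, trivialization of $\calO(D)$ against $\omega_Y$) matches the actual difficulties. But there is a genuine gap in the reduction you propose: you assert that it suffices to find $r$ with \emph{both} $r\circ(i,i)\sim_{\A^1}\id$ and $(i,i)\circ r\sim_{\A^1}\id$ in $h\wt\Cor_k^\pair$, i.e.\ that $(i,i)$ is invertible in the homotopy category of pairs. That statement is strictly stronger than what is needed, and it is not what the construction delivers. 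The divisorial homotopy you describe, specialized at the endpoint that is supposed to give the identity of $V$, does not produce $\id_V$ on the nose: its support is a disjoint union of the diagonal and an extra component lying in $V\times(V\setminus x)$ (in the paper, $\calV(E_m)=\Delta(V)\amalg\calV(F_{m-1})$), so one only obtains $\Psi\circ i=\id_V+j_V\circ\Theta$ in $h\wt\Cor_k^\pair$, where $\Theta$ factors through the pair $(V\setminus x,V\setminus x)$. This error term is in general nonzero as a morphism of pairs; it only becomes invisible after applying $\scrF$ and passing to the quotient $\scrF(V\setminus x)/\scrF(V)$, because $\scrF(V\setminus x,V\setminus x)=0$. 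Without isolating this correction term (via \Cref{lemma:conn-comp-sum}, \Cref{lemma:cartier-sum} and \Cref{cor:supp-open}) and observing that it acts by zero on the quotient, the surjectivity half of your argument does not close.

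Two smaller points. First, the injectivity half also needs care specific to the $MW$-setting: the difference of consecutive ``thick diagonals'' is $\Delta_{m+1}-\Delta_m=\ip{(-1)^m}\cdot\id_U$, so one must take $\Phi=\Phi_{2m+1}-\Phi_{2m}$ with an \emph{even} lower index to kill the $\ip{-1}$ twist; a naive single correspondence $\Phi_m$ only satisfies $i\circ\Phi_m\sim_{\A^1}\Delta_m=m_\epsilon$, not $\id_U$. Second, the left inverse used for injectivity and the correspondence $\Psi$ used for surjectivity are built from different interpolation data (degree $m$ versus degree $1$ behaviour at $U\times A$ and $U\times x$), so they are not a single two-sided inverse $r$ as your plan presumes.
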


The proof of Zariski excision proceeds in three steps. First we prove:

\begin{theorem}[Injectivity on the affine line]\label{thm:inj-aff-line}
With the notation in \Cref{thm:Zar-excision}, there exists a finite $\rmMW$-correspondence $\Phi\in\wt\Cor_k(U,V)$ such that
\[
\ol i\circ\ol \Phi=\id_U
\]in $\hwtCor_k$.
\end{theorem}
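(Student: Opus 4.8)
The plan is to construct the correspondence $\Phi \in \wt\Cor_k(U,V)$ from an explicit Cartier divisor on $U \times V$, in the spirit of \Cref{section:cartier}, and then verify that $\ol{i} \circ \ol{\Phi} = \id_U$ by exhibiting an explicit $\A^1$-homotopy between $i \circ \Phi \in \wt\Cor_k(U,U)$ and $\id_U$. Since $U \subseteq \A^1$ is affine with coordinate ring a localization $k[t]_g$, and $V \subseteq U$ is cut out by inverting one further polynomial, the point $x$ corresponds to a monic irreducible $p(t)$ with $p \in \Gamma(V, \calO^\times_V)$ still vanishing at $x$; the idea is that the graph of the identity $U \to U$ can be perturbed, using the fact that $V$ is a neighborhood of $x$, so that the support of the perturbed correspondence lands in $U \times V$. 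Concretely, I would look for a finite surjective correspondence whose support is the zero locus in $U \times \A^1$ (with coordinates $(t, s)$) of something like $s - t$ twisted or deformed near $x$, and arrange a one-parameter family $D_\lambda$ on $\A^1 \times U \times \A^1$ interpolating (at $\lambda = 0$) the diagonal and (at $\lambda = 1$) a correspondence supported in $U \times V$. This is exactly the kind of geometric construction that \cite[§13]{hty-inv} provides for framed correspondences, and the point of the paper is to carry it through in $\wt\Cor_k$ using \Cref{lemma:cartier}, \Cref{lemma:cartier-functoriality}, and \Cref{lemma:cartier-open}.

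The key steps, in order, would be: (1) Write $U = \Spec k[t]_g$ and produce a polynomial (or rational function) $h(t,s) \in \Gamma(U \times \A^1, \calO)$ whose vanishing locus $D$ is finite and surjective over $U$, restricts to the diagonal graph over a neighborhood one can control, and has $\calO(D) \cong \omega_{\A^1} \cong \calO$ canonically trivializable, so that \Cref{lemma:cartier} applies and $\wt\Div(D,\chi) \in \wt\Cor_k(U, \A^1)$. (2) Check that $|D| \subseteq U \times V$, so that by \Cref{lemma:cartier-open} it factors through a correspondence $\Phi \in \wt\Cor_k(U, V)$ with $i \circ \Phi = \wt\Div(D,\chi)$ (viewing $i$ as the composite $V \hookrightarrow \A^1$ restricted appropriately — more precisely one works with the open $V \subseteq U$ directly, taking $j$ there to be the inclusion $V \hookrightarrow U$ and the ambient $Y$ to be $U$ itself). (3) Construct a Cartier divisor $\mathcal{D}$ on $\A^1 \times U \times U$, with $\calO(\mathcal{D})$ trivialized, finite and surjective over $\A^1 \times U$, such that $i_0^* \mathcal{D} = $ (graph of $i \circ \Phi$) and $i_1^* \mathcal{D} = $ (graph of $\id_U$), using \Cref{lemma:cartier-functoriality} to identify the restrictions; conclude $\ol{i \circ \Phi} = \ol{\id_U}$ in $h\wt\Cor_k(U,U)$, which is the desired identity since composition descends to $h\wt\Cor_k$.

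The main obstacle I expect is step (3): producing the homotopy divisor $\mathcal{D}$ so that it is \emph{simultaneously} finite and surjective over $\A^1 \times U$ for \emph{all} values of the homotopy parameter (properness/finiteness is the delicate condition — a naive linear interpolation between two polynomials in $s$ typically loses the monic leading term or acquires spurious branches escaping to infinity), \emph{and} so that its line bundle stays canonically trivial along the family (so that the twist by $\omega$ is handled coherently and \Cref{lemma:cartier-functoriality} gives the endpoint identifications on the nose, not just up to a unit). In the framed setting \cite[§13]{hty-inv} this is arranged by a careful choice of the deforming function adapted to the local structure at $x$; here one additionally has to keep track of the Milnor--Witt twist, which is where the hypotheses that $k$ is infinite (to choose generic enough auxiliary functions / rational points) and $\Char k \neq 2$ (implicit in all of Milnor--Witt $K$-theory) enter. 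A secondary, more bookkeeping-type obstacle is making sure the trivializations $\chi$ chosen for $D$ and for the two endpoints of $\mathcal{D}$ are compatible, so that no stray unit $\ip{u}$ appears and spoils the equality $\ol{i}\circ\ol\Phi = \id_U$ rather than merely $= \ip{u}\cdot\id_U$.
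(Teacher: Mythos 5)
Your overall architecture matches the paper's: build $\Phi$ as $\wt\Div$ of a divisor cut out by an interpolating polynomial (the paper uses a Chinese-remainder-type lemma, \Cref{lemma:CRT1}, to produce $G_m\in k[U][Y]$ monic of degree $m$ with $G_m|_{U\times(\A^1\setminus U)}=(Y-X)^m$, $G_m|_{U\times(U\setminus V)}=1$, forcing $\calV(G_m)\subseteq U\times V$), then connect $i\circ\Phi$ to a diagonal correspondence by the linear homotopy $H_\theta=\theta G_m+(1-\theta)(Y-X)^m$. But your step (3) contains a genuine gap that you flag as "the main obstacle" without resolving: you propose to interpolate to the graph of $\id_U$, i.e.\ to the divisor of $Y-X$. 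This cannot work, precisely for the finiteness reason you identify --- the linear interpolation stays finite over $\A^1\times U$ only because \emph{both} endpoints are monic of the \emph{same} degree $m$ in $Y$, and $G_m$ must have large degree $m\gg0$ for the interpolation conditions to be satisfiable. So the homotopy is forced to end at the ``thick diagonal'' $\Delta_m=\wt\Div(\calV((Y-X)^m),\chi_m)$, not at $\Delta_1=\id_U$.

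The missing idea is that in the Milnor--Witt setting $\Delta_m$ is \emph{not} the identity, nor even a unit multiple of it: a residue computation (\Cref{lemma:difference}, using $[a^m]=m_\epsilon[a]$) gives $\Delta_m=m_\epsilon\cdot\id_U$ where $m_\epsilon=\sum_{i=1}^{m}\ip{(-1)^{i-1}}\in GW(k)$ --- e.g.\ $2_\epsilon$ is the hyperbolic form. So a single $\Phi_m$ only yields $i\circ\Phi_m\sim_{\A^1}m_\epsilon\cdot\id_U$, and this is not a ``bookkeeping'' discrepancy by a unit $\ip{u}$ that a better choice of trivialization could remove. The paper's fix is to compute $\Delta_{m+1}-\Delta_m=\ip{(-1)^m}\cdot\id_U$ and set $\Phi\defeq\Phi_{2m+1}-\Phi_{2m}$, so that $i\circ\Phi\sim_{\A^1}\Delta_{2m+1}-\Delta_{2m}=\id_U$ on the nose. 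Without this difference trick (or an equivalent device) your proof establishes only that $i^*$ is split injective after multiplication by $m_\epsilon$, which is strictly weaker than the theorem. (A minor additional point: the hypothesis that $k$ is infinite is used in the interpolation lemma imported from \cite[§5]{hty-inv}, not in choosing trivializations.)
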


\Cref{thm:inj-aff-line} implies that $\Phi^*\circ i^*=\id_{\scrF(U)}$ for any homotopy invariant $\scrF\in\wt\PSh(k)$, i.e., that $i^*$ is injective. Letting $V=U\setminus x$, this means that $\scrF(U)$ is a subgroup of $\scrF(U\setminus x)$, justifying the notation of \Cref{thm:Zar-excision}. 

The next step is then to extend \Cref{thm:inj-aff-line} to the category $\wt\Cor_k^\pair$ of pairs. By abuse of notation, write $i$ also for the inclusion $i\colon(V,V\setminus x)\hookrightarrow(U,U\setminus x)$ in $\SmOp_k$. By \Cref{prop:construction}, $i$ induces a map 
\[
i^*\colon \dfrac{\scrF(U\setminus x)}{\scrF(U)}\to \dfrac{\scrF(V\setminus x)}{\scrF(V)}
\]
on the quotient, and the following theorem tells us that $i^*$ is injective:

\begin{theorem}[Injectivity of Zariski excision]\label{thm:Zar-inj}
There exists a finite $\rmMW$-correspondence $\Phi\in\wt\Cor_k^\pair((U,U\setminus x),(V,V\setminus x))$ such that  
\[
\ol i\circ\ol\Phi=\id_{(U,U\setminus x)}
\]in $\hwtCor_k^\pair$.
\end{theorem}

In the final step we establish surjectivity of $i^*$:

\begin{theorem}[Surjectivity of Zariski excision]\label{thm:Zar-surj}
With the notation in \Cref{thm:Zar-excision}, there exist finite $\rmMW$-correspondences $\Psi\in\wt\Cor_k^\pair((U,U\setminus x),(V,V\setminus x))$ and $\Theta\in\wt\Cor_k^\pair((V,V\setminus x),(V\setminus x,V\setminus x))$ such that
\[
\ol\Psi\circ\ol i-\ol{j_V}\circ\ol\Theta=\id_{(V,V\setminus x)}
\]in $\hwtCor_k^\pair$, where $j_V\colon (V\setminus x, V\setminus x)\hookrightarrow (V,V\setminus x)$ denotes the inclusion in $\SmOp_k$.
\end{theorem}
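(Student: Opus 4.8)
The plan is to take $\Psi\defeq\Phi\in\wt\Cor_k^\pair((U,U\setminus x),(V,V\setminus x))$, the morphism of pairs furnished by \Cref{thm:Zar-inj}, and to construct $\Theta$ together with the homotopy realizing $\ol\Psi\circ\ol i-\ol{j_V}\circ\ol\Theta=\id_V$ from a single explicit Cartier divisor, by means of the divisor-to-correspondence constructions of \Cref{section:cartier}. Recall that $\Phi$ itself, and the homotopy witnessing $\ol i\circ\ol\Phi=\id_U$, arise from Cartier divisors on $U\times\A^1$ and $\A^1\times U\times\A^1$ respectively, the underlying geometry being provided by \cite[§13]{hty-inv}. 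Write $U=\A^1\setminus Z_U$ and $V=\A^1\setminus Z_V$ with $Z_U\subseteq Z_V$ finite sets of closed points; since $x\in V$, the set $Z_V$---and in particular the finite set $U\setminus V$---is disjoint from $x$, which is precisely what will let $\Theta$ factor through $V\setminus x$.

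The core step is to produce a Cartier divisor $\scrG$ on $\A^1\times V\times\A^1$, with coordinate $t$ on the first factor and $s$ on the last, enjoying the following properties: (a) $|\scrG|$ is finite and surjective over $\A^1\times V$ and contained in $\A^1\times V\times V$, and is equipped with a trivialization $\calO_{\A^1\times V\times\A^1}(\scrG)\cong\omega_V$, so that by \Cref{lemma:cartier} it defines a finite $MW$-correspondence $G\in\wt\Cor_k(\A^1\times V,V)$; (b) $i_1^*\scrG$ is the diagonal divisor, so that $i_1^*G=\id_V$; (c) the support of $i_0^*\scrG$ is a disjoint union $T\sqcup T'$ with $T'\subseteq V\times(V\setminus x)$ and with the summand of $i_0^*G$ supported on $T$ equal---up to a further $\A^1$-homotopy, which we absorb---to $\Psi\circ i$; and (d) $\scrG$ defines a morphism of pairs $\A^1\times(V,V\setminus x)\to(V,V\setminus x)$ as in \Cref{lemma:cartier-pair}. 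Granting such a $\scrG$: \Cref{lemma:conn-comp-sum} splits $i_0^*G$ along the components $T,T'$ of its support as $(\Psi\circ i)+G'$ with $\supp G'\subseteq V\times(V\setminus x)$; \Cref{cor:supp-open} then produces a unique $\Theta_1\in\wt\Cor_k(V,V\setminus x)$ with $j_V\circ\Theta_1=G'$, and $\Theta_1$ together with its restriction to $V\setminus x$ forms a morphism $\Theta\in\wt\Cor_k^\pair((V,V\setminus x),(V\setminus x,V\setminus x))$. By (d), $G$ is a homotopy in $\wt\Cor_k^\pair$ from $i_0^*G=\Psi\circ i+j_V\circ\Theta_1$ to $i_1^*G=\id_V$; rearranging and absorbing the sign into $\Theta$ yields $\ol\Psi\circ\ol i-\ol{j_V}\circ\ol\Theta=\id_V$ in $h\wt\Cor_k^\pair$, which is the assertion.

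The hard part is the construction of the divisor $\scrG$ with properties (a)--(d) simultaneously: it must stay finite over $\A^1\times V$ for all $t$, specialize to the diagonal at $t=1$, and degenerate at $t=0$ so as to split off (a representative of) $\Psi\circ i$ from a remainder whose target-support avoids $x$, while all along its twisting bundle remains isomorphic to $\omega_V$, its support stays inside $\A^1\times V\times V$, and it remains compatible with the open subscheme $\A^1\times(V\setminus x)$. The underlying family---a one-parameter family of finite subschemes of $V\times\A^1$ degenerating at $t=0$ in the prescribed way---is exactly what the geometric constructions of \cite[§13]{hty-inv} supply (there used to build framed correspondences); what remains to be done here is to recast that family as a Cartier divisor and to keep track of the canonical bundle $\omega_V$ and of the support conditions, using \Cref{lemma:cartier,lemma:cartier-functoriality,lemma:making-pairs,lemma:cartier-pair}, so that $G$, $\Theta_1$ and the homotopy all live in $\wt\Cor_k$, respectively $\wt\Cor_k^\pair$, and not merely in some ambient group of Chow--Witt classes.
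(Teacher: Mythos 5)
Your high-level architecture is right — a one-parameter family of Cartier divisors over $\A^1\times V$, converted to a homotopy in $\wt\Cor_k$ via \Cref{lemma:cartier}, with one endpoint splitting by \Cref{lemma:conn-comp-sum} and \Cref{cor:supp-open} into $\id_V$ plus a piece factoring through $V\setminus x$, all promoted to $\wt\Cor_k^\pair$ via \Cref{lemma:cartier-pair}. This is exactly the paper's strategy. But the proposal has two genuine gaps. First, the entire content of the proof is the construction of the family, and you explicitly defer it ("the hard part is the construction of the divisor $\scrG$..."). The paper does this by interpolation: it produces polynomials $G_m\in k[U][Y]$ monic of degree $m$ with $G_m|_{U\times B}=1$ and $G_m|_{U\times A}=G_m|_{U\times x}=(Y-X)$, and $F_{m-1}\in k[V][Y]$ monic of degree $m-1$ with $F_{m-1}|_{V\times B}=(Y-X)^{-1}$, $F_{m-1}|_{V\times A}=1$, $F_{m-1}|_{\Delta(V)}=1$; it then sets $E_m\defeq(Y-X)F_{m-1}$ and $H_\theta\defeq\theta G_m+(1-\theta)E_m$. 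The point is that $\calV(E_m)=\Delta(V)\amalg\calV(F_{m-1})$ with $\calV(F_{m-1})\subseteq V\times(V\setminus x)$, which is what produces the $\id_V+j_V\circ\Theta$ endpoint, and the matching boundary conditions of $G_m$ and $E_m$ on $A$, $B$ and at $x$ are what keep $\calV(H_\theta)$ finite over $\A^1\times V$, inside $\A^1\times V\times V$, and compatible with removing $x$. None of this is supplied by your outline, and it is not a routine repackaging of \cite[§13]{hty-inv}.

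Second, your endpoint specification is not realizable as stated, and your choice $\Psi\defeq\Phi$ is the wrong move. A divisor $|\scrG|$ finite and surjective over $\A^1\times V$ is cut out by a polynomial of constant degree $m$ in the last coordinate; its fiber at $t=1$ therefore cannot be the bare diagonal (degree $1$) while its fiber at $t=0$ splits off extra components (total degree still $m$). The paper avoids this by putting the "extra" degree at the identity endpoint, via the factorization $E_m=(Y-X)\cdot F_{m-1}$, so that one endpoint is $\id_V+j_V\circ\Theta$ and the other is $\Psi\circ i$, both of degree $m$. Relatedly, $\Psi$ should not be the $\Phi$ of \Cref{thm:Zar-inj}: that $\Phi$ is a difference $\Phi_{2m+1}-\Phi_{2m}$ built from polynomials restricting to $(Y-X)^m$ on $U\times A$ and $U\times x$, designed so that $i\circ\Phi\sim_{\A^1}\id_U$; there is no reason the $t=0$ fiber of your family should recover $\Phi\circ i$ even up to homotopy, and the "further $\A^1$-homotopy, which we absorb" is an unconstructed ingredient. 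The theorem only asserts existence of \emph{some} $\Psi$, and the paper constructs it afresh (as $\wt\Div(D_{G_m},\psi)$ for the new $G_m$) precisely so that it appears on the nose as the $\theta=1$ fiber of the same family $H_\theta$.
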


We note that \Cref{thm:Zar-excision} is a consequence of Theorems \ref{thm:Zar-inj} and \ref{thm:Zar-surj}:

\begin{proof}[Proof of \Cref{thm:Zar-excision}]
As $\Phi$ is a morphism of pairs by \Cref{thm:Zar-inj}, \Cref{prop:construction} tells us that $\Phi$ induces a morphism on the quotient 
\[
\Phi^*\colon\dfrac{\scrF(V\setminus x)}{\scrF(V)}\to\dfrac{\scrF(U\setminus x)}{\scrF(U)}.\] 
Moreover, $\Phi^*\circ i^*=\id$ by \Cref{thm:Zar-inj}, hence $i^*$ is injective.

On the other hand, as $\Theta$ maps to $(V\setminus x,V\setminus x)$ by \Cref{thm:Zar-surj}, it follows that $j_V\circ\Theta$ induces the trivial map on the quotient. Hence 
\[i^*\circ\Psi^*=\id\colon\dfrac{\scrF(V\setminus x)}{\scrF(V)}\to\dfrac{\scrF(V\setminus x)}{\scrF(V)},\] 
so that $i^*$ is surjective.
\end{proof}

It is therefore enough to prove Theorems \ref{thm:inj-aff-line}, \ref{thm:Zar-inj}, and \ref{thm:Zar-surj}.

\section{Injectivity on the affine line}\label{section:inj-aff-line}
We continue with the same notation as in \Cref{thm:Zar-excision}. Thus $V\subseteq U\subseteq\A^1$ are two Zariski open neighborhoods of a closed point $x\in\A^1$, with inclusion $i\colon V\to U$. In order to produce the desired $\rmMW$-correspondence $\Phi\in\wt\Cor_k(U,V)$ of \Cref{thm:inj-aff-line}, we will need to consider certain ``thick diagonals'' $\Delta_m\in\wt\Cor_k(U,U)$, constructed as follows.

Let $U\times U\subseteq\A^2$ have coordinates $X$ and $Y$, respectively, and let $\Delta\defeq\Delta(U)\subseteq U\times U$ denote the diagonal. For each $m\ge1$, let $f_m$ denote the polynomial $f_m(X,Y)\defeq (Y-X)^m\in k[U\times U]$. As $f_m$ is monic in $Y$, it follows from \Cref{lemma:monic-fin-surj} that the support of the divisor 
\[
D_m\defeq\calV(f_m)\defeq\{f_m=0\}\subseteq U\times U
\]
is finite and surjective over $U$. Moreover, as $D_m$ is a principal Cartier divisor on $U\times U$, there is a trivialization $\calO(D_m)\cong \calO_{U\times U}$ given by $f_m^{-1}\mapsto1$. We further obtain an isomorphism $\chi_m\colon\calO(D_m)\cong\omega_U$ by $f_m^{-1}\mapsto dY$. By \Cref{lemma:cartier}, it follows that the divisor $D_m$ gives rise to a finite $\rmMW$-correspondence from $U$ to $U$.

\begin{definition}\label{def:thick-diagonal}
For each $m\ge1$, let $\Delta_m\defeq\wt\Div(D_m,\chi_m)\in\wt\Cor_k(U,U)$ be the finite $\rmMW$-correspondence defined by the data $D_m$ and $\chi_m$ above.
\end{definition}

\begin{remark}
By the definition of $\wt\Div(D_m,\chi_m)$, we see that $\Delta_m$ is given by the total residue 
\[
\Delta_m=\partial([f_m]\otimes dY)\in \wt\CH^1_{\Delta}(U\times U,\omega_{U})
\]of the element $[f_m]\otimes dY\in \rmK_1^{\rmMW}(k(U\times U),\omega_U)$. Thus the support of the $\rmMW$-correspondence $\Delta_m$ is the diagonal $\Delta=D_1\subseteq U\times U$.
\end{remark}

\begin{lemma}\label{lemma:difference}
For any $m\ge0$ we have $\Delta_{m+1}-\Delta_{m}=\ip{-1}^m\cdot\Delta_1\in\wt\Cor_k(U,U)$, with $\Delta_1=\id_U$. 
\end{lemma}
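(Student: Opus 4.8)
The statement to prove is that $\Delta_{m+1} - \Delta_m = \langle -1\rangle^m \cdot \Delta_1$ in $\wt\Cor_k(U,U)$, where $\Delta_m = \partial([f_m]\otimes dY)$ with $f_m = (Y-X)^m$. Since all the $\Delta_m$ are supported on the diagonal $\Delta = D_1$ and live in $\wt\CH^1_\Delta(U\times U, \omega_U)$, and the residue map $\partial$ is computed in the Rost--Schmid complex, the cleanest route is to work directly at the level of the Milnor--Witt $K$-theory element $[f_m]\otimes dY \in K_1^{MW}(k(U\times U), \omega_U)$ and compute $\partial$ after expanding $[f_m]$ using the relations in $K_*^{MW}$. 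So first I would record the key algebraic identity: writing $t := Y - X$ (a local coordinate at the generic point of $\Delta$), the twisted $\eta$-logarithmic relation (3) gives $[t^{m+1}] = [t] + [t^m] + \eta[t][t^m]$, and more usefully, iterating it, $[t^m] = \sum_{i=0}^{m-1}\langle t^i\rangle [t]$ where $\langle u\rangle = 1 + \eta[u]$; this is the standard formula for $[u^m]$ in Milnor--Witt $K$-theory. Hence $[f_{m+1}] - [f_m] = [t^{m+1}] - [t^m] = \langle t^m\rangle[t] = \langle t^m\rangle \cdot [f_1]$.

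Next I would apply the boundary map. We have $\Delta_{m+1} - \Delta_m = \partial\big(([f_{m+1}] - [f_m])\otimes dY\big) = \partial\big(\langle t^m\rangle [t]\otimes dY\big)$. Now $\langle t^m\rangle = \langle t\rangle^m$ since $\langle ab\rangle = \langle a\rangle\langle b\rangle$ in $K_0^{MW}$ (this follows from relation (3): $\langle ab\rangle = 1 + \eta[ab] = 1 + \eta([a]+[b]+\eta[a][b]) = (1+\eta[a])(1+\eta[b])$). Using the Leibniz-type behaviour of the residue $\partial$ with respect to multiplication by a unit — more precisely, that $\partial_\Delta(\langle u\rangle\cdot\alpha) = \overline{\langle u\rangle}\cdot\partial_\Delta(\alpha)$ when $u$ is a unit at the generic point of $\Delta$, where $\overline{\langle u\rangle}$ denotes the specialization — and the fact that $t = Y-X$ specializes to... well, $t$ itself vanishes on $\Delta$, so I need to be careful here. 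The point is that $\langle t\rangle$ restricted near $\Delta$: on the function field $k(\Delta) = k(U)$, the element $t$ is a uniformizer, and $\langle t\rangle$ is not a unit's class — rather I should move the $\langle\cdot\rangle$ factor correctly. The right statement: for $u$ a uniformizer, $\partial(\langle u\rangle\beta) = -\langle -1\rangle\,\partial(\beta')$-type twisting may occur, so I expect this specialization bookkeeping to be the main obstacle.

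**Main obstacle.** The delicate point is the interaction of the residue map $\partial$ with the factor $\langle t^m\rangle = \langle t\rangle^m$, because $t = Y - X$ is precisely the uniformizer defining $\Delta$, so $\langle t\rangle$ is \emph{not} a unit along $\Delta$ and cannot simply be pulled out of $\partial$. The resolution is to use the explicit Rost--Schmid residue formula: for a uniformizer $\pi$, one has $\partial(\langle \pi\rangle^m \cdot [\pi]\otimes\omega) $ computed via $[\pi^{\,\ell}]\otimes[\pi] $-type terms, and the key computational input is the identity $\langle \pi\rangle[\pi] = [\pi] $ modulo terms killed by $\partial$, together with $\partial([\pi]\otimes\omega) = \langle -1\rangle^{?}\cdot(\text{unit})$. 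Concretely, since $[u][u] = [u][-1]$ in $K_*^{MW}$ (a consequence of the Steinberg relation, see Morel), one gets $\langle t\rangle[t] = [t] + \eta[t][t] = [t] + \eta[t][-1] = (1 + \eta[-1])[t]$... and $1 + \eta[-1] = \langle -1\rangle$, wait — $\langle -1 \rangle = 1 + \eta[-1]$, yes. So $\langle t\rangle \cdot [t] = \langle -1\rangle\cdot[t]$, hence $\langle t\rangle^m[t] = \langle -1\rangle^m [t]$, and therefore $[f_{m+1}] - [f_m] = \langle t^m\rangle[t] = \langle -1\rangle^m[t] = \langle -1\rangle^m[f_1]$ \emph{as elements of $K_1^{MW}(k(U\times U),\omega_U)$}. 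Applying $\partial$ and using that $\partial$ is $K_0^{MW}(k)$-linear (and $\langle -1\rangle \in K_0^{MW}(k)$ is a constant) yields $\Delta_{m+1} - \Delta_m = \langle -1\rangle^m\,\partial([f_1]\otimes dY) = \langle -1\rangle^m\cdot\Delta_1$. Finally I would check the base case $\Delta_1 = \id_U$ separately: this is because $D_1 = \calV(Y-X)$ is the reduced diagonal with the canonical trivialization $\chi_1$ matching the identity correspondence under the embedding $\Sm_k\to\wt\Cor_k$ from \cite[§4.3]{Calmes-Fasel} — a direct unwinding of definitions. So the real work is the $K^{MW}$ identity $\langle u\rangle[u] = \langle -1\rangle[u]$ and confirming $\partial$ commutes with multiplication by constants in $K_0^{MW}(k)$; both are standard but should be stated carefully.
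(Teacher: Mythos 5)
Your proposal is correct and follows essentially the same route as the paper: both reduce to the identity $[t^{m+1}]-[t^m]=\langle -1\rangle^m[t]$ in $K_1^{MW}$ of the function field (the paper cites Morel's formula $[a^n]=n_\epsilon[a]$ with $n_\epsilon=\sum_{i=1}^n\langle(-1)^{i-1}\rangle$, whereas you re-derive it from the relations via $\langle t\rangle[t]=\langle -1\rangle[t]$), and then apply the residue at the diagonal, using its linearity over constants in $K_0^{MW}(k)$. Your handling of the ``uniformizer'' worry --- converting $\langle t\rangle^m[t]$ to $\langle -1\rangle^m[t]$ \emph{before} taking $\partial$ --- is exactly the right resolution and matches what the paper does implicitly.
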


\begin{proof}
Since $\Delta_m$ is supported on the diagonal $\Delta\subseteq U\times U$, it suffices to compute the residue $\partial_y([f_m]\otimes dY)$ at the codimension $1$-point $y\in (U\times U)^{(1)}$ corresponding to the diagonal. 

Recall from \cite[Lemma 3.14]{Morel-over-a-field} that for any integer $n\ge0$ we have $[a^n]=n_\epsilon[a]$ in $\rmK_1^{\rmMW}$, where $n_\epsilon=\sum_{i=1}^{n}\ip{(-1)^{i-1}}$. We thus get
\[
\partial_y([f_m]\otimes dY)=m_\epsilon\otimes\ol{(Y-X)}dY\in \rmK^{\rmMW}_0(k(y),(\frakm_y/\frakm_y^2)^\vee\otimes(\omega_{U})_y).
\]
For $m=1$, this reads $\Delta_1=\ip1\otimes\ol{(Y-X)}dY=\id_U$. In the general case we obtain
\[
\Delta_{m+1}-\Delta_{m}=((m+1)_\epsilon-m_\epsilon)\otimes\ol{(Y-X)}dY=\ip{(-1)^m}\cdot\id_U,
\]using that $\Delta_1=\id_U\in\wt\Cor_k(U,U)$.\end{proof}

Our next objective is to prove the following:

\begin{lemma}
For $m\gg0$ there exists a finite $\rmMW$-correspondence $\Phi_m\colon U\to V$ such that $i\circ\Phi_m=\Delta_m$ in $\hwtCor_k(U,U)$. 
\end{lemma}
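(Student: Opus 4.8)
The plan is to produce an explicit $\A^1$-homotopy in $\wt\Cor_k(\A^1\times U,U)$ connecting the thick diagonal $\Delta_m$ to a finite $MW$-correspondence whose support lies in $U\times V$, and then to factor that endpoint through $V$ by means of \Cref{cor:supp-open}. Write $U=\A^1\setminus\calV(p)$ and $V=\A^1\setminus\calV(pq)$ for $p,q\in k[s]$ with $p(x)\neq0$ and $q(x)\neq0$, where $q$ is squarefree with $\calV(q)=U\setminus V$; factor $q=\prod_j q_j$ into irreducibles. Throughout, $X$ denotes the coordinate on the source copy of $U$, $Y$ the coordinate on the target copy, and $t$ the homotopy coordinate.

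First I would construct the homotopy divisor. Using the Chinese Remainder Theorem, and the fact that $p(Y)$ is invertible modulo each $q_j(Y)$ (because $\calV(q)\subseteq U$), choose $G\in k[X][Y]$ with $\deg_Y G<\deg q$ such that
\[
(Y-X)^m+p(Y)\,G(X,Y)\equiv 1\pmod{q_j(Y)}\qquad\text{for every }j,
\]
and on $(\A^1_t\times U_X)\times U_Y$ put $F_m(t,X,Y)\defeq(Y-X)^m+t\cdot p(Y)\cdot G(X,Y)$. Provided $m>\deg p+\deg q-1$ — this is the meaning of ``$m\gg0$'' — the polynomial $F_m$ is monic of degree $m$ in $Y$, so $\calV(F_m)$ is finite and surjective over $\A^1\times U$. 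The crucial point is that $F_m\equiv(Y-X)^m\pmod{p(Y)}$, which is a unit whenever $Y\in\calV(p)$ and $X\in U$ (the two loci being disjoint); hence $\calV(F_m)$ in fact lies inside $(\A^1\times U)\times U$. Specializing $t=0$ recovers $F_m|_{t=0}=f_m=(Y-X)^m$, i.e.\ the divisor $D_m$ of \Cref{def:thick-diagonal}, while specializing $t=1$ the congruences $F_m|_{t=1}\equiv 1\pmod{q_j(Y)}$ show that $\calV(F_m|_{t=1})$ also avoids $Y\in\calV(q)$, so its support is contained in $U\times V$.

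Next I would translate this into $MW$-correspondences using the results of \Cref{section:cartier}. Since $\calV(F_m)$ is principal, the assignment $F_m^{-1}\mapsto dY$ defines a trivialization $\chi$ of $\calO(\calV(F_m))$ matching the canonical generator of $\omega_U$, and \Cref{lemma:cartier} then yields $H_m\defeq\wt\Div(\calV(F_m),\chi)\in\wt\Cor_k(\A^1\times U,U)$. By \Cref{lemma:cartier-functoriality}, restricting along $i_0$ and $i_1$ gives $i_0^*H_m=\wt\Div(D_m,\chi_m)=\Delta_m$ and $i_1^*H_m=\wt\Div(\calV(F_m|_{t=1}),\chi|_{t=1})$; as the support of the latter lies in $U\times V$, \Cref{lemma:cartier-open} (equivalently \Cref{cor:supp-open}) produces a unique $\Phi_m\in\wt\Cor_k(U,V)$ with $i\circ\Phi_m=i_1^*H_m$. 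Finally $i_0^*H_m-i_1^*H_m=(i_0^*-i_1^*)(H_m)$ vanishes in $h\wt\Cor_k(U,U)$, so $\ol{\Delta_m}=\ol{i\circ\Phi_m}=\ol i\circ\ol{\Phi_m}$ there, which is the assertion.

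I expect the main obstacle to be the geometric bookkeeping behind $F_m$: one must exhibit a single algebraic family that is simultaneously monic in $Y$ (forcing $m$ large), finite over $\A^1\times U$, equal to the diagonal at $t=0$, and supported in $U\times V$ at $t=1$. The device reconciling all of these is the correction term $t\cdot p(Y)\cdot G(X,Y)$: being divisible by $p(Y)$ it keeps $F_m$ congruent to $(Y-X)^m$ modulo $p(Y)$, hence keeps $\calV(F_m)$ away from $Y\in\A^1\setminus U$ for every value of $t$, while the Chinese-remainder choice of $G$ forces the $t=1$ fibre into $U\times V$. This is precisely the kind of geometric input recorded in \cite[\S13]{hty-inv}; granting such a family, the remaining steps are a formal application of the constructions of \Cref{section:cartier}.
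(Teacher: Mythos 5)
Your proposal is correct and follows essentially the same route as the paper: you interpolate a monic degree-$m$ polynomial that agrees with $(Y-X)^m$ along $U\times(\A^1\setminus U)$ and is a unit along $U\times(U\setminus V)$, take the linear homotopy to $(Y-X)^m$ (your $F_m=(Y-X)^m+t\,p(Y)G$ is exactly the paper's $\theta G_m+(1-\theta)(Y-X)^m$ with $G_m=(Y-X)^m+pG$), and then apply the Cartier-divisor formalism of \Cref{section:cartier} together with \Cref{cor:supp-open}. The only difference is that you spell out the Chinese-remainder construction that the paper outsources to \cite[§5]{hty-inv} (omitting the extra condition at $U\times x$, which is not needed for this lemma but is used later for the pair version).
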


Having established these properties of $\Delta_m$ and $\Phi_m$, we will set $\Phi\defeq\Phi_{m+1}-\Phi_m$ and show that we then have $i\circ\Phi\sim_{\A^1}\id_U$ provided $m$ is an even integer $\gg0$. To define $\Phi_m$, we will need to ensure the existence of polynomials with certain specified properties.

\begin{lemma}[\protect{\cite[§5]{hty-inv}}]\label{lemma:CRT1}
Let $A\defeq\A^1\setminus U$ and $B\defeq U\setminus V$. For $m\gg0$, there exists a polynomial $G_m\in k[U][Y]=k[U\times \A^1]$, monic and of degree $m$ in $Y$, satisfying the following properties:
\begin{itemize}
\item[$(1)$]$G_m(Y)|_{U\times B}=1.$
\item[$(2)$]$G_m(Y)|_{U\times A}=(Y-X)^m|_{U\times A}$. 
\item[$(3)$] $G_m(Y)|_{U\times x}=(Y-X)^m|_{U\times x}$.
\end{itemize}
\end{lemma}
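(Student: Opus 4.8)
This is essentially a Chinese Remainder Theorem statement; the plan is to prove it directly in the ring $R[Y]$ with $R\defeq k[U]$. View $A$, $B$ and $\{x\}$ as reduced closed subschemes of the second factor $\A^1=\Spec k[Y]$, and let $h_A,h_B,h_x\in k[Y]$ be the monic polynomials generating their respective ideals, with the convention that $h_A=1$ if $A=\varnothing$ and likewise for $h_B$. Unwinding the definitions, conditions $(1)$--$(3)$ become the congruences
\[
G_m\equiv 1\pmod{h_B},\qquad G_m\equiv (Y-X)^m\pmod{h_A},\qquad G_m\equiv (Y-X)^m\pmod{h_x}
\]
in $R[Y]$, where $X\in R$ is the coordinate on the first factor $U$; here the restriction of $G_m$ to $U\times B$ is exactly the image of $G_m$ in $R[Y]/(h_B R[Y])$, and similarly for $A$ and $x$.

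First I would check that $A$, $B$ and $\{x\}$ are pairwise disjoint: $B\subseteq U$ gives $A\cap B=\varnothing$ since $A=\A^1\setminus U$, while $x\in V\subseteq U$ gives $x\notin A$ and $x\notin B$. Hence $h_A$, $h_B$, $h_x$ are pairwise coprime in $k[Y]$, and these comaximality relations persist in $R[Y]$. Set $h\defeq h_Ah_Bh_x$, monic of degree $d\defeq\deg h_A+\deg h_B+\deg h_x$ in $Y$; then $R[Y]/(h)$ is free over $R$ with basis $1,Y,\dots,Y^{d-1}$ and, by the Chinese Remainder Theorem, decomposes as $R[Y]/(h_A)\times R[Y]/(h_B)\times R[Y]/(h_x)$. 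Let $H\in R[Y]$ be the unique polynomial with $\deg_Y H<d$ satisfying
\[
H\equiv 0\pmod{h_A},\qquad H\equiv 1-(Y-X)^m\pmod{h_B},\qquad H\equiv 0\pmod{h_x},
\]
and put $G_m\defeq (Y-X)^m+H$. The three required congruences then follow at once: modulo $h_A$ and modulo $h_x$ the summand $H$ vanishes, so $G_m\equiv (Y-X)^m$; and modulo $h_B$ one has $G_m\equiv (Y-X)^m+\bigl(1-(Y-X)^m\bigr)=1$. Moreover $H$, hence $G_m$, has coefficients in $R=k[U]$ because $h$ is monic.

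It remains to treat monicity and the degree. Since $(Y-X)^m$ is monic of degree $m$ in $Y$ and $\deg_Y H<d$, the polynomial $G_m$ is monic of degree exactly $m$ in $Y$ as soon as $m\ge d$; this is precisely what ``$m\gg0$'' means here. I do not anticipate any genuine obstacle in this argument. The two points that deserve a little care are the bookkeeping identification of ``restriction to $U\times B$'' with reduction modulo $h_B$ (and likewise for $A$ and $x$), and the verification that $A$, $B$, $\{x\}$ are pairwise disjoint --- it is exactly this disjointness, which uses the hypotheses $x\in V$ and $V\subseteq U$, that makes the Chinese Remainder Theorem applicable, and it is the one geometric input of the lemma that is genuinely used.
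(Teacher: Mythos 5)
Your proof is correct and is exactly the argument the paper has in mind: the paper gives no proof of this lemma (it cites \cite[§5]{hty-inv}), but its later remark explains that all these polynomials are obtained from the Chinese remainder theorem via the surjection $k[U\times\A^1]\to k[U\times A]\oplus k[U\times B]$ (plus the factor $k[U\times x]$), with monicity handled by taking $m$ at least the degree of the modulus — precisely your construction. The two points you flag (pairwise disjointness of $A$, $B$, $\{x\}$, and identifying restriction with reduction modulo the monic generators) are indeed the only things to check, and you check them correctly.
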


\begin{remark}
The above polynomials, as well as those in Sections \ref{section:Zar-inj} and \ref{section:Zar-surj}, are all constructed using variants of the Chinese remainder theorem, allowing us to find polynomials with specified behavior at given subschemes. The requirement that the desired polynomial be monic can be thought of as specifying its behavior at infinity. For example, the Chinese remainder theorem establishes a surjection $k[U\times\A^1]\to k[U\times A]\oplus k[U\times B]$, from which we can deduce \Cref{lemma:CRT1}.
\end{remark}

\begin{lemma}
Let $D_{G_m}$ be the divisor on $U\times U$ defined by $G_m$, and let $\phi_m\colon\calO(D_{G_m})\cong \omega_U$ be the isomorphism determined by choosing the generator $dY$ for $\omega_U$. Then 
\[\wt\Div((1\times i)^*D_{G_m},(1\times i)^*\phi_m)\in\wt\Cor_k(U,V).\]
\end{lemma}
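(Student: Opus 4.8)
The plan is to derive the statement directly from the divisor-to-correspondence formalism of \Cref{section:cartier}: I would check that $D_{G_m}$ satisfies the hypotheses of \Cref{lemma:cartier} (so that $\wt\Div(D_{G_m},\phi_m)\in\wt\Cor_k(U,U)$) and that its support is contained in $U\times V$ (so that \Cref{lemma:cartier-open} descends it along $i\colon V\to U$). First, $D_{G_m}=\calV(G_m)$ is an effective Cartier divisor on the integral scheme $U\times U$ since $G_m$ is a single nonzero polynomial; moreover $\calO(D_{G_m})$ is globally trivial with generator $G_m^{-1}$ while $\omega_U=\omega_{U\times U/U}$ is globally trivial with generator $dY$, so $\phi_m\colon G_m^{-1}\mapsto dY$ is a genuine isomorphism $\calO(D_{G_m})\cong\omega_U$.

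The substantive point is the location of the support. Since $G_m$ is monic of degree $m\ge 1$ in $Y$, the quotient $k[U\times\A^1]/(G_m)$ is a free $k[U]$-module with basis $1,Y,\dots,Y^{m-1}$, so $\calV(G_m)\to U$ is finite and surjective. Property $(2)$ of \Cref{lemma:CRT1} says $G_m$ restricts to $(Y-X)^m$ on $U\times A$, which is invertible there because $A=\A^1\setminus U$ is disjoint from $U$; hence $|D_{G_m}|$ misses $U\times A$, so $|D_{G_m}|\subseteq U\times U$ and is closed, finite, and surjective over $U$. Property $(1)$ gives $G_m|_{U\times B}=1$, so $|D_{G_m}|$ also misses $U\times B$; as $V=U\setminus B$, this yields $|D_{G_m}|\subseteq U\times V$.

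Granting these, \Cref{lemma:cartier} (with $X=Y=U$, $D=D_{G_m}$ and $\chi=\phi_m$) gives $\wt\Div(D_{G_m},\phi_m)\in\wt\Cor_k(U,U)$, and \Cref{lemma:cartier-open}, applied to the open immersion $i\colon V\to U$, then produces the finite $MW$-correspondence $\wt\Div((1\times i)^*D_{G_m},(1\times i)^*\phi_m)\in\wt\Cor_k(U,V)$, uniquely characterized by $i\circ\wt\Div((1\times i)^*D_{G_m},(1\times i)^*\phi_m)=\wt\Div(D_{G_m},\phi_m)$. This is exactly the assertion.

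I do not expect a genuine obstacle here: the lemma is a compatibility check, and all of it comes down to feeding properties $(1)$ and $(2)$ of \Cref{lemma:CRT1} into Lemmas \ref{lemma:cartier} and \ref{lemma:cartier-open}. The only step deserving a line of care is the admissibility of $|D_{G_m}|$ over $U$, i.e., that monicity of $G_m$ forces finiteness and (componentwise) surjectivity over $U$ and that properties $(1)$ and $(2)$ confine the support to $U\times V$; but this is routine.
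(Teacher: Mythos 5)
Your proof is correct and follows essentially the same route as the paper: monicity of $G_m$ in $Y$ gives finiteness and surjectivity of $\calV(G_m)$ over $U$ so that \Cref{lemma:cartier} applies, and properties $(1)$ and $(2)$ of \Cref{lemma:CRT1} (the latter combined with the absence of diagonal points in $U\times A$) confine the support to $U\times V$, whence \Cref{lemma:cartier-open} descends the correspondence to $\wt\Cor_k(U,V)$. No gaps.
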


\begin{proof}
Since $G_m$ is monic in $Y$,  the support $\calV(G_m)$ of $D_{G_m}$ is finite and surjective over $U$ by \Cref{lemma:monic-fin-surj}. Using the trivializations of $\calO(D_{G_m})$ and of $\omega_U$, \Cref{lemma:cartier} implies that $\wt\Div(D_{G_m},\phi_m)\in\wt\Cor_k(U,U)$. Now, the polynomial $G_m$ satisfies the following:
\begin{itemize}
\item $G_m|_{U\times A}\in k[U\times A]^\times$. This follows from the fact that $U\times A=U\times(\A^1\setminus U)$ contains no diagonal points.
\item $G_m|_{U\times B}\in k[U\times B]^\times$. This is obvious, as $G_m|_{U\times B}=1$.
\end{itemize}
The above properties imply that $\calV(G_m)\subseteq U\times V$. Hence the claim follows from \Cref{lemma:cartier-open}.  
\end{proof}

\begin{definition}
For $m\gg0$, we define $\Phi_m\defeq\wt\Div\p*{(1\times i)^*D_{G_m},(1\times i)^*\phi_m}\in\wt\Cor_k(U,V)$.
\end{definition}

We now aim to define a homotopy $\scrH_m\colon i\circ \Phi_m\sim_{\A^1}\Delta_m$. Consider the product $\A^1\times U\times\A^1$, where $\theta$ is the coordinate of the first copy of $\A^1$, $U$ has coordinate $X$ and the last $\A^1$ has coordinate $Y$. Let $H_\theta\in k[\A^1\times U\times\A^1]$ be the polynomial
\[
H_\theta(Y)\defeq \theta G_m+(1-\theta)(Y-X)^m.
\]
Since $U\times A$ contains no diagonal points, the restriction $G_m(Y)|_{U\times A}=(Y-X)^m|_{U\times A}$ does not vanish on $U\times A$. It follows that 
\[
H_\theta(Y)|_{\A^1\times U\times A}=(Y-X)^m|_{\A^1\times U\times A}\in k[\A^1\times U\times A]^\times.
\]
Hence $\calV(H_\theta)\subseteq\A^1\times U\times U$. Let $D_{H_\theta}$ be the principal Cartier divisor on $\A^1\times U\times U$ defined by $H_\theta$, and let $\psi\colon\calO(D_{H_\theta})\cong \omega_U$ be the isomorphism given by choosing the generator $dY$ for $\omega_U$.

\begin{lemma}
Let $\scrH_m\defeq \wt\Div(D_{H_\theta},\psi)$. Then $\scrH_m\in\wt\Cor_k(\A^1\times U,U)$. 
\end{lemma}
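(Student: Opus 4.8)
The plan is to obtain this as a direct application of \Cref{lemma:cartier}, taking $\A^1\times U$ in place of $X$ and $U$ in place of $Y$. The hypotheses on the two schemes are immediate: $U$ is a smooth connected $k$-scheme of dimension $1$, and $\A^1\times U$ is smooth and connected (in fact integral, being the spectrum of $k[\theta]\otimes_k k[U]$ with $k[U]$ a domain). What remains to check is that $D_{H_\theta}$ is a genuine Cartier divisor on $(\A^1\times U)\times U$ whose support $\calV(H_\theta)$ is finite and surjective over $\A^1\times U$, and that the proposed $\psi$ really is an isomorphism $\calO(D_{H_\theta})\cong\omega_U$.

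The observation that makes everything work is that, although $H_\theta=\theta G_m+(1-\theta)(Y-X)^m$ is a ``homotopy'' interpolating between $G_m$ and $(Y-X)^m$, it is again monic of degree $m$ in the variable $Y$: both leading coefficients equal $1$, and $\theta\cdot 1+(1-\theta)\cdot 1=1$. Two consequences follow. First, $H_\theta$ is a nonzero element of the domain $k[\A^1\times U\times\A^1]$, hence a nonzerodivisor, so it defines an effective principal Cartier divisor there; as we already know that $\calV(H_\theta)\subseteq\A^1\times U\times U$, restricting to this open subscheme gives a Cartier divisor $D_{H_\theta}$ on $(\A^1\times U)\times U$ with support exactly $\calV(H_\theta)$. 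Second, $k[\A^1\times U\times\A^1]/(H_\theta)$ is free over $k[\A^1\times U]$ with basis $1,Y,\dots,Y^{m-1}$, so $\calV(H_\theta)\to\A^1\times U$ is finite and flat of rank $m\ge1$; it is therefore closed (finite) and dominant (injective on coordinate rings), hence surjective onto the irreducible scheme $\A^1\times U$. Finally, the isomorphism $\psi$ is simply the one sending the trivializing section $H_\theta^{-1}$ of $\calO(D_{H_\theta})$ to the generator $dY$ of $\omega_U=p_U^*\omega_{U/k}$.

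Granting these points, \Cref{lemma:cartier} applies and yields $\scrH_m=\wt\Div(D_{H_\theta},\psi)\in\wt\Cor_k(\A^1\times U,U)$, as desired. I do not anticipate a genuine obstacle: this is a routine instance of the Cartier-divisor-to-correspondence construction of \Cref{section:cartier}, and---in contrast to the analogous lemma defining $\Phi_m$---no appeal to \Cref{lemma:cartier-open} is needed, since here the target is all of $U$ rather than a proper open subscheme. The only points demanding a little care are that monicity in $Y$ is preserved under forming the convex combination $H_\theta$ (so that the support stays finite over the base), and that $\calV(H_\theta)$ really lands inside $\A^1\times U\times U$, which was arranged in the discussion immediately preceding the statement.
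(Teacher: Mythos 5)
Your proposal is correct and follows exactly the paper's argument: the key point is that $H_\theta$, being a convex combination of two polynomials monic of degree $m$ in $Y$, is itself monic of degree $m$ in $Y$, so $\calV(H_\theta)$ is finite and surjective over $\A^1\times U$, and \Cref{lemma:cartier} applies. The extra details you supply (freeness of $k[\A^1\times U\times\A^1]/(H_\theta)$ over the base, and the containment $\calV(H_\theta)\subseteq\A^1\times U\times U$ arranged beforehand) are exactly the points the paper relies on.
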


\begin{proof}
As $G_m$ is monic and of degree $m$ in $Y$, it follows that the linear combination $H_\theta$ of $G_m$ and $(Y-X)^m$ is also monic and of degree $m$ in $Y$. Therefore the support $\calV(H_\theta)$ of $D_{H_\theta}$ is finite and surjective over $\A^1\times U$ by \Cref{lemma:monic-fin-surj}. The result then follows from \Cref{lemma:cartier}. 
\end{proof}

\begin{lemma}\label{lemma:homotopy-start-end}
Let $\scrH_m|_0\defeq\scrH_m\circ i_0,\scrH_m|_1\defeq\scrH_m\circ i_1\in\wt\Cor_k(U,U)$ denote the respective precompositions of $\scrH_m\in\wt\Cor_k(\A^1\times U,U)$ with the rational points $i_0,i_1\colon U\to \A^1\times U$. Then $\scrH_m|_0=\Delta_m$ and $\scrH_m|_1=i\circ\Phi_m$.
\end{lemma}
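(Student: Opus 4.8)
The plan is to deduce both identities from the functoriality of the $\wt\Div$ construction established in \Cref{lemma:cartier-functoriality} and \Cref{lemma:cartier-open}. The key observation is that $\scrH_m=\wt\Div(D_{H_\theta},\psi)$ is assembled from the one-parameter family $H_\theta(Y)=\theta G_m+(1-\theta)(Y-X)^m$, and that precomposing $\scrH_m$ with the sections $i_0,i_1\colon U\to\A^1\times U$ amounts to specializing $\theta$ to $0$ and to $1$ respectively.

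First I would treat $\scrH_m|_0$. Precomposition of $\scrH_m\in\wt\Cor_k(\A^1\times U,U)$ with $i_0$ corresponds, on Chow--Witt groups with support, to pullback along $i_0\times1\colon U\times U\to\A^1\times U\times U$, so \Cref{lemma:cartier-functoriality} gives $\scrH_m\circ i_0=\wt\Div\p*{(i_0\times1)^*D_{H_\theta},(i_0\times1)^*\psi}$. Since $(i_0\times1)^*H_\theta=H_\theta|_{\theta=0}=(Y-X)^m=f_m$, the pulled-back divisor is exactly $D_m$; and because $\psi$ and $\chi_m$ are both the trivialization of the relevant line bundle determined by the generator $dY$ of $\omega_U$, we get $(i_0\times1)^*\psi=\chi_m$. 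Hence $\scrH_m|_0=\wt\Div(D_m,\chi_m)=\Delta_m$ by \Cref{def:thick-diagonal}. The same computation with $i_1$ in place of $i_0$ uses $(i_1\times1)^*H_\theta=H_\theta|_{\theta=1}=G_m$ and $(i_1\times1)^*\psi=\phi_m$ to yield $\scrH_m|_1=\wt\Div(D_{G_m},\phi_m)$.

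It then remains to identify $\wt\Div(D_{G_m},\phi_m)$ with $i\circ\Phi_m$. When $\Phi_m$ was defined we already checked that $\calV(G_m)\subseteq U\times V$, so \Cref{lemma:cartier-open} applies with $j=i\colon V\to U$ and produces a unique $\beta\in\wt\Cor_k(U,V)$ with $i\circ\beta=\wt\Div(D_{G_m},\phi_m)$, namely $\beta=\wt\Div\p*{(1\times i)^*D_{G_m},(1\times i)^*\phi_m}=\Phi_m$. Therefore $\scrH_m|_1=i\circ\Phi_m$, as desired. I do not anticipate a genuine obstacle here: the argument is purely formal once \Cref{lemma:cartier-functoriality} and \Cref{lemma:cartier-open} are in hand, and the only point requiring a little care is the bookkeeping of the three trivializations, which is harmless precisely because $\psi$, $\chi_m$, and $\phi_m$ are all specified by the single local generator $dY$ of $\omega_U$.
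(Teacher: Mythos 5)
Your argument is correct and is essentially the paper's own proof: apply \Cref{lemma:cartier-functoriality} to compute the restrictions at $\theta=0,1$, identify the specialized divisors with $D_m$ and $D_{G_m}$, and invoke \Cref{lemma:cartier-open} to rewrite $\wt\Div(D_{G_m},\phi_m)$ as $i\circ\Phi_m$. Your extra remark that all three trivializations are given by the generator $dY$ is exactly the bookkeeping the paper leaves implicit.
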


\begin{proof}
By \Cref{lemma:cartier-functoriality} we have 
\[
\scrH_0=\wt\Div((i_0\times1)^*D_{H_\theta},(i_0\times1)^*\psi)=\wt\Div(D_m,\chi_m)=\Delta_m.
\]
On the other hand,
\[
\scrH_1=\wt\Div((i_1\times1)^*D_{H_\theta},(i_0\times1)^*\psi)=\wt\Div(D_{G_m},\phi_m)=i\circ\Phi_m
\]by \Cref{lemma:cartier-open}.
\end{proof}

We are now ready to prove the injectivity of the induced morphism $i^*\colon \scrF(U)\to \scrF(V)$, for any homotopy invariant $\scrF\in\wt\PSh(k)$.

\begin{proof}[Proof of \Cref{thm:inj-aff-line}]
Let $m\gg0$ be an integer large enough so that the polynomial $G_m$ of \Cref{lemma:CRT1} exists. If $\Phi\defeq\Phi_{2m+1}-\Phi_{2m}$, we then have $i\circ\Phi\sim_{\A^1}(\Delta_{2m+1}-\Delta_{2m})=\ip{(-1)^{2m}}\id_U=\id_U$ by \Cref{lemma:difference}. As $\scrF$ is homotopy invariant, this yields $\Phi^*\circ i^*=\id_{\scrF(U)}$, hence $i^*$ is injective.
\end{proof}

\section{Injectivity of Zariski excision}\label{section:Zar-inj}
We wish to extend \Cref{thm:inj-aff-line} to the category of pairs $\wt\Cor_k^\pair$---in other words to produce a morphism $(\Phi_m,\Phi_m^x)\in\wt\Cor_k^\pair((U,U\setminus x),(V,V\setminus x))$ and a homotopy $(\scrH_m,\scrH_m^x)\in\wt\Cor_k^\pair(\A^1\times (U,U\setminus x),(U,U\setminus x))$ from $\Delta_m$ to $(i,i|_{V\setminus x})\circ(\Phi_m,\Phi_m^x)$. This establishes \Cref{thm:Zar-inj}. 

Let $j_U$ and $j_V$ denote the respective open immersions $j_U\colon U\setminus x\to U$ and $j_V\colon V\setminus x\to V$.

\begin{lemma}\label{lemma:pair}
Let $\Phi_m^x\defeq \wt\Div((j_U\times j_V)^*D_{G_m},(j_U\times j_V)^*\phi_m)$. Then $(\Phi_m,\Phi_m^x)$ constitutes a morphism in $\wt\Cor_k^\pair$ from $(U,U\setminus x)$ to $(V,V\setminus x)$.
\end{lemma}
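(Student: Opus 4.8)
The plan is to obtain this as a direct application of \Cref{lemma:cartier-pair}, the only point requiring genuine attention being the support hypothesis of that lemma. Recall that, by construction, $\Phi_m=\wt\Div((1\times i)^*D_{G_m},(1\times i)^*\phi_m)$. Put $D\defeq(1\times i)^*D_{G_m}$, a principal Cartier divisor on $U\times V$, and $\chi\defeq(1\times i)^*\phi_m\colon\calO_{U\times V}(D)\xrightarrow{\cong}\omega_V$. Since $G_m$ is monic in $Y$, so is its restriction along $1\times i$; hence $D$ is Cartier with support $\calV(G_m)$ finite and surjective over $U$, while $(1\times i)^*\omega_U\cong\omega_V$ because $i$ is an open immersion. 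Thus $(D,\chi)$ satisfies the hypotheses of \Cref{lemma:cartier}, and $\wt\Div(D,\chi)=\Phi_m\in\wt\Cor_k(U,V)$.

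Next I would invoke \Cref{lemma:cartier-pair} for the divisor $(D,\chi)$ on $U\times V$ together with the open immersions $j_U\colon U\setminus x\to U$ and $j_V\colon V\setminus x\to V$. Its conclusion, once the hypothesis on the restricted support is verified, is exactly that the pair $(\wt\Div(D,\chi),\wt\Div((j_U\times j_V)^*D,(j_U\times j_V)^*\chi))$ lies in $\wt\Cor_k^\pair((U,U\setminus x),(V,V\setminus x))$. Since $(j_U\times j_V)^*D=(j_U\times j_V)^*(1\times i)^*D_{G_m}$ is the pullback of $D_{G_m}$ along the composite $(U\setminus x)\times(V\setminus x)\to U\times V\to U\times U$, and likewise for $\chi$, the second component of this pair is precisely $\Phi_m^x$ as defined in the statement. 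So it remains only to check that the restriction $D'\defeq D|_{(U\setminus x)\times V}$ satisfies $|D'|\subseteq(U\setminus x)\times(V\setminus x)$.

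This is where property $(3)$ of \Cref{lemma:CRT1} enters. We already know $\calV(G_m)\subseteq U\times V$ (from the lemma preceding the definition of $\Phi_m$), so it suffices to check that no point of $\calV(G_m)$ lying over $U\setminus x$ has target coordinate equal to $x$, i.e.\ that $\calV(G_m)\cap\big((U\setminus x)\times\{x\}\big)=\varnothing$. Restricting the target coordinate $Y$ to the point $x$, property $(3)$ gives $G_m|_{Y=x}=(Y-X)^m|_{Y=x}=(x-X)^m$ as a function on the base $U$, whose zero locus is the single closed point $x$. Hence $\calV(G_m)\cap(U\times\{x\})=\{(x,x)\}$, which meets $(U\setminus x)\times\{x\}$ trivially, so $|D'|\subseteq(U\setminus x)\times(V\setminus x)$ and \Cref{lemma:cartier-pair} applies. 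I do not anticipate any real obstacle: the argument is essentially bookkeeping, and the one substantive point — keeping the restricted support off $(U\setminus x)\times\{x\}$ — is exactly what clause $(3)$ of \Cref{lemma:CRT1} was arranged to guarantee.
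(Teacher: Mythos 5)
Your proposal is correct and follows the same route as the paper: reduce via \Cref{lemma:cartier-pair} to checking that the restricted support avoids $(U\setminus x)\times x$, and verify this using condition $(3)$ of \Cref{lemma:CRT1} together with the fact that $(Y-X)^m$ only vanishes on the diagonal (so your phrase ``whose zero locus is the single closed point $x$'' should really be read as saying the zero locus lies over the diagonal point above $x$, which is what matters). The only difference is cosmetic bookkeeping in how you feed the divisor into \Cref{lemma:cartier-pair}.
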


\begin{proof}
By \Cref{lemma:cartier-pair}, it suffices to show that the support of $(j_U\times 1)^*D_{G_m}$ is contained in $(U\setminus x)\times(V\setminus x)$. As we already know that $\calV(G_m)\cap((U\setminus x)\times\A^1)\subseteq (U\setminus x)\times V$, it is enough to check that $G_m$ does not vanish on $(U\setminus x)\times x$. By condition $(3)$ of \Cref{lemma:CRT1}, $G_m(Y)|_{U\times x}=(Y-X)^m|_{U\times x}$. As $(U\setminus x)\times x$ contains no diagonal points, it therefore follows that $G_m|_{(U\setminus x)\times x}\in k[(U\setminus x)\times x]^\times$. Hence $\calV(G_m)\cap((U\setminus x)\times\A^1)\subseteq(U\setminus x)\times(V\setminus x)$.
\end{proof}

\begin{lemma}\label{lemma:image}
Let $\scrH_\theta^x\defeq\wt\Div(((1\times j_U)\times j_U)^*D_{H_\theta},((1\times j_U)\times j_U)^*\psi)$. Then
\[
(\scrH_\theta,\scrH_\theta^x)\in\wt\Cor_k^\pair(\A^1\times (U,U\setminus x),(U,U\setminus x)).
\]
\end{lemma}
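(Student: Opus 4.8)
The statement to prove is \Cref{lemma:image}: that the pair $(\scrH_\theta,\scrH_\theta^x)$ defined via the restricted Cartier divisor $((1\times j_U)\times j_U)^*D_{H_\theta}$ assembles into a morphism of pairs from $\A^1\times(U,U\setminus x)$ to $(U,U\setminus x)$. The natural approach is to invoke \Cref{lemma:cartier-pair}, exactly as was done for $(\Phi_m,\Phi_m^x)$ in \Cref{lemma:pair}. Applying \Cref{lemma:cartier-pair} with the divisor $D_{H_\theta}$ on $(\A^1\times U)\times U$, the open subschemes $\A^1\times(U\setminus x)\hookrightarrow \A^1\times U$ (source) and $U\setminus x\hookrightarrow U$ (target), reduces the claim to a single geometric verification: that the support of the restricted divisor $D_{H_\theta}|_{(\A^1\times(U\setminus x))\times U}$ is contained in $(\A^1\times(U\setminus x))\times(U\setminus x)$, equivalently that $H_\theta$ does not vanish on $\A^1\times(U\setminus x)\times x$. (One also uses that $\calV(H_\theta)$ is already known to lie in $\A^1\times U\times U$ from the discussion preceding the lemma defining $\scrH_m$, so the only escape route for the support is through the slice $\cdots\times x$.)

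**Key steps.** First I would set up the notation so that $D_{H_\theta}$ plays the role of "$D$" in \Cref{lemma:cartier-pair}, with $j_X = 1\times j_U\colon \A^1\times(U\setminus x)\to\A^1\times U$ and $j_Y = j_U\colon U\setminus x\to U$; note that $\scrH_\theta^x$ is then literally $\wt\Div((j_X\times j_Y)^*D_{H_\theta},(j_X\times j_Y)^*\psi)$, matching the hypothesis of that lemma. Second, recall that $H_\theta(Y) = \theta G_m + (1-\theta)(Y-X)^m$, so $H_\theta|_{\A^1\times U\times x}(Y) = \theta\, G_m|_{U\times x} + (1-\theta)(Y-X)^m|_{U\times x}$. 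By condition $(3)$ of \Cref{lemma:CRT1}, $G_m|_{U\times x} = (Y-X)^m|_{U\times x}$, so the two terms coincide and $H_\theta|_{\A^1\times U\times x} = (Y-X)^m|_{\A^1\times U\times x}$. Third, observe that $(U\setminus x)\times x$ contains no diagonal points (since $x\notin U\setminus x$), so $(Y-X)^m$ restricted to $\A^1\times(U\setminus x)\times x$ is a unit; hence $H_\theta$ is a unit there, so $\calV(H_\theta)\cap(\A^1\times(U\setminus x)\times x)=\varnothing$. Combined with the already-established containment $\calV(H_\theta)\subseteq\A^1\times U\times U$, this gives $\calV(H_\theta)\cap(\A^1\times(U\setminus x)\times\A^1)\subseteq\A^1\times(U\setminus x)\times(U\setminus x)$, which is exactly the hypothesis "$\vert D'\vert\subseteq U\times V$" of \Cref{lemma:cartier-pair} in this situation. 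Applying that lemma concludes the proof.

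**Main obstacle.** This lemma is essentially bookkeeping — the only nontrivial content is the unit computation on the slice $\times x$, which is immediate from condition $(3)$ of \Cref{lemma:CRT1} together with the "no diagonal points" observation, mirroring the argument of \Cref{lemma:pair} almost verbatim but with the extra $\A^1$-parameter $\theta$ carried along. The one point that requires a moment's care is making sure the open subscheme of the source used in \Cref{lemma:cartier-pair} is $\A^1\times(U\setminus x)$ rather than something like $(\A^1\times U)\setminus(\text{something})$, i.e. that the restriction $D_{H_\theta}|_{(\A^1\times(U\setminus x))\times U}$ is what "$D'$" denotes here; once the indexing is pinned down correctly, the rest is automatic.

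\begin{proof}
We apply \Cref{lemma:cartier-pair} to the Cartier divisor $D_{H_\theta}$ on $(\A^1\times U)\times U$, with source open immersion $1\times j_U\colon\A^1\times(U\setminus x)\to\A^1\times U$ and target open immersion $j_U\colon U\setminus x\to U$. The associated restricted divisor is $D'\defeq D_{H_\theta}|_{(\A^1\times(U\setminus x))\times U}$, and by \Cref{lemma:cartier-pair} it suffices to check that $\vert D'\vert\subseteq(\A^1\times(U\setminus x))\times(U\setminus x)$.

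Recall from the discussion preceding the definition of $\scrH_m$ that $\calV(H_\theta)\subseteq\A^1\times U\times U$; hence it is enough to verify that $H_\theta$ does not vanish on $\A^1\times(U\setminus x)\times x$. Since
\[
H_\theta(Y)=\theta G_m+(1-\theta)(Y-X)^m,
\]
restricting to $\A^1\times U\times x$ and using condition $(3)$ of \Cref{lemma:CRT1}, namely $G_m(Y)|_{U\times x}=(Y-X)^m|_{U\times x}$, we obtain
\[
H_\theta(Y)|_{\A^1\times U\times x}=\theta(Y-X)^m|_{\A^1\times U\times x}+(1-\theta)(Y-X)^m|_{\A^1\times U\times x}=(Y-X)^m|_{\A^1\times U\times x}.
\]
As $(U\setminus x)\times x$ contains no diagonal points, $(Y-X)^m$ restricts to a unit on $\A^1\times(U\setminus x)\times x$, whence $H_\theta|_{\A^1\times(U\setminus x)\times x}\in k[\A^1\times(U\setminus x)\times x]^\times$ and $\calV(H_\theta)\cap(\A^1\times(U\setminus x)\times x)=\varnothing$.

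Combining this with $\calV(H_\theta)\subseteq\A^1\times U\times U$ gives
\[
\calV(H_\theta)\cap\bigl((\A^1\times(U\setminus x))\times\A^1\bigr)\subseteq(\A^1\times(U\setminus x))\times(U\setminus x),
\]
i.e.\ $\vert D'\vert\subseteq(\A^1\times(U\setminus x))\times(U\setminus x)$. Therefore \Cref{lemma:cartier-pair} applies, and since $\scrH_\theta^x=\wt\Div\bigl(((1\times j_U)\times j_U)^*D_{H_\theta},((1\times j_U)\times j_U)^*\psi\bigr)$ is precisely the restricted divisor correspondence appearing there, we conclude that $(\scrH_\theta,\scrH_\theta^x)\in\wt\Cor_k^\pair(\A^1\times(U,U\setminus x),(U,U\setminus x))$.
\end{proof}
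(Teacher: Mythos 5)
Your proof is correct and follows the paper's argument essentially verbatim: both reduce via \Cref{lemma:cartier-pair} to showing that $H_\theta$ does not vanish on $\A^1\times(U\setminus x)\times x$, and both verify this by using condition $(3)$ of \Cref{lemma:CRT1} to identify $H_\theta$ there with $(Y-X)^m$, which is a unit since $(U\setminus x)\times x$ contains no diagonal points. Your write-up is just slightly more explicit about how the data matches the hypotheses of \Cref{lemma:cartier-pair}.
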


\begin{proof}
In light of \Cref{lemma:cartier-pair}, it remains to check that 
\[
\calV(H_\theta)\cap(\A^1\times(U\setminus x)\times\A^1)\subseteq\A^1\times(U\setminus x)\times(U\setminus x).
\]
It is sufficient to show that $H_\theta$ does not vanish on $\A^1\times(U\setminus x)\times x$. But
\[
H_\theta(Y)|_{\A^1\times(U\setminus x)\times x}=\theta\cdot (Y-X)^m+(1-\theta)\cdot(Y-X)^m=(Y-X)^m|_{\A^1\times(U\setminus x)\times x},
\]
and $(Y-X)^m|_{\A^1\times(U\setminus x)\times x}\in k[\A^1\times(U\setminus x)\times x]^\times$ as $(U\setminus x)\times x$ contains no diagonal points. Whence the claim.
\end{proof}

\begin{proof}[Proof of \Cref{thm:Zar-inj}]
By a similar argument as in the proof of \Cref{lemma:homotopy-start-end}, $(\scrH_\theta,\scrH_\theta^x)$ is a homotopy from $\Delta_m$ to $(i,i|_{V\setminus x})\circ(\Phi_m,\Phi_m^x)$. Thus the same proof as that of \Cref{thm:inj-aff-line} applies.
\end{proof}

\section{Surjectivity of Zariski excision}\label{section:Zar-surj}
We proceed to prove \Cref{thm:Zar-surj}. To begin with, we interpolate polynomials in a similar fashion as \Cref{lemma:CRT1}:

\begin{lemma}[\protect{\cite[§5]{hty-inv}}]\label{lemma:CRT2}
For $m\gg0$ there exists a polynomial $G_m(Y)\in k[U][Y]=k[U\times\A^1]$, monic and of degree $m$ in $Y$, satisfying the following properties:
\begin{itemize}
\item[$(1')$]$G_m(Y)|_{U\times B}=1.$
\item[$(2')$]$G_m(Y)|_{U\times A}=(Y-X)|_{U\times A}$. 
\item[$(3')$] $G_m(Y)|_{U\times x}=(Y-X)|_{U\times x}$.
\end{itemize}
\end{lemma}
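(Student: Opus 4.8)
The plan is to build $G_m$ by a Chinese Remainder Theorem argument in $k[U][Y]$, following the same template as the proof of \Cref{lemma:CRT1}; the two statements differ only in that the value prescribed along $U\times A$ and $U\times x$ is now $(Y-X)$ rather than $(Y-X)^m$. First I would fix notation. Write $\A^1=\Spec k[t]$, so that $U=\Spec k[t,a(t)^{-1}]$ with $a\in k[t]$ the squarefree monic polynomial cutting out $A=V(a)$. Since $V$ is nonempty (it contains $x$), the set $B=U\setminus V$ is a finite set of closed points of $U$; being finite it is closed in $\A^1$, and it is disjoint from $A$, so it is the vanishing locus of a squarefree monic $b\in k[t]$ with $\gcd(a,b)=1$. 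Finally $x=V(p)$ for an irreducible monic $p\in k[t]$, and $p$ is coprime to both $a$ and $b$ since $x\in V\subseteq U$. Regarding $a,b,p$ as elements of $k[U][Y]=k[U\times\A^1]$ via $t\mapsto Y$, the closed subschemes $U\times A$, $U\times B$, $U\times x$ of $U\times\A^1$ are cut out by $a(Y)$, $b(Y)$, $p(Y)$ respectively, and so conditions $(1')$, $(2')$, $(3')$ say precisely that
\[
G_m\equiv 1\pmod{b(Y)},\qquad G_m\equiv Y-X\pmod{a(Y)},\qquad G_m\equiv Y-X\pmod{p(Y)}
\]
in $k[U][Y]$. (If $U=\A^1$, resp. $U=V$, then $a$, resp. $b$, is a unit and the corresponding condition is vacuous.)

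The remaining step is routine interpolation. Put $c(Y)\defeq a(Y)b(Y)p(Y)$, a monic polynomial of $Y$-degree $d\defeq\deg_Y c$. Since $a(Y),b(Y),p(Y)$ are pairwise coprime in $k[U][Y]$ — the Bézout relations witnessing this already live in $k[t]\subseteq k[U][Y]$ — the Chinese Remainder Theorem identifies $k[U][Y]/(c(Y))$ with the product $k[U][Y]/(b(Y))\times k[U][Y]/(a(Y))\times k[U][Y]/(p(Y))$, so there is a class $\overline G$ in it mapping to $1$, $Y-X$, $Y-X$ in the three factors. Let $G\in k[U][Y]$ be the representative of $\overline G$ obtained by Euclidean division by the monic $c(Y)$; then $\deg_Y G<d$ and $G$ satisfies the three displayed congruences. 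For any $m\ge d$ I would then set
\[
G_m\defeq G+c(Y)\,Y^{\,m-d}.
\]
Since $c$ is monic of $Y$-degree $d$, the summand $c(Y)Y^{m-d}$ is monic of $Y$-degree $m$, whereas $\deg_Y G<d\le m$; hence $G_m$ is monic of degree $m$ in $Y$. Moreover $G_m\equiv G\pmod{c(Y)}$, hence $G_m\equiv G$ modulo each of $a(Y),b(Y),p(Y)$, so $G_m$ still satisfies $(1')$, $(2')$, $(3')$. This proves the lemma for every $m\ge d$, in particular for $m\gg0$.

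The argument involves no real obstacle — it is the standard interpolation trick, and is carried out in \cite[§5]{hty-inv} — but two points deserve a line of care: the pairwise coprimality of $a,b,p$, which is exactly where the hypotheses $A\cap B=\varnothing$ and $x\in V$ enter, and the requirement that the Chinese Remainder representative $G$ be chosen of $Y$-degree strictly below $\deg c$, so that adding $c(Y)Y^{m-d}$ produces a polynomial that is monic of degree exactly $m$.
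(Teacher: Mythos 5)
Your argument is correct and is exactly the approach the paper intends: the paper gives no proof of \Cref{lemma:CRT2}, citing \cite[§5]{hty-inv} instead, and its later remark indicates precisely this Chinese-remainder interpolation (with monicity viewed as prescribing the behavior at infinity). Your write-up supplies the details accurately, including the two points that genuinely need care — the pairwise coprimality of $a(Y)$, $b(Y)$, $p(Y)$ coming from $A\cap B=\varnothing$ and $x\in V$, and the choice of a CRT representative of $Y$-degree below $\deg_Y c$ so that adding $c(Y)Y^{m-d}$ yields a monic polynomial of degree exactly $m$.
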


\begin{lemma}[\protect{\cite[§5]{hty-inv}}]\label{lemma:CRT3}
For $m\gg0$ there exists a polynomial $F_{m-1}(Y)\in k[V][Y]=k[V\times\A^1]$, monic and of degree $m-1$ in $Y$, satisfying the following properties:
\begin{itemize}
\item[$(1'')$]$F_{m-1}(Y)|_{V\times B}=(Y-X)^{-1}\in k[V\times B]^\times.$
\item[$(2'')$]$F_{m-1}(Y)|_{V\times A}=1$. 
\item[$(3'')$]$F_{m-1}(Y)|_{\Delta(V)}=1$.
\end{itemize}
\end{lemma}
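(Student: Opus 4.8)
The lemma is a strong‑approximation (Chinese Remainder) statement, and I would prove it entirely inside the ring $k[V][Y] = k[V\times\A^1]$, mirroring \Cref{lemma:CRT1} and \Cref{lemma:CRT2}. First I would fix the geometry. Since $V$ is a nonempty Zariski open in the affine line, $\A^1\setminus V$ is finite, so $A\defeq\A^1\setminus U$ and $B\defeq U\setminus V$ are finite reduced closed subschemes of $\A^1$, cut out by monic polynomials $a(Y),b(Y)\in k[Y]\subseteq k[V][Y]$ in the coordinate $Y$ of the second factor. These are pairwise disjoint and all disjoint from $V$: indeed $A\subseteq\A^1\setminus U$, $B\subseteq U$, and $V = U\setminus B$. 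Translated into $V\times\A^1 = \Spec k[V][Y]$, this says that $V\times A = \calV(a(Y))$, $V\times B = \calV(b(Y))$ and the diagonal $\Delta(V) = \calV(Y-X)$ are pairwise disjoint, equivalently that the ideals $(a(Y))$, $(b(Y))$, $(Y-X)$ of $k[V][Y]$ are pairwise comaximal. For the two pairs involving the diagonal this is cleanest after reducing modulo $Y-X$: there $a(Y)$ and $b(Y)$ become $a(X),b(X)\in k[V]$, which are units because $A$ and $B$ do not meet $V$.

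Next I would solve the three congruences. Since $(b(Y))$ and $(Y-X)$ are comaximal, $Y-X$ is a unit in $k[V][Y]/(b(Y)) = k[V\times B]$, so $(Y-X)^{-1}$ is a well‑defined element there, and it lifts to some $g\in k[V][Y]$. The Chinese Remainder Theorem applied to the pairwise comaximal ideals $(a(Y))$, $(b(Y))$, $(Y-X)$ then yields $\widetilde F\in k[V][Y]$ with $\widetilde F\equiv g\pmod{b(Y)}$, $\widetilde F\equiv 1\pmod{a(Y)}$ and $\widetilde F\equiv 1\pmod{Y-X}$; read on $V\times B$, $V\times A$ and $\Delta(V)$ these are exactly $(1'')$, $(2'')$, $(3'')$. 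Dividing $\widetilde F$ with remainder by the polynomial $a(Y)b(Y)(Y-X)$, which is monic in $Y$ of degree $d\defeq\deg a+\deg b+1$, I obtain a representative $F_0$ with $\deg_Y F_0 < d$ still satisfying all three congruences.

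Finally I would correct the degree. For $m\ge d+1$ set
\[
F_{m-1}\defeq F_0 + Y^{\,m-1-d}\,a(Y)\,b(Y)\,(Y-X).
\]
The added term is monic in $Y$ of degree $m-1$ and $\deg_Y F_0 < d\le m-1$, so $F_{m-1}$ is monic of degree exactly $m-1$ in $Y$; moreover $F_{m-1}\equiv F_0\equiv\widetilde F$ modulo each of $a(Y)$, $b(Y)$ and $Y-X$, so $F_{m-1}$ inherits $(1'')$--$(3'')$. Since $d$ depends only on $U$ and $V$, this proves the lemma for all $m\gg0$.

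I do not expect a genuine obstacle here: the argument is the same classical interpolation argument underlying \Cref{lemma:CRT1} and \Cref{lemma:CRT2} (and none of the standing hypotheses on $k$ are used). The only points requiring care are bookkeeping — keeping the coordinate $Y$ on the correct copy of $\A^1$ so that $A$ and $B$ really are loci of polynomials in $Y$ alone — and verifying that $(Y-X)^{-1}$ genuinely defines an element of $k[V\times B]$ before one interpolates against it, which is exactly the comaximality of $(b(Y))$ and $(Y-X)$ established in the first step.
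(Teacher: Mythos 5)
Your argument is correct and follows exactly the route the paper indicates: the paper states this lemma with a citation to \cite[\S 5]{hty-inv} and only remarks afterwards that all these interpolation polynomials come from the Chinese remainder theorem applied to the pairwise comaximal ideals cutting out $V\times A$, $V\times B$ and $\Delta(V)$, with monicity handled as a condition at infinity. Your write-up simply supplies the details (comaximality via invertibility of $a(X)$, $b(X)$ in $k[V]$, lifting $(Y-X)^{-1}$ from $k[V\times B]$, and the division-with-remainder plus $Y^{m-1-d}a(Y)b(Y)(Y-X)$ correction to force the degree), all of which are sound.
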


\begin{remark}
As $B=U\setminus V$, the set $V\times B$ does not contain any diagonal points. Hence the function $Y-X$ is invertible on $V\times B$, so $(1'')$ makes sense.
\end{remark}

\begin{definition}
Set $E_m\defeq (Y-X)\cdot F_{m-1}\in k[V][Y]$ and $H_\theta\defeq \theta G_m+(1-\theta)E_m\in k[\A^1\times V][Y]$, where $\theta$ is the coordinate of $\A^1$.
\end{definition}
Observe that the divisor $\calV(E_m)$ satisfies $\calV(E_m)=\calV(Y-X)\cup \calV(F_{m-1})=\Delta(V)\cup\calV(F_{m-1})$. In fact, by $(3'')$, this union is a disjoint union. Moreover, using the definition of $F_{m-1}$ we see that $E_m$ enjoys the following properties:
\begin{itemize}
\item[$(1_E)$]$E_m(Y)|_{V\times B}=1=G_m(Y)|_{V\times B}$.
\item[$(2_E)$]$E_m(Y)|_{V\times A}=(Y-X)|_{V\times A}=G_m(Y)|_{V\times A}$.
\item[$(3_E)$]$E_m(Y)|_{V\times x}=(Y-X)|_{V\times x}=G_m(Y)|_{V\times x}$.
\end{itemize}
The last property $(3_E)$ implies:
\begin{itemize}
\item[$(3_E')$]$E_m(Y)|_{(V\setminus x)\times x}=G_m(Y)|_{(V\setminus x)\times x}\in k[(V\setminus x)\times x]^\times$.
\end{itemize}

Let us first construct the finite $\rmMW$-correspondence $\Psi\in\wt\Cor_k(U,V)$ using the polynomial $G_m$ of \Cref{lemma:CRT2} for $m\gg0$. By \Cref{lemma:CRT2}, $\calV(G_m)\subseteq U\times V$, and we may consider the principal divisor $D_{G_m}$ on $U\times V$ defined by $G_m$. Let $\psi\colon\calO(D_{G_m})\cong\omega_V$ be the isomorphism determined by choosing the generator $dY$ for $\omega_V$.

\begin{lemma}
Put $\Psi\defeq\wt\Div(D_{G_m},\psi)$ and $\Psi^x\defeq\wt\Div((j_U\times j_V)^*D_{G_m},(j_U\times j_V)^*\psi)$.
Then
\[
(\Psi,\Psi^x)\in\wt\Cor_k^\pair((U,U\setminus x),(V,V\setminus x)).
\]
\end{lemma}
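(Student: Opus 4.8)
The plan is to deduce this directly from \Cref{lemma:cartier-pair}, applied with $X=U$, $Y=V$, the open immersions $j_X=j_U\colon U\setminus x\to U$ and $j_Y=j_V\colon V\setminus x\to V$, the Cartier divisor $D=D_{G_m}$ on $U\times V$, and the trivialization $\chi=\psi$. First I would record that the hypotheses of \Cref{lemma:cartier} are in force: $U$ and $V$ are smooth connected over $k$ with $\dim V=1$; since $G_m$ is monic of degree $m$ in $Y$, its zero locus $\calV(G_m)$ is finite and surjective over $U$; and $\psi$ is the isomorphism $\calO_{U\times V}(D_{G_m})\cong\omega_V$ coming from the generator $dY$. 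Hence $\Psi=\wt\Div(D_{G_m},\psi)$ is a well-defined element of $\wt\Cor_k(U,V)$, and by \Cref{lemma:cartier-pair} the pair $(\Psi,\Psi^x)$ will lie in $\wt\Cor_k^\pair((U,U\setminus x),(V,V\setminus x))$ as soon as the support of the restriction $D_{G_m}|_{(U\setminus x)\times V}$ is contained in $(U\setminus x)\times(V\setminus x)$.

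So the one thing to check is this support condition. We already know $\calV(G_m)\subseteq U\times V$, so it suffices to verify that $G_m$ vanishes nowhere on the slice $(U\setminus x)\times x$; then $\calV(G_m)\cap\bigl((U\setminus x)\times V\bigr)$ misses that slice, and since $V=(V\setminus x)\cup\{x\}$, it is contained in $(U\setminus x)\times(V\setminus x)$. To see that $G_m$ restricts to a unit on $(U\setminus x)\times x$, I would use property $(3')$ of \Cref{lemma:CRT2}, which gives $G_m(Y)|_{U\times x}=(Y-X)|_{U\times x}$; on the further restriction to $(U\setminus x)\times x$ the linear form $Y-X$ is invertible, because on that slice the second coordinate is pinned to $x$ while the first avoids $x$, so the slice contains no diagonal points. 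This is exactly the computation in the proof of \Cref{lemma:pair}, with the exponent $m$ there replaced here by $1$.

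I do not anticipate any real obstacle: the statement is a bookkeeping exercise in the Cartier-divisor formalism of \Cref{section:cartier} together with the defining property of $G_m$. The only things requiring a little care are matching the coordinates on the two factors of $U\times V$ and keeping the roles of $j_U$ and $j_V$ straight when invoking \Cref{lemma:cartier-pair}.
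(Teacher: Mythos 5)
Your proposal is correct and follows essentially the same route as the paper: invoke \Cref{lemma:cartier} (via monicity of $G_m$ in $Y$) to get $\Psi\in\wt\Cor_k(U,V)$, then verify the support condition of \Cref{lemma:cartier-pair} by using property $(3')$ of \Cref{lemma:CRT2} to see that $G_m$ restricts to a unit on $(U\setminus x)\times x$ since that slice contains no diagonal points. No gaps.
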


\begin{proof}
Since $G_m$ is monic in $Y$, $\calV(G_m)$ is finite and surjective over $U$ by \Cref{lemma:monic-fin-surj}. Thus \Cref{lemma:cartier} ensures that $\Psi$ is a finite $\rmMW$-correspondence from $U$ to $V$. Moreover, as $G_m(Y)|_{U\times x}=(Y-X)|_{U\times x}$, it follows that $G_m|_{(U\setminus x)\times x}$ is invertible on $(U\setminus x)\times x$. Hence there is an inclusion 
\[\calV(G_m)\cap((U\setminus x)\times V)\subseteq(U\setminus x)\times(V\setminus x).\]
By \Cref{lemma:cartier-pair} it follows that $(\Psi,\Psi^x)$ is a morphism of pairs from $(U,U\setminus x)$ to $(V,V\setminus x)$.
\end{proof}

In order to define the desired homotopy, we proceed in a familiar fashion. By $(1_E)$ and $(2_E)$, $H_\theta$ is invertible on $\A^1\times V\times B$ and $\A^1\times V\times A$. Hence $\calV(H_\theta)\subseteq\A^1\times V\times V$, and we may consider the divisor $D_{H_\theta}$ on $\A^1\times V\times V$. We let $\chi\colon\calO(D_{H_\theta})\cong\omega_V$ be the isomorphism given by choosing the generator $dY$ for $\omega_V$.

\begin{lemma}
Let $\scrH_\theta\defeq\wt\Div(D_{H_\theta},\chi)$ and 
\[\scrH_\theta^x\defeq\wt\Div(((1\times j_V)\times j_V)^*D_{H_\theta},((1\times j_V)\times j_V)^*\chi).\]
Then
\[
(\scrH_\theta,\scrH_\theta^x)\in\wt\Cor_k^\pair(\A^1\times (V,V\setminus x),(V,V\setminus x)).
\]
\end{lemma}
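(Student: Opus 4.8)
The plan is to follow verbatim the template of the preceding lemmas, in particular the proof of \Cref{lemma:image}: first verify that $\scrH_\theta$ is a well-defined finite $MW$-correspondence from $\A^1\times V$ to $V$ via \Cref{lemma:cartier}, and then promote the pair $(\scrH_\theta,\scrH_\theta^x)$ to a morphism in $\wt\Cor_k^\pair$ via \Cref{lemma:cartier-pair}. Nothing new enters; the work is entirely in tracking the restriction properties $(1_E)$--$(3_E')$ of the interpolating polynomial $H_\theta$.

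For the first step, recall that $G_m$ is monic of degree $m$ in $Y$ by \Cref{lemma:CRT2}, while $E_m=(Y-X)F_{m-1}$ is monic of degree $m$ in $Y$, since $F_{m-1}$ is monic of degree $m-1$ by \Cref{lemma:CRT3}. Hence $H_\theta=\theta G_m+(1-\theta)E_m$ is again monic of degree $m$ in $Y$, so its vanishing locus $\calV(H_\theta)$ is finite and surjective over $\A^1\times V$. Combined with the inclusion $\calV(H_\theta)\subseteq\A^1\times V\times V$ noted just above (a consequence of $(1_E)$ and $(2_E)$, which make $H_\theta$ a unit on $\A^1\times V\times B$ and on $\A^1\times V\times A$), \Cref{lemma:cartier} yields $\scrH_\theta=\wt\Div(D_{H_\theta},\chi)\in\wt\Cor_k(\A^1\times V,V)$.

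For the second step, apply \Cref{lemma:cartier-pair} with the open immersions $1\times j_V\colon\A^1\times(V\setminus x)\to\A^1\times V$ and $j_V\colon V\setminus x\to V$. It remains to check that the support of the restriction $D_{H_\theta}|_{(\A^1\times(V\setminus x))\times V}$ lies in $\A^1\times(V\setminus x)\times(V\setminus x)$. As $\calV(H_\theta)\subseteq\A^1\times V\times V$ is already known, it suffices to show that $H_\theta$ does not vanish on $\A^1\times(V\setminus x)\times x$. Restricting to $V\times x$, property $(3_E)$ gives $E_m|_{V\times x}=(Y-X)|_{V\times x}=G_m|_{V\times x}$, so the homotopy parameter drops out and $H_\theta|_{\A^1\times V\times x}=(Y-X)|_{V\times x}$. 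By $(3_E')$ this restricts further to a unit on $(V\setminus x)\times x$, because $(V\setminus x)\times x$ contains no diagonal points. Hence $\calV(H_\theta)\cap(\A^1\times(V\setminus x)\times\A^1)\subseteq\A^1\times(V\setminus x)\times(V\setminus x)$, and \Cref{lemma:cartier-pair} applies, producing $(\scrH_\theta,\scrH_\theta^x)\in\wt\Cor_k^\pair(\A^1\times(V,V\setminus x),(V,V\setminus x))$.

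I do not expect a genuine obstacle here; the statement is a bookkeeping consequence of the machinery of \Cref{section:cartier} together with the carefully engineered properties of $G_m$, $F_{m-1}$, and $E_m$. The two points that merit a line of care are that $E_m$ — and therefore $H_\theta$ — is monic of degree \emph{exactly} $m$ in $Y$ (this is why $\calV(H_\theta)$ is finite over $\A^1\times V$), and that the $\theta$-dependence of $H_\theta$ must cancel on the loci $\A^1\times V\times B$, $\A^1\times V\times A$, and $\A^1\times V\times x$, which is precisely what properties $(1_E)$, $(2_E)$, $(3_E)$ were arranged to ensure.
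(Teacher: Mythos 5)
Your proof is correct and follows essentially the same route as the paper: monicity of $H_\theta$ in $Y$ (using that $G_m$ and $E_m$ are monic of the same degree) gives finiteness and surjectivity over $\A^1\times V$, and properties $(1_E)$, $(2_E)$, $(3_E)$, $(3_E')$ give the support containments needed to invoke \Cref{lemma:cartier} and \Cref{lemma:cartier-pair}. No discrepancies.
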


\begin{proof}
To see that $\scrH_\theta$ is a finite $\rmMW$-correspondence from $\A^1\times V$ to $V$, note that both $G_m$ and $E_m$ are monic and of the same degree in $Y$. Therefore the linear combination $H_\theta$ of $G_m$ and $E_m$ is also monic in $Y$, and it follows that the support $\calV(H_\theta)$ of $D_{H_\theta}$ is finite and surjective over $\A^1\times V$ by \Cref{lemma:monic-fin-surj}. Hence $\scrH_\theta\in\wt\Cor_k(\A^1\times V,V)$ by \Cref{lemma:cartier}.

Turning to $\scrH_\theta^x$, we must show that 
\[\calV(H_\theta)\cap(\A^1\times(V\setminus x)\times V)\subseteq \A^1\times(V\setminus x)\times(V\setminus x).\]
We already know that $H_\theta$ is invertible on $\A^1\times (V\setminus x)\times A$ and on $\A^1\times(V\setminus x)\times B$. It remains to check the set $\A^1\times(V\setminus x)\times x$. But by $(3_E)$ and $(3_E')$ we have
\[
E_m(Y)|_{(V\setminus x)\times x}=G_m(Y)|_{(V\setminus x)\times x}=(Y-X)|_{(V\setminus x)\times x},
\]
which is invertible as $(V\setminus x)\times x$ does not intersect the diagonal. Therefore the linear combination $H_\theta$ of $E_m$ and $G_m$ is also invertible on $(V\setminus x)\times x$, and the claim follows. Using \Cref{lemma:cartier-pair}, this shows that $(\scrH_\theta,\scrH_\theta^x)$ constitutes a morphism of pairs from $\A^1\times(V,V\setminus x)$ to $(V,V\setminus x)$.
\end{proof}

Let us compute the start-, and endpoints $\scrH_0$, $\scrH_1$ of the homotopy $\scrH_\theta$---that is, the precomposition of $\scrH_\theta$ with the rational points $i_0,i_1\colon V\to \A^1\times V$.

\begin{lemma}\label{lemma:hty01}
We have $\scrH_0=\id_V+j_V\circ\Theta$ where $\Theta\in\wt\Cor_k(V,V\setminus x)$. On the other hand, $\scrH_1=\Psi\circ i$, where $i\colon V\hookrightarrow U$ is the inclusion.
\end{lemma}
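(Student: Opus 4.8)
The plan is to specialize the Cartier divisor $D_{H_\theta}$ at the two rational points $\theta = 0,1$ and to identify the resulting finite $MW$-correspondences directly. By \Cref{lemma:cartier-functoriality} applied to the maps $i_0, i_1\colon V\to\A^1\times V$, one has $\scrH_j = \scrH_\theta\circ i_j = \wt\Div((i_j\times 1)^*D_{H_\theta},(i_j\times 1)^*\chi)$ for $j = 0,1$, and the polynomial $(i_j\times 1)^*H_\theta$ is just $H_\theta$ with $\theta$ set equal to $j$. Since $H_\theta|_{\theta=1} = G_m$ and $H_\theta|_{\theta=0} = E_m$, this reduces the statement to comparing $\wt\Div(D_{G_m},\cdot)$ and $\wt\Div(D_{E_m},\cdot)$ on $V\times V$, equipped with their $dY$-trivializations, against $\Psi\circ i$ and $\id_V + j_V\circ\Theta$, respectively.

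The identity $\scrH_1 = \Psi\circ i$ will be immediate: applying \Cref{lemma:cartier-functoriality} to $i\colon V\to U$ gives $\Psi\circ i = \wt\Div((i\times 1)^*D_{G_m},(i\times 1)^*\psi)$, and $(i\times 1)^*D_{G_m}$ is precisely the divisor on $V\times V$ cut out by $G_m$, carrying the $dY$-trivialization, which is exactly $(i_1\times 1)^*D_{H_\theta}$ with its trivialization.

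For $\scrH_0$ I would use the factorization $E_m = (Y-X)\cdot F_{m-1}$, which means $D_{E_m} = \calV(Y-X) + \calV(F_{m-1})$ as Cartier divisors on $V\times V$; by property $(3'')$ of \Cref{lemma:CRT3} the supports $\Delta(V)$ and $\calV(F_{m-1})$ are disjoint. Equipping $\calO(\calV(Y-X))$ and $\calO(\calV(F_{m-1}))$ with the canonical trivializations given by the inverses of the defining functions, \Cref{lemma:cartier-sum} then gives $\scrH_0 = \wt\Div(\calV(Y-X),\chi^{(1)}) + \wt\Div(\calV(F_{m-1}),\chi^{(2)})$, where $\chi^{(1)},\chi^{(2)}$ are the trivializations by $dY$; the thing to verify here is that $\chi^{(1)}$, $\chi^{(2)}$ and the $dY$-trivialization of $D_{E_m}$ all factor through the canonical $\calO$-trivializations followed by the common isomorphism $\calO_{V\times V}\cong\omega_V$, $1\mapsto dY$, so that \Cref{lemma:cartier-sum} applies verbatim. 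The first summand equals $\id_V$ by the $m=1$ case of \Cref{lemma:difference}, now read over $V$ instead of $U$ (the diagonal Cartier divisor with its canonical trivialization is the identity correspondence). For the second summand I would show $\calV(F_{m-1})\subseteq V\times(V\setminus x)$: by $(1_E)$ and $(2_E)$ the function $E_m$ is a unit on $V\times(A\cup B) = V\times(\A^1\setminus V)$, so $\calV(F_{m-1})\subseteq\calV(E_m)\subseteq V\times V$; and by $(3_E)$, $E_m|_{V\times x} = (Y-X)|_{V\times x}$ vanishes only at $(x,x)\in\Delta(V)$, so $\calV(F_{m-1})\cap(V\times x) = \bigl(\calV(E_m)\setminus\Delta(V)\bigr)\cap(V\times x) = \varnothing$. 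Then \Cref{lemma:cartier-open} supplies the unique $\Theta\defeq\wt\Div((1\times j_V)^*\calV(F_{m-1}),(1\times j_V)^*\chi^{(2)})\in\wt\Cor_k(V,V\setminus x)$ with $j_V\circ\Theta = \wt\Div(\calV(F_{m-1}),\chi^{(2)})$, and the identity $\scrH_0 = \id_V + j_V\circ\Theta$ follows.

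The routine parts are the functoriality computations and the trivialization bookkeeping; the only step with genuine geometric content, and the one I expect to require the most care, is the containment $\calV(F_{m-1})\subseteq V\times(V\setminus x)$, since it is exactly what lets the $\calV(F_{m-1})$-part of $\scrH_0$ be factored through the open inclusion $j_V\colon V\setminus x\to V$, and it is where properties $(3'')$ and $(3_E)$ of the interpolating polynomials get used.
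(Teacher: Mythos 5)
Your proof is correct and follows essentially the same route as the paper: specialize $D_{H_\theta}$ at $\theta=0,1$ via \Cref{lemma:cartier-functoriality}, split $D_{E_m}$ into the diagonal plus $\calV(F_{m-1})$ using \Cref{lemma:cartier-sum}, identify the diagonal summand with $\id_V$, and factor the remaining summand through $j_V$ via \Cref{lemma:cartier-open}. Your explicit verification that $\calV(F_{m-1})\subseteq V\times(V\setminus x)$ (via $(1_E)$, $(2_E)$, $(3_E)$ and the disjointness from $(3'')$) is a detail the paper only asserts, so including it is a welcome addition.
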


\begin{proof}
By \Cref{lemma:cartier-functoriality} we have $\scrH_1=\wt\Div((i_1\times1)^*D_{H_\theta},(i_1\times1)^*\chi)=\Psi\circ i$. As for $\scrH_0$, we have
\[
\scrH_0=\wt\Div((i_0\times1)^*D_{H_\theta},(i_0\times1)^*\chi)=\wt\Div(D_{E_m},(i_0\times1)^*\chi),
\]
where $D_{E_m}$ is the principal Cartier divisor on $V\times V$ defined by the polynomial $E_m$. Let $D_{F_{m-1}}$ be the principal divisor on $V\times V$ defined by $F_{m-1}$. As $\calV(E_m)=\Delta(V)\amalg\calV(F_{m-1})$, \Cref{lemma:cartier-sum} tells us that 
\[\scrH_0=\Delta_1+\wt\Div(D_{F_{m-1}},(i_0\times1)^*\chi).\] 
Here $\Delta_1$ is the divisor defined in \Cref{def:thick-diagonal}, satisfying $\Delta_1=\id_V$. As $\calV(F_{m-1})\subseteq V\times(V\setminus x)$, \Cref{lemma:cartier-open} ensures that there is a unique element \[\Theta\in\wt\CH^1_{\calV(F_{m-1})}(V\times(V\setminus x),\omega_{V\setminus x})\] such that $j_V\circ\Theta =\wt\Div(D_{E_m},(i_0\times1)^*\chi)$. By \Cref{lemma:CRT3}, $\calV(F_{m-1})$ is finite and surjective over $V\setminus x$, and hence $\Theta\in\wt\Cor_k(V,V\setminus x)$.
\end{proof}

\begin{proof}[Proof of \Cref{thm:Zar-surj}]
The content of \Cref{thm:Zar-surj} is a rephrasing of \Cref{lemma:hty01}.
\end{proof}

\section{Zariski excision on \texorpdfstring{$\A^1_K$}{A1K}}\label{section:A1K}
We now aim to extend the results of \Cref{section:Zar-excision} to open subsets of $\A^1_K$, where $K=k(X)$ is the function field of some integral $k$-scheme $X\in\Sm_k$. This can be achieved by the following trick, which was suggested to the author by I. Panin: given a presheaf $\scrF\in\wt\PSh(k)$, we can extend $\scrF$ to a presheaf $\scrF^X$ on a certain full subcategory of $\wt\Cor_{K}$, and then use Zariski excision for presheaves on $\wt\Cor_K$. 

In this section, the field $k$ is assumed to be of characteristic $0$.

\begin{remark}\label{rmk:ground-field}
Notice that the results of \Cref{section:Zar-excision} show that Zariski excision on $\A^1_K$ holds for any homotopy invariant presheaf on $\wt\Cor_K$ by simply letting the ground field be $K$. The point of this section, however, is to show that we can obtain Zariski excision on $\A^1_K$ also for homotopy invariant presheaves on $\wt\Cor_k$. 
\end{remark}

\begin{definition}
Let $X\in\Sm_k$ be a smooth integral $k$-scheme, and let $K\defeq k(X)$ be the function field of $X$. We define the category $\wt\Cor_K^X$ as follows. Its objects are are pairs $(Y,V\subseteq Y_K)$ consisting of a smooth $k$-scheme $Y\in\Sm_k$ along with an open subscheme $V$ of $Y_K\defeq Y\times_k\Spec(k(X))$. The morphisms of $\wt\Cor_K^X$ are given as
\[
\Hom_{\wt\Cor_K^X}((Y,V),(Y',V'))\defeq\wt\Cor_K(V,V').
\]
Abusing notation, we may write simply $V$ for an object $(Y,V)$ of $\wt\Cor_K^X$.
\end{definition}

\begin{remark}\label{rem:Xhtpy}
Since any open subscheme $V$ of $Y_K$ is $K$-smooth, $\wt\Cor_K^X$ is equivalent to the full subcategory of $\wt\Cor_K$ whose objects are those $V\in\wt\Cor_K$ for which there exists $Y\in\Sm_k$ along with an open embedding $V\hookrightarrow Y_K$.
\end{remark}

Let us fix some notation:

\begin{definition}
If $(Y,V)\in\wt\Cor_K^X$, we define the following subschemes of $Y_K$ and $Y\times_k X$:
\begin{itemize}
\item $Z\defeq Y_K\setminus V$;
\item $\scrZ\defeq\ol Z$, the Zariski closure of $Z$ in $Y\times_k X$;
\item $\scrV\defeq(Y\times_k X)\setminus\scrZ$.
\end{itemize}
Let also $\scrV_K\defeq\scrV\times_{X}\Spec(K)$ denote the generic fiber of the projection $p_X|_{\scrV}\colon \scrV\to X$. Note that we then have $\scrV_K=V$. Furthermore, for each open subscheme $X_i$ of $X$, set
\[
\scrV(X_i)\defeq\scrV\cap(Y\times_k X_i),
\]
the intersection being taken in $Y\times_k X$. Then we have $V=\scrV_K=\varprojlim_{X_i\subseteq X}\scrV(X_i)$, where the limit runs over all nonempty open subsets of $X$. In particular, $\scrV=\scrV(X)$. 
\end{definition}

\begin{definition}\label{def:scr-notation}
Let $\scrF\in\wt\PSh(k)$ be a presheaf. For any $V\in\wt\Cor_K^X$, we set
\[
\scrF^X(V)\defeq\varinjlim_i\scrF(\scrV(X_i)).
\]
In particular, if $(Y,V)=(\A^1_k,\A^1_K)$, then $\scrF^X(V)=\scrF(\A^1_K)=\varinjlim_i\scrF(\A^1_k\times_k X_i)$.
\end{definition}

\begin{remark}\label{rem:Xhtpy2}
Notice that if $\scrF\in\wt\PSh(k)$ is homotopy invariant, then $\scrF^X(\A^1_K)\cong\scrF^X(K)$.
\end{remark}

Our goal is now to promote $\scrF^X$ to a presheaf on $\wt\Cor_K^X$. For $U,V\in\wt\Cor_K^X$, this means that we need to define a natural restriction map $\alpha^*\colon\scrF^X(U)\to\scrF^X(V)$ for any $\alpha\in\wt\Cor_K^X(V,U)$. To do this we need some preparations. First, recall that we can write $U=\varprojlim_i\scrU(X_i)$, $V=\varprojlim_i\scrV(X_i)$, where $\scrU$, $\scrV$ and $X_i$ are as in \Cref{def:scr-notation}.

\begin{lemma}\label{lemma:fundamental-trick}
With the notations as above, we have a natural isomorphism
\[
\varinjlim_i\wt\Cor_{X_i}(\scrV(X_i),\scrU(X_i))\xrightarrow{\cong}\wt\Cor_K(V,U).
\]
\end{lemma}

\begin{proof}
Rewriting $U$ as $\scrU\times_X\Spec(K)$, we obtain the chain of natural isomorphisms
\begin{align*}
\wt\Cor_K(V,U) &\cong\wt\Cor_K(V,\scrU\times_X\Spec(K))\\
&\cong\wt\Cor_X(V,\scrU)\\
&\cong\varinjlim_i\wt\Cor_X(\scrV(X_i),\scrU)\\
&\cong\varinjlim_i\wt\Cor_{X_i}(\scrV(X_i),\scrU(X_i)).
\end{align*}
Here the penultimate isomorphism follows from a similar argument as that of \cite[Lemmas 4.6 and 5.10]{Calmes-Fasel}.
\end{proof}

For any $\alpha\in\wt\Cor_K^X(V,U)$, we can now define a natural map
\[
\alpha^*\colon\scrF^X(U)\to\scrF^X(V)
\]
as follows. Using \Cref{lemma:fundamental-trick}, we may choose a representative $\alpha_i\in\wt\Cor_{X_i}(\scrV(X_i),\scrU(X_i))$ mapping to $\alpha$. Let 
\[
f_i\colon\scrV(X_i)\times_{X_i}\scrU(X_i)\to\scrV(X_i)\times_k\scrU(X_i)
\]
denote the canonical morphism. By \Cref{lemma:rel-MW-to-MW}, $f_i$ induces a homomorphism 
\[
(f_i)_*\colon\wt\Cor_{X_i}(\scrV(X_i),\scrU(X_i))\to\wt\Cor_k(\scrV(X_i),\scrU(X_i)).
\]

\begin{definition} 
With the notations as above, set
\[
\alpha^*\defeq\varinjlim_{j\ge i}\p*{(f_j)_*(\alpha_j)}^*\colon\scrF^X(U)\to\scrF^X(V).
\]
\end{definition}

For any pair of indices $i\le j$, the following commutative diagram in $\wt\Cor_k$,
\[\begin{tikzcd}
\scrV(X_j)\ar[r,hook]\ar{d}[swap]{(f_j)_*(\alpha_j)} & \scrV(X_i)\ar{d}{(f_i)_*(\alpha_i)}\\
\scrU(X_j)\ar[r,hook] & \scrU(X_i),
\end{tikzcd}\]
shows that the definition of $\alpha^*$ does not depend on the lift $\alpha_i$.

\begin{lemma}[Injectivity on $\A^1_K$]\label{lemma:inj-A1K}
Let $(\A^1_k,U)$ and $(\A^1_k,V)$ be two objects of $\wt\Cor_K^X$ such that $V$ is nonempty and $V\subseteq U$. Write $i\colon V\hookrightarrow U$ for the inclusion. Then the induced map
\[
i^*\colon\scrF^X(U)\to\scrF^X(V)
\]
is injective for any homotopy invariant presheaf $\scrF\in\wt\PSh(k)$. 
\end{lemma}

\begin{proof}
Injectivity on the affine line gives a homotopy $\Phi\in\wt\Cor_K(U,V)$ such that $i\circ\Phi\sim_{\A^1}\id_U$. Since $\Phi$ is a morphism in $\wt\Cor_K^X$ and $\scrF^X$ is a presheaf on $\wt\Cor_K^X$, the result follows.
\end{proof}

\begin{lemma}[Zariski excision on $\A^1_K$]\label{lemma:Zar-exc-A1K}
Let $x\in\A^1_K$ be a closed point, and let $(\A^1_k,U)\in\wt\Cor^X_K$ be such that $x\in U$. Denote by $i\colon U\hookrightarrow\A^1_K$ the inclusion. Then $i$ induces an isomorphism
\[
i^*\colon\dfrac{\scrF^X(\A^1_K\setminus x)}{\scrF^X(\A^1_K)}\xrightarrow{\cong}\dfrac{\scrF^X(U\setminus x)}{\scrF^X(U)}
\]
 for any homotopy invariant presheaf $\scrF\in\wt\PSh(k)$. 
\end{lemma}

\begin{proof}
This follows similarly as in \Cref{lemma:inj-A1K} above. 
\end{proof}

\section{Injectivity for local schemes}\label{section:injectivity}
The goal of this section is to prove the following theorem.

\begin{theorem}\label{thm:inj-loc-sch}
Let $X$ be a smooth $k$-scheme and $x\in X$ a closed point. Let $U\defeq\Spec(\calO_{X,x})$ and write $\can\colon U\rightarrow X$ for the canonical inclusion. Suppose that $i\colon Z\to X$ is a closed subscheme of codimension $\ge1$ in $X$ satisfying $x\in Z$. Let $j\colon X\setminus Z\hookrightarrow X$ denote the open complement. Then there exists a finite $\rmMW$-correspondence $\Phi\in\wt\Cor_k(U,X\setminus Z)$ such that the diagram
\[\begin{tikzcd}
& X\setminus Z\ar[d,"j"]\\
U\ar[ur,"\Phi"]\ar[r,"\can"] & X
\end{tikzcd}\]
commutes in $\hwtCor_k$.
\end{theorem}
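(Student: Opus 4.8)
The plan is to mimic the geometric strategy used in Sections \ref{section:inj-aff-line}--\ref{section:Zar-inj}, but now working fibrewise over the local scheme $U=\Spec\calO_{X,x}$. The key geometric input is the Garkusha--Panin machinery from \cite[§13]{hty-inv}: after shrinking $X$ to an affine neighborhood of $x$ on which $\omega_{X/k}$ is trivial and choosing a suitable (almost) elementary fibration or étale coordinate projection, one obtains over $U$ an affine curve $p\colon \calX\to U$ together with a relatively ample situation allowing the use of Cartier divisors as in \Cref{section:cartier}. Concretely, I would first replace $X$ by an affine open around $x$ and find a finite surjective morphism (or a good projection) that presents a neighborhood of $x$ in $X$ as a divisor in $\A^1\times U$-type geometry, so that $X\setminus Z$ becomes a relatively affine open. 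The point $x$ corresponds, over the closed point of $U$, to a section; the hypothesis $x\in Z$ means this section meets the preimage of $Z$.

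The core step is then a Chinese-remainder-type interpolation, exactly parallel to \Cref{lemma:CRT1}. Over $U$ (whose ring is local, which makes the CRT argument cleaner than over $\A^1$) I would construct a polynomial $G$ — monic in the fibre coordinate $Y$, hence defining a divisor $D_G$ finite and surjective over $U$ — with prescribed behaviour: $G$ should be a unit on the locus lying over $Z$ (so that $\calV(G)\subseteq \calX\setminus(\text{preimage of }Z)$, pushing the resulting correspondence into $X\setminus Z$), while agreeing to first order with the ``diagonal'' defining polynomial along the section $x$, so that the associated correspondence $\Phi\defeq\wt\Div((1\times j)^*D_G,\phi)\in\wt\Cor_k(U,X\setminus Z)$ has the property that $j\circ\Phi$ agrees in $\wt\Cor_k(U,X)$, up to $\A^1$-homotopy, with $\can$. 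The homotopy itself is built as in \Cref{section:inj-aff-line}: interpolate $H_\theta\defeq \theta G + (1-\theta)(\text{defining polynomial of }\can)$ over $\A^1\times U$, check via \Cref{lemma:cartier} that $\wt\Div(D_{H_\theta},\psi)\in\wt\Cor_k(\A^1\times U,X)$ (monicity of $H_\theta$ is preserved), and identify its endpoints at $i_0,i_1$ using \Cref{lemma:cartier-functoriality} and \Cref{lemma:cartier-open}. One subtlety is the ``thick diagonal'' degree bookkeeping: as in \Cref{lemma:difference}, the naive construction may produce $\ip{-1}^m$-twisted multiples of $\can$, so I expect to take a difference $\Phi\defeq\Phi_{m+1}-\Phi_m$ (or to work with an even exponent) to get exactly $\can$ rather than $\ip{-1}^m\can$ in $h\wt\Cor_k$.

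The main obstacle is the geometric setup over the local scheme $U$: unlike the case $X=\A^1$ treated earlier, a general smooth $X$ with a closed subscheme $Z$ does not come with an obvious coordinate $Y$ and a defining polynomial for $\can$. Producing the curve $\calX\to U$, the relatively ample line bundle, and the monic polynomial playing the role of $(Y-X)$ requires the Gabber-type geometric presentation lemmas and the explicit constructions of \cite[§13]{hty-inv}; transporting those to the $MW$-setting, and checking that all the line-bundle twists match $\omega$ correctly so that \Cref{lemma:cartier} and \Cref{lemma:cartier-pair} apply, is where the real work lies. Once the presentation is in place, the divisor-to-correspondence formalism of \Cref{section:cartier} and the homotopy argument go through essentially verbatim.
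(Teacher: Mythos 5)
Your overall shape of argument (shrink $X$, set up a relative-curve presentation over $U$, interpolate a function that is a unit over $Z$ and agrees with a ``diagonal'' datum, then build a linear homotopy $\theta G+(1-\theta)(\cdot)$ whose endpoints are $\can+(\text{something landing in }X\setminus Z)$ and $(\text{something landing in }X\setminus Z)$) matches the paper's strategy. But there is a concrete gap in the mechanism you propose for turning the geometric data into an $MW$-correspondence. You want to set $\Phi\defeq\wt\Div((1\times j)^*D_G,\phi)$ and $\scrH_\theta\defeq\wt\Div(D_{H_\theta},\psi)$, i.e.\ to reuse the Cartier-divisor dictionary of \Cref{section:cartier}. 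That dictionary (\Cref{lemma:cartier}) produces classes in $\wt\CH^1$ and hence only yields finite $MW$-correspondences into a \emph{curve}; here the target is $X$ (resp.\ $X\setminus Z$) of arbitrary dimension $d_X$, where a correspondence from $U$ lives in $\wt\CH^{d_X}(U\times X,\omega_X)$. So the construction as written does not typecheck once $\dim X>1$, and ``presenting a neighborhood of $x$ as a divisor in $\A^1\times U$-type geometry'' does not resolve this: the relative curve is $\scrX\defeq X\times_B U$ over $U$ (coming from an almost elementary fibration $p\colon X\to B$ à la Artin/\cite{PSV}), not an embedding of $X$ into $\A^1\times U$. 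The paper's actual mechanism is different: take the finite surjective $U$-morphism $H_\theta=(h_\theta,p_U)\colon\scrX\to\A^1\times U$ of \cite[Lemma 7.1]{hty-inv}, trivialize $\omega_{H_\theta\times1}$, and define $\scrH_\theta$ as the \emph{pushforward} $(H_\theta\times1)_*$ of the graph correspondence $p_X\in\wt\CH^{d_X}_{\Gamma_{p_X}}(\scrX\times X,\omega_X)$. Identifying the $\Delta(U)$-component of $\scrH_0$ with $\wt\gamma_\can$ then requires two applications of the base-change formula for these pushforwards, a step absent from your outline.

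A smaller point: your anticipated $\ip{-1}^m$ bookkeeping and the difference $\Phi_{m+1}-\Phi_m$ are artifacts of the $(Y-X)^m$ construction on the affine line and do not arise here. In \Cref{lemma:nice-map} the scheme-theoretic fibre $\scrD_0=H_\theta^{-1}(0\times U)$ contains $\Delta(U)$ as a \emph{reduced} disjoint summand, so the diagonal component of $\scrH_0$ comes out as exactly $(\Gamma_\can)_*(\ip1)=\wt\gamma_\can$ with no $m_\epsilon$ correction, and one simply takes $\Phi\defeq\Phi_1-\Phi_0'$ where $\scrH_0=\can+j\circ\Phi_0'$ and $\scrH_1=j\circ\Phi_1$.
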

For homotopy invariant presheaves on $\wt\Cor_k$ we immediately obtain:

\begin{corollary}\label{cor:vanishing-section}
Suppose that $\scrF\in\wt\PSh(k)$ is a homotopy invariant presheaf with $\rmMW$-transfers. If $s\in \scrF(X)$ is a section such that $s|_{X\setminus Z}=0$, then $s|_U=0$.
\end{corollary}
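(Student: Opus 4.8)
The statement to prove is \Cref{cor:vanishing-section}, which I intend to deduce directly from \Cref{thm:inj-loc-sch}. The plan is to exploit the factorization of $\can$ through $X\setminus Z$ up to $\A^1$-homotopy provided by the finite $MW$-correspondence $\Phi\in\wt\Cor_k(U,X\setminus Z)$. The key point is that $\scrF$, being homotopy invariant, factors through the homotopy category $h\wt\Cor_k$ (as recorded after \Cref{def:hty-cat}), so the commutative diagram in $h\wt\Cor_k$ of \Cref{thm:inj-loc-sch} becomes a genuine commutative diagram after applying $\scrF$.

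\textbf{Main steps.} First, I would invoke \Cref{thm:inj-loc-sch} to obtain $\Phi\in\wt\Cor_k(U,X\setminus Z)$ with $\ol{j}\circ\ol\Phi=\ol{\can}$ in $h\wt\Cor_k(U,X)$. Second, since $\scrF$ is homotopy invariant, it descends to a functor on $h\wt\Cor_k$; hence applying $\scrF$ to the relation $\ol j\circ\ol\Phi=\ol{\can}$ yields the identity $\can^*=\Phi^*\circ j^*\colon \scrF(X)\to\scrF(U)$ of induced maps on sections. Third, I evaluate both sides on the given section $s\in\scrF(X)$: the right-hand side gives $\Phi^*(j^*s)=\Phi^*(s|_{X\setminus Z})=\Phi^*(0)=0$ by hypothesis, while the left-hand side gives $\can^*(s)=s|_U$. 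Combining, $s|_U=0$, as desired.

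\textbf{Expected obstacle.} There is essentially no obstacle here: all the substantive geometric work---constructing $\Phi$ and the requisite $\A^1$-homotopy---is precisely the content of \Cref{thm:inj-loc-sch}, which we are permitted to assume. The only point requiring a moment's care is the translation between the diagrammatic statement ``the square commutes in $h\wt\Cor_k$'' and the algebraic statement ``$\can^*=\Phi^*\circ j^*$ on $\scrF$''; this is immediate from the fact that a homotopy invariant presheaf factors through $h\wt\Cor_k$, so that $\A^1$-homotopic $MW$-correspondences induce equal maps on $\scrF$. The proof is therefore a one-line deduction, and I would write it as such.

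\begin{proof}
By \Cref{thm:inj-loc-sch} there is a finite $MW$-correspondence $\Phi\in\wt\Cor_k(U,X\setminus Z)$ such that $\ol j\circ\ol\Phi=\ol{\can}$ in $h\wt\Cor_k(U,X)$. Since $\scrF$ is homotopy invariant it factors through $h\wt\Cor_k$, so applying $\scrF$ yields $\can^*=\Phi^*\circ j^*\colon\scrF(X)\to\scrF(U)$. Evaluating on $s$ and using $j^*s=s|_{X\setminus Z}=0$ gives $s|_U=\can^*(s)=\Phi^*(j^*s)=\Phi^*(0)=0$.
\end{proof}
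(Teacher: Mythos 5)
Your proof is correct and is exactly the deduction the paper intends: the corollary is stated as an immediate consequence of \Cref{thm:inj-loc-sch}, obtained by applying the homotopy invariant presheaf $\scrF$ to the relation $\ol j\circ\ol\Phi=\ol{\can}$ in $h\wt\Cor_k$ and evaluating on $s$. No gaps.
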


Let $X^\circ\subseteq X$ be a Zariski open neighborhood of the point $x$, and let $Z^\circ\defeq Z\cap X^\circ$. As noted in \cite[§8]{hty-inv}, it is enough to solve the problem for the triple $U$, $X^\circ$ and $X^\circ\setminus Z^\circ$. In particular, we may assume that $X$ is irreducible and that the canonical sheaf $\omega_{X/k}$ is trivial. In fact, we will shrink $X$ so that we are in the situation of a relative curve over a quasi-projective scheme. The advantage of this approach is that it turns problems regarding subschemes of high codimension into problems regarding divisors, which is a much more flexible setting. For the shrinking process we refer to the following theorem,  which is originally due to M. Artin.

\begin{theorem}[\protect{\cite[Proposition 1]{PSV}}]
Let $X$, $Z$ and $x\in Z$ be as in \Cref{thm:inj-loc-sch}. Then there is a Zariski open neighborhood $X^\circ\subseteq X$ of the point $x$, an open immersion $X^\circ\hookrightarrow \ol X^\circ$, a Zariski open subscheme $B$ of $\PP^{\dim X-1}$ and a commutative diagram
\[\begin{tikzcd}
  X^\circ\ar[r,hook]\ar[dr,swap,"p"] & \ol X^\circ\ar[d,"\ol p"] & X_\infty^\circ\ar[l,hook']\ar[dl,"p_\infty"]\\
& B &
\end{tikzcd}\]
satisfying the following properties:
\begin{enumerate}
\item[$(1)$] $\ol p$ is a smooth projective morphism, whose fibers are irreducible projective curves.
\item[$(2)$] $X^\circ_\infty=\ol X^\circ\setminus X^\circ$, and $p_\infty\colon X^\circ_\infty\to B$ is finite étale.
\item[$(3)$] The morphism $p|_{Z\cap X^\circ}\colon Z\cap X^\circ\to B$ is finite 
(where the intersection is taken in $\ol X^\circ$). 
\end{enumerate}
The morphism $p\colon X^\circ\to B$ is called an \em{almost elementary fibration}.
\end{theorem}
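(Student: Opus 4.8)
The statement is M.~Artin's construction of an elementary fibration (SGA~4, Exp.~XI), sharpened so that the given closed subscheme $Z$ becomes horizontal over the base; the plan is to run Artin's generic-projection argument while keeping control of $Z$ and of the point $x$.

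First I would pass to a convenient model. Since $X$ is smooth over $k$ its connected and irreducible components coincide, so replacing $X$ by an affine open neighbourhood of $x$ inside the irreducible component through $x$ we may assume $X$ is affine, integral and smooth of dimension $n\defeq\dim X$; as noted just before the statement, this shrinking is harmless for the conclusion. Fix a closed immersion $X\hookrightarrow\A^N_k$ and let $\ol X\subseteq\PP^N_k$ be the projective closure, an integral projective variety of dimension $n$ whose boundary $\ol X_\infty\defeq\ol X\setminus X$ is a hyperplane section of pure dimension $n-1$ containing the singular locus of $\ol X$ (as $X$ is smooth). Let $\ol Z$ be the closure of $Z$ in $\ol X$; then $\ol Z\cap\ol X_\infty$ is closed of dimension $\le\dim Z-1\le n-2$, and $x\notin\ol X_\infty$.

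Next comes the core step: a generic linear projection $\pi_L\colon\PP^N_k\dashrightarrow\PP^{n-1}_k$ with linear centre $L\cong\PP^{N-n}_k$. As $k$ is infinite, $k$-rational centres are Zariski dense in the Grassmannian of such $L$, and Bertini-type arguments show that a sufficiently general $L$ satisfies all of the following at once: $x\notin L$, so that $\pi_L$ is a morphism $p$ on a Zariski neighbourhood of $x$; the fibre $\ol X\cap\ip{L,x}$ of $\pi_L|_{\ol X}$ through $x$ is a smooth irreducible curve disjoint from $\ol Z\cap\ol X_\infty$ and meeting $\ol X_\infty$ in a reduced finite scheme (which is possible because $\ip{L,x}$ has dimension $N-n+1$ and $\dim(\ol Z\cap\ol X_\infty)\le n-2$); and $L$ meets $\ol X$, respectively $\ol Z$, in finitely many points lying on $\ol X_\infty$, so that $\pi_L$ restricts to finite morphisms $\ol X_\infty\to\PP^{n-1}_k$, generically étale, and $\ol Z\to\PP^{n-1}_k$, onto their images. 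The only addition to Artin's lemma is the interaction with $\ol Z$, which costs nothing since $\dim\ol Z<n$.

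Finally I would assemble the fibration. By openness of the loci above there is an open neighbourhood $B$ of $p(x)$ in $\PP^{n-1}_k$ over which the restriction $p\colon X^\circ\to B$, with $X^\circ$ the part of $X$ lying over $B$, is smooth with geometrically irreducible one-dimensional fibres, over which $\ol X_\infty\to\PP^{n-1}_k$ is finite étale, and which avoids the closed set $p(\ol Z\cap\ol X_\infty)$ of dimension $<n-1$. Since the fibres of $p$ are then smooth affine curves with reduced finite ``boundary'' along $\ol X_\infty$, after one more shrinking of $B$ these glue into a smooth projective relative curve $\ol p\colon\ol X^\circ\to B$ with irreducible fibres, containing $X^\circ$ as a fibrewise-dense open whose complement $X^\circ_\infty\defeq\ol X^\circ\setminus X^\circ$ is finite étale over $B$; this is Artin's elementary fibration. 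Moreover $\ol Z$ is disjoint from $\ol X_\infty$ over $B$, so $Z\cap X^\circ$ is finite over $B$ and disjoint from $X^\circ_\infty$, while $x\in X^\circ$ (as $x\notin L\cup\ol X_\infty$ and $p(x)\in B$) and $x\in Z\cap X^\circ$. Relabelling $X^\circ$ for $X$ and $Z\cap X^\circ$ for $Z$ yields the diagram with properties (1)--(3).

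\textbf{Main obstacle.} The delicate point is realizing \emph{simultaneously} all the genericity demands on $L$ --- keeping $x$ and the fibre through it in the smooth, irreducible-fibre locus, keeping $Z$ horizontal and off the section at infinity, and forcing the fibres to be irreducible and not merely connected --- together with upgrading the affine curve fibration to a genuine \emph{smooth projective} one: as $\ol X$ is typically singular along $\ol X_\infty$, one cannot just base-change $\ol X$ over $B$, and the relative smooth compactification with finite étale boundary has to be built directly. This, and the need to control the projection over the non-closed residue fields of points of $B$, is exactly where the infiniteness of $k$ and the Bertini theorems enter.
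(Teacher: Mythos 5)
The paper does not prove this statement; it is imported verbatim from \cite[Proposition 1]{PSV}, whose proof is indeed a refinement of Artin's construction of elementary fibrations (SGA~4, Exp.~XI) with the extra bookkeeping for $Z$. So your overall strategy matches the source. However, as a proof your sketch has genuine gaps precisely at the two points you defer to the ``main obstacle'' paragraph, and these are the points the statement is really about. First, the smooth projective relative curve $\ol p\colon \ol X^\circ\to B$ is never constructed. You cannot obtain it from $\ol X$ by shrinking $B$: the singular locus of the projective closure $\ol X$ can be \emph{all} of $\ol X_\infty$, which has dimension $n-1$ and hence dominates $\PP^{n-1}$ under a generic projection, so no open $B$ avoids its image. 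The standard (and necessary) device is to first replace $\ol X$ by its normalization $\wt X$; since $X$ is smooth it embeds as an open of $\wt X$, and normality forces $\mathrm{Sing}(\wt X)$ to have dimension $\le n-2$, so its image in $\PP^{n-1}$ \emph{can} be avoided, and likewise for the non-\'etale locus of the boundary and the singular points of fibers. This step is absent from your argument, and without it properties $(1)$ and $(2)$ are not established.

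Second, your dimension counts for the centre $L$ do not work as stated. A linear centre of a projection $\PP^N\dashrightarrow\PP^{n-1}$ has dimension $N-n$, so $\dim L+\dim\ol X=N$ and $L\cap\ol X\neq\varnothing$ for \emph{every} choice of $L$; and if you place $L$ inside the hyperplane at infinity (as you must, to have the projection defined on all of $X$), then $\dim L+\dim\ol X_\infty=N-1$ equals the dimension of that hyperplane, so $L$ always meets $\ol X_\infty$. Hence the projection is undefined at points of $\ol X_\infty$, and the claim that it ``restricts to finite morphisms $\ol X_\infty\to\PP^{n-1}_k$'' cannot be taken for granted: one has to resolve the indeterminacy (blow up along $L\cap\ol X$, or use the re-embedding/Veronese tricks of Artin and \cite{PSV}) and then argue finiteness and \'etaleness of the boundary over a suitable $B$ by properness plus quasi-finiteness. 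Relatedly, irreducibility of all fibres over $B$ (not just the fibre through $x$, and over non-closed points of $B$) needs a Bertini-type irreducibility statement, which is not automatic from smoothness and connectedness of the fibre through $x$. Since these are exactly the delicate points, the proposal as written identifies the correct route but does not constitute a proof.
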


Following \cite[§8]{hty-inv}, we may shrink $X$ such that there exists an almost elementary fibration $p\colon X\to B$ and such that $\omega_{X/k}$ and $\omega_{B/k}$ are trivial, i.e., $\omega_{X/k}\cong\calO_X$ and $\omega_{B/k}\cong\calO_B$.
Let $\scrX\defeq X\times_BU$ and $\scrZ\defeq Z\times_BU$. Let also $p_X\colon\scrX\to X$ and $p_U\colon\scrX\to U$ be the projections onto $X$ and $U$, respectively, and let $d_X$ denote the dimension of $X$. Finally, let $\Delta$ denote the morphism $\Delta\defeq(\can,\id)\colon U\to \scrX$.

\begin{lemma}[\protect{\cite[Lemma 8.1]{hty-inv}}]\label{lemma:nice-map}
There exists a finite surjective morphism 
\[
H_\theta=(h_\theta,p_U)\colon \scrX\to \A^1\times U
\]
over $U$, such that if we let $\scrD_1\defeq H_\theta^{-1}(1\times U)$ and $\scrD_0\defeq H_\theta^{-1}(0\times U)$ denote the scheme-theoretic preimages, then the following hold:
\begin{enumerate}
\item[$(1)$] $\scrD_1\subseteq \scrX\setminus\scrZ$.
\item[$(2)$] $\scrD_0=\Delta(U)\amalg\scrD_0'$ with $\scrD_0'\subseteq\scrX\setminus\scrZ$.
\end{enumerate}
\end{lemma}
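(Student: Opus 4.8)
The plan is to trade the high-codimension subscheme $\scrZ$ for a divisor situation on a relative projective curve over $U$ and to build $h_\theta$ by a relative form of the Chinese Remainder Theorem (Serre vanishing), following \cite[§7]{hty-inv}.

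First I would set up the relative curve: let $\ol\scrX$ be the smooth projective completion of $\scrX$ over $U$ afforded by the almost elementary fibration, so that $\scrX_\infty\defeq\ol\scrX\setminus\scrX$ is finite étale and surjective over $U$ (by property $(2)$, as $B$ is connected). Then $\scrX_\infty$ is reduced and meets every fibre of $\ol\scrX\to U$, so $\calO_{\ol\scrX}(\scrX_\infty)$ is relatively ample. The map $\Delta=(\can,\id)\colon U\to\scrX$ is a section of the smooth morphism $\ol\scrX\to U$, hence an effective Cartier divisor on $\ol\scrX$ disjoint from $\scrX_\infty$. By property $(3)$, $\scrZ=Z\times_B U$ is finite over $U$, so $\scrZ=\Spec A$ with $A$ semilocal; it is disjoint from $\scrX_\infty$, and because $\Delta(U)$ is Cartier, $\scrZ\cap\Delta(U)$ is cut out in $\scrZ$ by a single element $\bar t\in A$.

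Next I would produce the function. Choose a unit $u\in A^\times$ whose residue in each of the finitely many residue fields $A/\frakm$ with $\bar t\notin\frakm$ is different from $\bar t^{-1}$; this is possible because every $A/\frakm$ is an infinite field and $A^\times\twoheadrightarrow\prod_\frakm (A/\frakm)^\times$. Pick a closed subscheme $W\subseteq\ol\scrX$, finite over $U$, which contains $\scrX_\infty$, the first infinitesimal neighbourhood $2\Delta(U)$ of the section, and $\scrZ$. For $N\gg0$, relative Serre vanishing gives $H^1(\ol\scrX,\calO(N\scrX_\infty)\otimes\calI_W)=0$, so restriction $H^0(\ol\scrX,\calO(N\scrX_\infty))\to H^0(W,\calO(N\scrX_\infty)|_W)$ is surjective; using that $\calO(N\scrX_\infty)$ is canonically trivial on $\ol\scrX\setminus\scrX_\infty\supseteq\scrX$, I would then choose $h_\theta\in H^0(\ol\scrX,\calO(N\scrX_\infty))$ — viewed as a rational function on $\ol\scrX$ with polar divisor $N\scrX_\infty$ — satisfying: $(a)$ $h_\theta$ has a pole of order exactly $N$ along each component of $\scrX_\infty$; $(b)$ $h_\theta$ vanishes to order exactly $1$ along $\Delta(U)$, with unit normal derivative; $(c)$ $h_\theta|_\scrZ=\bar t\,u$ in $A$, chosen so as to be compatible with $(b)$ on the overlap $\scrZ\cap 2\Delta(U)$.

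Finally I would verify the two assertions. By $(a)$, $h_\theta$ defines a $U$-morphism $\ol h_\theta\colon\ol\scrX\to\PP^1\times U$ with $\ol h_\theta^{-1}(\A^1\times U)=\scrX$; being proper and quasi-finite (nonconstant on each fibre) it is finite and surjective, so $H_\theta=(h_\theta,p_U)=\ol h_\theta|_\scrX\colon\scrX\to\A^1\times U$ is finite and surjective over $U$. By $(b)$, in a neighbourhood of $\Delta(U)$ in $\scrX$ one has $\scrD_0=\calV(h_\theta)=\Delta(U)$ (reduced), so $\Delta(U)$ is open and closed in $\scrD_0$ and $\scrD_0=\Delta(U)\amalg\scrD_0'$ with $\scrD_0'\defeq\scrD_0\setminus\Delta(U)$. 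By $(c)$, $\calV(h_\theta)\cap\scrZ=\calV(\bar t)\cap\scrZ=\scrZ\cap\Delta(U)\subseteq\Delta(U)$, so $\scrD_0'\cap\scrZ=\varnothing$; and $h_\theta-1=\bar t\,u-1$ lies in no maximal ideal of $A$ (it is $\equiv-1$ where $\bar t\in\frakm$, and a unit otherwise by the choice of $u$), so $\calV(h_\theta-1)\cap\scrZ=\varnothing$, i.e.\ $\scrD_1\subseteq\scrX\setminus\scrZ$. This gives $(1)$ and $(2)$. The step I expect to be the main obstacle is condition $(c)$: arranging, with a single global section, that $h_\theta$ vanish on $\scrZ$ exactly along $\scrZ\cap\Delta(U)$ while avoiding the value $1$ everywhere on $\scrZ$; the choice of the unit $u$ — which is where infinitude of $k$ enters — handles the ``$\ne 1$'' half, the relative ampleness of $\calO(\scrX_\infty)$ is what lets Serre vanishing realise the prescribed data on $W$, and matching $(b)$ and $(c)$ consistently on $W$ is then routine.
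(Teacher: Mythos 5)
The paper does not prove this lemma at all---it imports it verbatim from Garkusha--Panin \cite[Lemma 7.1]{hty-inv}---and your reconstruction follows the same route as that source: compactify to the relative projective curve $\ol\scrX\to U$, use relative ampleness of $\calO(\scrX_\infty)$ and Serre vanishing to prescribe a section of $\calO(N\scrX_\infty)$ on the finite subscheme $\scrX_\infty\amalg(2\Delta(U)\cup\scrZ)$, and read off conditions $(1)$ and $(2)$ from the prescribed values. The argument is correct, and the one point you flag as the main obstacle (matching the prescriptions on $2\Delta(U)$ and on $\scrZ$ over their nonempty overlap) does go through: in the semilocal ring $A$ one may choose the unit $u$ congruent to $1$ modulo the ideal of $2\Delta(U)\cap\scrZ$, which is compatible with requiring $u\not\equiv\bar t^{-1}$ at the maximal ideals not containing $\bar t$, since those maximal ideals are comaximal with that ideal.
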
 

We will use \Cref{lemma:nice-map} to produce the desired $\rmMW$-correspondence $\Phi$. The aim is to define $\Phi$ as the image $(H_\theta\times1)_*(p_X)$ of the projection $p_X\in\wt\CH^{d_X}_{\Gamma_{p_X}}(\scrX\times X,\omega_X)$ under the pushforward map
\[
(H_\theta\times1)_*\colon\wt\CH^{d_X}_{\Gamma_{p_X}}(\scrX\times X,\omega_{H_\theta\times1}\otimes\omega_X)\to\wt\CH^{d_X}_{(H_\theta\times1)(\Gamma_{p_X})}(\A^1\times U\times X,\omega_X).
\]
To this end, we need a trivialization of $\omega_{H_\theta\times1}=\omega_{\scrX\times X/k}\otimes(H_\theta\times1)^*\omega_{\A^1\times U\times X/k}^\vee$. Now, as $U$ is local we have $\omega_{U/k}\cong\calO_U$. Keeping in mind the discussion preceding \Cref{lemma:nice-map}, it follows that the relative bundle $\omega_{H_\theta\times1}$ is also trivial.
Thus we may choose an isomorphism $\chi\colon\calO_X\cong\omega_{H_\theta\times1}$.

\begin{definition}
Let $p_X\in\wt\Cor_k(\scrX,X)$ denote the projection. Using the trivialization $\chi$ above, we let $\scrH_\theta^\chi\in\wt\Cor_k(\A^1\times U,X)$ denote the image of $p_X\in\wt\Cor_k(\scrX,X)$ under the composition
\begin{align*}
\wt\CH^{d_X}_{\Gamma_{p_X}}(\scrX\times X,\omega_X)&\xrightarrow{\cong}\wt\CH^{d_X}_{\Gamma_{p_X}}(\scrX\times X,\omega_{H_\theta\times1}\otimes\omega_X)\\
&\xrightarrow{(H_\theta\times1)_*}\wt\CH_{(H_\theta\times1)(\Gamma_{p_X})}^{d_X}(\A^1\times U\times X,\omega_X).
\end{align*}
\end{definition}

\begin{lemma}\label{lemma:support}
The morphism $H_\theta\times1$ maps $\Gamma_{p_X}\cong\scrX$ isomorphically onto its image. Let $\scrH^\chi_0\defeq\scrH^\chi_\theta\circ i_0$ and $\scrH^\chi_1\defeq\scrH^\chi_\theta\circ i_1$. Identifying $\scrX$ with its image in $\A^1\times U\times X$, we then have $\supp\scrH^\chi_\theta=\scrX$, $\supp\scrH^\chi_0=\scrD_0$, and $\supp\scrH^\chi_1=\scrD_1$.
\end{lemma}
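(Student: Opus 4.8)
The plan is to verify the three assertions in turn: that $H_\theta\times1$ restricts to a closed immersion on $\Gamma_{p_X}$, that this forces $\supp\scrH_\theta$ to equal the whole image of $\Gamma_{p_X}$, and finally that $\supp\scrH_0$ and $\supp\scrH_1$ are obtained from $\supp\scrH_\theta$ by pulling back along $i_0\times\id_X$ and $i_1\times\id_X$.

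First I would identify $\Gamma_{p_X}$ with $\scrX$ via the first projection, so that the composite $\scrX\xrightarrow{\sim}\Gamma_{p_X}\xrightarrow{H_\theta\times1}\A^1\times U\times X$ is the map $s\mapsto(h_\theta(s),p_U(s),p_X(s))$. Composing once more with the projection $\A^1\times U\times X\to U\times X$ recovers $(p_U,p_X)\colon\scrX\to U\times X$, which is a closed immersion: since $\scrX=X\times_BU$ and $B$ is an open subscheme of a projective space, hence separated, the canonical morphism $X\times_BU\to X\times U$ is obtained by base change from the diagonal of $B$, and is therefore a closed immersion. Consequently $(H_\theta\times1)|_{\Gamma_{p_X}}$ is a monomorphism; it is moreover finite, being the restriction to the closed subscheme $\Gamma_{p_X}$ of the finite morphism $H_\theta\times\id_X$ (recall that $H_\theta$ is finite by \Cref{lemma:nice-map}). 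A finite monomorphism is a closed immersion, which gives the first assertion. Write $\scrX'\defeq(H_\theta\times1)(\Gamma_{p_X})\subseteq\A^1\times U\times X$ for the image, which we henceforth identify with $\scrX$; concretely $\scrX'=\{(h_\theta(s),p_U(s),p_X(s)):s\in\scrX\}$.

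Next, $\scrH_\theta$ is by construction the image of the class $p_X$, which has $\supp p_X=\Gamma_{p_X}$, under the pushforward $(H_\theta\times1)_*\colon\wt\CH^{d_X}_{\Gamma_{p_X}}(\scrX\times X,\omega_{H_\theta\times1}\otimes\omega_X)\to\wt\CH^{d_X}_{\scrX'}(\A^1\times U\times X,\omega_X)$. Because $H_\theta\times1$ carries $\Gamma_{p_X}$ isomorphically onto $\scrX'$, this pushforward is an isomorphism inverting the corresponding pullback, in the same spirit as \Cref{lemma:iso} (groups with supports depend only on an étale neighborhood of the support); it therefore takes the fundamental class $p_X$ to a class that is nonzero along every component of $\scrX'$, whence $\supp\scrH_\theta=\scrX'$, i.e. $\supp\scrH_\theta=\scrX$. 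Then, for $j\in\{0,1\}$, we have $\scrH_j=\scrH_\theta\circ i_j=(i_j\times\id_X)^*\scrH_\theta$, so by the compatibility of supports with pullbacks of $MW$-correspondences (\cite[Lemmas 4.8, 4.10]{Calmes-Fasel}, as already invoked for \Cref{lemma:making-pairs}) we obtain $\supp\scrH_j=(i_j\times\id_X)^{-1}(\scrX')$. Since $i_j\times\id_X$ sends $(u,y)$ to $(j,u,y)$ and a point of $\scrX'$ has the form $(h_\theta(s),p_U(s),p_X(s))$, this preimage is the closed subscheme $\{s\in\scrX:h_\theta(s)=j\}$, which is exactly the scheme-theoretic fiber $H_\theta^{-1}(j\times U)=\scrD_j$. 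Specializing to $j=0$ and $j=1$ yields $\supp\scrH_0=\scrD_0$ and $\supp\scrH_1=\scrD_1$.

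The step requiring genuine care is the first one: verifying that $(H_\theta\times1)|_{\Gamma_{p_X}}$ is a monomorphism --- whence, being finite, a closed immersion onto its image --- for which both the separatedness of $B$ (used to see that $\scrX\hookrightarrow X\times U$) and the finiteness of $H_\theta$ are needed. The remaining steps are formal manipulations with supports of $MW$-correspondences, relying only on results already available in the paper and in \cite{Calmes-Fasel}.
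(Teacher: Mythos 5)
Your proof is correct and follows essentially the same route as the paper: identify $\Gamma_{p_X}$ with $\scrX$, observe that $H_\theta\times 1$ sends $(x,u)$ to $(h_\theta(x,u),u,x)$ and hence embeds $\scrX$ into $\A^1\times U\times X$, deduce $\supp\scrH_\theta$ from the pushforward, and compute $\supp\scrH_\epsilon$ as the preimage under $i_\epsilon\times 1$, i.e.\ the locus $h_\theta=\epsilon$, which is $\scrD_\epsilon$. Your justification of the first claim (closed immersion via the separatedness of $B$ and finiteness of $H_\theta$) is in fact more careful than the paper's, which only verifies injectivity on points.
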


\begin{proof}
If $y=((x,u),x)$, $y'=((x',u'),x')\in\Gamma_{p_X}$ is such that
\[
(H_\theta\times1)(y)=(h_\theta(x,u),u,x)=(H_\theta\times1)(y')=(h_\theta(x',u'),u',x'),
\]
it follows that $x=x'$ and $u=u'$, hence $y=y'$. Thus we can consider $\scrX$ as a subscheme of $\A^1\times U\times X$ by $(x,u)\mapsto(h_\theta(x,u),u,x)$. Now, the $\rmMW$-correspondence $p_X$ is supported on $\Gamma_{p_X}$, hence $\supp\scrH^\chi_\theta=(H_\theta\times1)(\Gamma_{p_X})\cong\scrX$. We turn to the restrictions $\scrH^\chi_0$ and $\scrH^\chi_1$ of the homotopy $\scrH^\chi_\theta$. By \cite[Example 4.17]{Calmes-Fasel} we have $\scrH^\chi_\theta\circ i_\epsilon=(i_\epsilon\times1)^*(\scrH^\chi_\theta)$, where $\epsilon=0,1$. It follows that $\supp\scrH^\chi_\epsilon=(i_\epsilon\times1)^{-1}((H_\theta\times1)(\Gamma_{p_X})),$ and this closed subset is determined by those points $(x,u)\in\scrX$ satisfying $h_\theta(x,u)=\epsilon$. In other words, $\supp\scrH^\chi_\epsilon=\scrD_\epsilon$.
\end{proof}

\begin{lemma}\label{lemma:finding-Phi}
There is an invertible regular function $\lambda$ on $U$ such that \[\scrH_0^\chi=\can\circ\ip{\lambda}+j\circ \Phi_0'\] and \[\scrH_1^\chi=j\circ\Phi_1,\] where $\Phi_0',\Phi_1\in\wt\Cor_k(U,X\setminus Z)$ and $\ip\lambda\in\K^{\rmMW}_0(U)$.
\end{lemma}

\begin{proof}
By Lemmas \ref{lemma:nice-map} and \ref{lemma:support} we have $\supp\scrH^\chi_0=\Delta(U)\amalg\scrD_0'$, where $\scrD_0'\subseteq\scrX\setminus\scrZ$. By \Cref{lemma:conn-comp-sum} we may therefore write $\scrH^\chi_0=\alpha+\beta$ where $\alpha\in\wt\Cor_k(U,X)$ is supported on $\Delta(U)$ and $\beta\in\wt\Cor_k(U,X)$ is supported on $\scrD_0'$. Since $\supp\beta=\scrD_0'\subseteq\scrX\setminus\scrZ$, \Cref{cor:supp-open} ensures that there exists a unique finite $\rmMW$-correspondence  $\Phi_0'\in\wt\Cor_k(U,X\setminus Z)$ such that $j\circ\Phi_0'=\beta$. Hence $\scrH^\chi_0$ is of the form $\scrH^\chi_0=\alpha+j\circ\Phi_0'$ for $\Phi_0'\in\wt\Cor_k(U,X\setminus Z)$. The same reasoning shows that, since $\supp\scrH^\chi_1= \scrD_1\subseteq\scrX\setminus\scrZ$, there is a unique $\rmMW$-correspondence $\Phi_1\in\wt\Cor_k(U,X\setminus Z)$ such that $\scrH^\chi_1=j\circ\Phi_1$. 

It therefore only remains to understand the finite $\rmMW$-correspondence $\alpha\in\wt\CH_{\Delta(U)}^{d_X}(U\times X,\omega_{X})$. Recall that, by definition, 
\[
\scrH^\chi_0=(i_0\times1)^*(H_\theta\times1)_*(\Gamma_{p_X})_*(\ip1).
\]
Let $i_{\Delta(U)}$ and $i_{\scrD_0}$ denote the respective inclusions $i_{\Delta(U)}\colon\Delta(U)\subseteq \scrX$ and $i_{\scrD_0}\colon\scrD_0\subseteq\scrX$. The base change formula \cite[Proposition 3.2]{Calmes-Fasel} applied to the pullback square
\[\begin{tikzcd}
(\Delta(U)\amalg\scrD_0')\times X\ar{r}{i_{\scrD_0}\times1}\ar{d}[swap]{H_\theta|_{\scrD_0}\times1} & \scrX\times X\ar{d}{H_\theta\times1}\\
U\times X\ar{r}{i_0\times1} & \A^1\times U\times X
\end{tikzcd}\]
reveals that $\alpha=(H_\theta|_{\Delta(U)}\times1)_*(i_{\Delta(U)}\times1)^*(\Gamma_{p_X})_*(\ip1)$. Using that $\Delta\colon U\to\scrX$ is an isomorphism onto its image and that $H_\theta|_{\Delta(U)}\colon\Delta(U)\to U$ is an isomorphism, we may write $\alpha=(\Delta\times1)^*(\Gamma_{p_X})_*(\ip1)$. Next, consider the pullback diagram
\[\begin{tikzcd}
U\ar{rr}{\Delta}\ar{d}[swap]{\Gamma_\can} & & \scrX\ar{d}{\Gamma_{p_X}}\\
U\times X\ar{rr}{\Delta\times1} & &\scrX\times X.
\end{tikzcd}\]
Using base change once more, we obtain $\alpha=(\Gamma_\can)_*\Delta^*(\ip1)$. Comparing this expression with the definition $\wt\gamma_\can\defeq(\Gamma_\can)_*(\ip1)$ of $\wt\gamma_\can$, we see that two possibly different trivializations of the line bundle $\omega_U$ are involved. Letting $\lambda\in k[U]^\times$ be the fraction of these two trivializations, it follows that $\alpha=\wt\gamma_\can\circ\ip{\lambda}$.
\end{proof}

\begin{proof}[Proof of \Cref{thm:inj-loc-sch}]
In the notation of \Cref{lemma:finding-Phi}, define $\scrH_\theta\defeq\scrH_\theta^\chi\circ\ip{\lambda^{-1}}$ and $\Phi\defeq(\Phi_1-\Phi_0')\circ\ip{\lambda^{-1}}$. By \Cref{lemma:finding-Phi}, $\scrH_\theta$ provides a homotopy $\can\sim_{\A^1}j\circ\Phi$.
\end{proof}

\section{Nisnevich excision}\label{section:Nis-excision}
The setting of this section is as follows. Suppose that $X,X'\in\Sm_k$ are smooth affine $k$-schemes such that there is an elementary distinguished Nisnevich square
\begin{equation}
\begin{tikzcd}
V'\ar{r}\ar{d} & X'\ar[d,"\Pi"]\\
V\ar{r} & X.
\end{tikzcd}\label{eq:Nis-square}
\end{equation}
Define the closed subschemes $S\defeq (X\setminus V)_\red\subseteq X$ and $S'\defeq(X'\setminus V')_\red\subseteq X'$. Let $x\in S$ and $x'\in S'$ be two points satisfying $\Pi(x')=x$. Moreover, we set $U\defeq\Spec(\calO_{X,x})$ and $U'\defeq\Spec(\calO_{X',x'})$. Let $\can\colon U\to X$ and $\can'\colon U'\to X'$ be the canonical inclusions and let $\pi\defeq\Pi|_{U'}\colon U'\to U$. We can summarize the situation with the following diagram:
\begin{equation}
\begin{tikzcd}
V'\ar{r}\ar{d} & X'\ar[d,"\Pi"] & U'\ar[l,swap,"\can'"]\ar[d,"\pi"]\\
V\ar{r} & X & U.\ar[l,"\can"]
\end{tikzcd}\label{eq:Nis-square2}
\end{equation}
The main result of this section is the following excision theorem for Nisnevich squares.

\begin{theorem}[Nisnevich excision]\label{thm:Nis-excision}
Let $\scrF$ be a homotopy invariant presheaf on $\wt\Cor_k$. Given any elementary distinguished Nisnevich square as \eqref{eq:Nis-square}, suppose in addition that the closed subscheme $S$ is smooth over $k$. Then the induced morphism
\[
\pi^*\colon \dfrac{\scrF(U\setminus S)}{\scrF(U)}\to \dfrac{\scrF(U'\setminus S')}{\scrF(U')}
\]
is an isomorphism.
\end{theorem}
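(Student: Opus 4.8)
The proof occupies Sections \ref{section:Nis-inj} and \ref{section:Nis-surj}; I outline the strategy here. The plan is to mirror the three-step structure of the proof of \Cref{thm:Zar-excision}. As there, I would first reduce \Cref{thm:Nis-excision} to two statements: a \emph{Nisnevich injectivity} statement, that there exists a finite $MW$-correspondence $\Phi\in\wt\Cor_k^\pair((U,U\setminus S),(U',U'\setminus S'))$ with $\ol\pi\circ\ol\Phi=\id_U$ in $h\wt\Cor_k^\pair$; and a \emph{Nisnevich surjectivity} statement, that there exist $\Psi\in\wt\Cor_k^\pair((U,U\setminus S),(U',U'\setminus S'))$ and $\Theta\in\wt\Cor_k^\pair((U',U'\setminus S'),(U'\setminus S',U'\setminus S'))$ with $\ol\Psi\circ\ol\pi-\ol{j_{U'}}\circ\ol\Theta=\id_{U'}$ in $h\wt\Cor_k^\pair$, where $j_{U'}\colon U'\setminus S'\to U'$ is the inclusion. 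Granting these, the theorem follows verbatim as in the proof of \Cref{thm:Zar-excision}: by \Cref{prop:construction} the three correspondences descend to maps on the relative groups $\scrF(-,-)$; the injectivity statement yields $\Phi^*\circ\pi^*=\id$, so $\pi^*$ is a monomorphism; and since $\scrF(U'\setminus S',U'\setminus S')=\scrF(U'\setminus S')/\scrF(U'\setminus S')=0$, the composite $\ol{j_{U'}}\circ\ol\Theta$ induces the zero map, so the surjectivity statement yields $\pi^*\circ\Psi^*=\id$, whence $\pi^*$ is an epimorphism.

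To prove these two statements I would set up the geometry as in \Cref{section:injectivity}. Since the relative groups $\scrF(U\setminus S)/\scrF(U)$ and $\scrF(U'\setminus S')/\scrF(U')$ depend only on the local data at $x$ and $x'$, I may replace $X$ by a Zariski neighbourhood of $x$ and $X'$ by its preimage under $\Pi$; then \cite[Proposition 1]{PSV} applied to $(X,S,x)$ lets me assume there is an almost elementary fibration $p\colon X\to B$ with $\omega_{X/k}\cong\calO_X$ and $\omega_{B/k}\cong\calO_B$, and --- using that $\Pi$ is étale and restricts to an isomorphism $S'\xrightarrow{\cong}S$ --- that $p\circ\Pi\colon X'\to B$ is also fibred in curves, with $S'$ finite over $B$ and $\omega_{X'/k}\cong\calO_{X'}$. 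The question is now about divisors on relative curves over $B$, and I would invoke the geometric input of \cite[§13]{hty-inv}: from the distinguished square of relative curves it produces a scheme $\scrX$ assembled from $U$, $X'$ and $B$, a finite surjective morphism $H_\theta\colon\scrX\to\A^1\times U$ over $U$, and, writing $\scrZ\subseteq\scrX$ for the preimage of $S'$, scheme-theoretic fibres $\scrD_1\defeq H_\theta^{-1}(1\times U)$ and $\scrD_0\defeq H_\theta^{-1}(0\times U)$ with $\scrD_1\subseteq\scrX\setminus\scrZ$ and $\scrD_0=\Gamma\amalg\scrD_0'$, where $\Gamma$ is the graph of a suitable lift of $\pi$ and $\scrD_0'\subseteq\scrX\setminus\scrZ$. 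For the surjectivity statement one runs the analogous construction over $U'$, incorporating an extra divisor that plays the role of the polynomial $F_{m-1}$ of \Cref{section:Zar-surj}, so that the resulting homotopy relates $\ol\Psi\circ\ol\pi$ to $\id_{U'}$ plus a term factoring through $U'\setminus S'$.

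With this geometric input in hand, the correspondences are manufactured as in \Cref{section:injectivity} (and \Cref{section:Zar-inj}, \Cref{section:Zar-surj}): after trivializing the relevant relative canonical bundle (using $\omega_{U/k}\cong\calO_U$), one pushes the class of a suitable projection out of $\scrX$ forward along $H_\theta\times1$ to obtain a homotopy $\scrH_\theta$; by \Cref{lemma:conn-comp-sum} and \Cref{cor:supp-open} its endpoints decompose, along the splitting $\scrD_0=\Gamma\amalg\scrD_0'$ and the inclusion $\scrD_1\subseteq\scrX\setminus\scrZ$, into a part coming from $\Gamma$ --- which one identifies, by base change exactly as for $\alpha$ in the proof of \Cref{lemma:finding-Phi}, with the graph correspondence of the relevant lift of $\pi$, and which accounts for $\id_U$ --- plus parts factoring through $V'=X'\setminus S'$ and hence through $U'\setminus S'$; the relevant supports avoid $S'$ exactly because $\Pi$ restricts to an isomorphism $S'\xrightarrow{\cong}S$, so \Cref{lemma:making-pairs} and \Cref{lemma:cartier-pair} upgrade everything to morphisms of pairs, and one extracts $\Phi$ (respectively $\Psi$, $\Theta$) as the appropriate differences, just as in the proofs of \Cref{thm:Zar-inj} and \Cref{thm:Zar-surj}. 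I expect the genuinely hard part to be the geometric input of \cite[§13]{hty-inv}: building a scheme $\scrX$ and a morphism $H_\theta$ that is at once finite surjective over $\A^1\times U$, has fibre over $1$ disjoint from the preimage of $S'$, and has fibre over $0$ equal to a graph of $\pi$ disjoint union a piece missing the preimage of $S'$ --- and then verifying that, after pushforward, these data actually define morphisms in $\wt\Cor_k^\pair$, which is the one point where the full strength of the elementary distinguished square hypothesis, not just étaleness of $\Pi$, enters. A secondary, bookkeeping difficulty is carrying the twist $\omega_{X'}$ correctly through the pushforward so that the homotopies land in $\wt\Cor_k$ with the right twisting line bundle.
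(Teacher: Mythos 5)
Your strategy coincides with the paper's: it reduces to precisely such injectivity and surjectivity statements in $h\wt\Cor_k^\pair$ (Theorems \ref{thm:Nis-inj} and \ref{thm:Nis-surj}), manufactures the correspondences by pushing the projection class forward along the finite surjective morphism $H_\theta\times 1$ supplied by Garkusha--Panin's geometric lemmas over an almost elementary fibration, splits the endpoints of the resulting homotopy according to the decomposition of their supports, and identifies the diagonal piece with $\can$ by base change, the error terms dying on the relative groups. One small correction: unlike the Zariski case, the Nisnevich injectivity step cannot be stated as $\ol\pi\circ\ol\Phi=\id$ on the nose, since the fibre over $0$ contains an extra component $G$ supported away from $S$; the correct statement is $\ol\Pi\circ\ol\Phi-\ol{j_X}\circ\ol\Theta=\ol\can$ with an error term factoring through $X\setminus S$ (which, as you observe for the surjectivity half, induces zero on the quotients), and your own subsequent description of the construction already accounts for this.
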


The proof of \Cref{thm:Nis-excision} relies on the two following results, establishing respectively injectivity and surjectivity of $\pi^*$:

\begin{theorem}[Injectivity of Nisnevich excision]\label{thm:Nis-inj}
With the notations in \eqref{eq:Nis-square2}, there exist finite $\rmMW$-correspondences $\Phi\in\wt\Cor_k^\pair((U,U\setminus S),(X',X'\setminus S'))$ and $\Theta\in\wt\Cor_k^\pair((U,U\setminus S),(X\setminus S,X\setminus S))$ such that 
\[
\ol\Pi\circ\ol\Phi-\ol j_X\circ\ol\Theta=\ol\can
\]
in $\hwtCor_k^\pair((U,U\setminus S),(X,X\setminus S))$. Here $j_X\colon(X\setminus S,X\setminus S)\hookrightarrow(X,X\setminus S)$ is the inclusion.
\end{theorem}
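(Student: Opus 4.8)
The plan is to imitate the proof of \Cref{thm:inj-loc-sch}, upgrading it so that the $\A^1$-homotopy it produces is a morphism of pairs and so that its terminal value factors through $X'$. As in the reduction of \Cref{section:injectivity} (cf.\ \cite[§7]{hty-inv}), it suffices to prove the assertion after replacing $X$ by an affine open neighbourhood of $x$, with $X'$, $V$, $V'$, $S$, $S'$ restricted accordingly: the elementary distinguished Nisnevich square and the relation $\Pi(x')=x$ survive such a shrinking, while $U$ and $U'$ are unchanged. By Artin's theorem \cite[Proposition 1]{PSV} I may then assume that $X$ carries an almost elementary fibration $p\colon X\to B$ with $S$ finite over $B$, $S\cap X_\infty=\varnothing$, $\omega_{X/k}\cong\calO_X$ and $\omega_{B/k}\cong\calO_B$, and (after one further shrinking) that $x$ is the only point of $S$ lying over $b\defeq p(x)$. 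Put $\scrX\defeq X\times_B U$, with relative compactification $\ol{\scrX}\defeq\ol X\times_B U$ (a smooth projective relative curve over $U$), $\scrZ\defeq S\times_B U$, and let $\Delta\defeq(\can,\id_U)\colon U\to\scrX$ be the resulting section.

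The geometric heart of the matter is the Nisnevich-square refinement, due to Garkusha--Panin \cite[§13]{hty-inv}, of the lemma behind \Cref{lemma:nice-map}. It produces, by a Chinese-remainder argument on the relative curve $\A^1\times\ol{\scrX}$ ($\theta$ being the coordinate on $\A^1$), a rational function $H_\theta$ whose zero divisor $\calV(H_\theta)\subseteq\A^1\times\scrX$ is finite and surjective over $\A^1\times U$ and such that:
\begin{itemize}
\item[$(1)$] $\scrD_0\defeq\calV(H_0)=\Delta(U)\amalg\scrD_0'$, with $H_0$ of order $1$ along $\Delta(U)$ and $\scrD_0'\cap\scrZ=\varnothing$;
\item[$(2)$] $\scrD_1\defeq\calV(H_1)$ is disjoint from $\scrZ$, and the composite $\scrD_1\hookrightarrow\scrX\xrightarrow{p_X}X$ lifts along $\Pi$ to a morphism $q\colon\scrD_1\to X'$;
\item[$(3)$] $H_\theta$ is invertible on $(\A^1\times(U\setminus S))\times_B S$.
\end{itemize}
Conditions $(1)$--$(2)$ are those used in \Cref{section:injectivity}, the extra lift in $(2)$ coming from $\Pi$ being an isomorphism over $S$ together with the freedom, after the deformation, to confine $\scrD_1$ to a locus over which the étale map $X'\times_B U\to\scrX$ admits a section; condition $(3)$ is the new ingredient that enables the pair structure. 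Using $\omega_{B/k}\cong\calO_B$ to trivialise the relevant relative dualising sheaves, let $\scrH_\theta\in\wt\Cor_k(\A^1\times U,X)$ be the finite $MW$-correspondence attached to $\calV(H_\theta)$ and the generator $dY$ by the relative-curve version of the construction in \Cref{section:cartier} (cf.\ \cite[§2]{MW-cancel} and \Cref{section:injectivity}); its support is $\calV(H_\theta)$. By $(3)$, $\supp\scrH_\theta\cap\bigl((\A^1\times(U\setminus S))\times X\bigr)\subseteq(\A^1\times(U\setminus S))\times(X\setminus S)$, so the pair analogue of \Cref{lemma:cartier-pair} makes $\scrH_\theta$ a morphism of pairs $\A^1\times(U,U\setminus S)\to(X,X\setminus S)$.

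It remains to read off the endpoints, as in the proof of \Cref{lemma:finding-Phi}. By (the relative version of) \Cref{lemma:cartier-functoriality}, $\scrH_0\defeq\scrH_\theta\circ i_0$ is supported on $\scrD_0=\Delta(U)\amalg\scrD_0'$, so \Cref{lemma:conn-comp-sum} splits it as $\scrH_0=\alpha+\beta$ with $\supp\alpha=\Delta(U)$, $\supp\beta=\scrD_0'$; since $H_0$ has order $1$ along $\Delta(U)$ the base-change computation of \Cref{lemma:finding-Phi} identifies $\alpha$ with $\wt\gamma_\can=\can$, and since $\scrD_0'\cap\scrZ=\varnothing$ \Cref{cor:supp-open} yields a unique $\Theta\in\wt\Cor_k(U,X\setminus S)$ with $j_X\circ\Theta=\beta$. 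Thus $\scrH_0=\can+j_X\circ\Theta$, and --- using the uniqueness in \Cref{lemma:conn-comp-sum} also over $U\setminus S$ --- this identity holds at the level of morphisms of pairs. Similarly $\scrH_1\defeq\scrH_\theta\circ i_1$ is supported on $\scrD_1$; transporting it through the isomorphism $q(\scrD_1)\xrightarrow{\cong}\scrD_1$ furnished by $(2)$ produces $\Phi\in\wt\Cor_k(U,X')$, supported on $q(\scrD_1)$, with $\Pi\circ\Phi=\scrH_1$, and since $q(\scrD_1)\subseteq U\times V'=U\times(X'\setminus S')$ (and $\supp\Theta\subseteq U\times(X\setminus S)$) both $\Phi$ and $\Theta$ are automatically morphisms of pairs, into $(X',X'\setminus S')$ and $(X\setminus S,X\setminus S)$ respectively. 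Finally, as $\scrH_\theta$ is a morphism of pairs, $\ol{\scrH_0}=\ol{\scrH_1}$ in $h\wt\Cor_k^\pair((U,U\setminus S),(X,X\setminus S))$; combined with $\scrH_0=\can+j_X\circ\Theta$ and $\scrH_1=\Pi\circ\Phi$ this yields $\ol\Pi\circ\ol\Phi-\ol{j_X}\circ\ol\Theta=\ol\can$.

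The main obstacle is the second step: prescribing a single $H_\theta$ that vanishes to order one along the section $\Delta(U)$, keeps $\scrD_0'$ and $\scrD_1$ off the horizontal divisor $\scrZ$, stays invertible on $(\A^1\times(U\setminus S))\times_B S$, and has $\scrD_1$ lie in the split locus of the étale map $X'\times_B U\to\scrX$. That these requirements can be met simultaneously by the Chinese remainder theorem hinges on the corresponding prescription loci being pairwise disjoint, which is exactly what the reduction to an almost elementary fibration (with $x$ the unique point of $S$ over $b$) is designed to secure; this is the content of \cite[§13]{hty-inv}. The remaining ingredients --- the relative-curve bookkeeping of dualising-sheaf twists and trivialisations, and the base-change identification of the diagonal component with $\can$ --- are routine adaptations of \Cref{section:cartier} and \Cref{lemma:finding-Phi}.
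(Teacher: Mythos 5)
Your proposal is correct and follows essentially the same route as the paper: after the Artin-type shrinking to an almost elementary fibration, both arguments invoke the Garkusha--Panin function $h_\theta$ on the relative curve (the paper's \Cref{prop:fu}, whose condition $(c)$ is exactly your liftability requirement in $(2)$ and whose \Cref{cor:subsets} is your condition $(3)$), build the homotopy $\scrH_\theta$ from it, identify $\scrH_0=\can+j_X\circ\Theta$ via the base-change computation and $\scrH_1=\Pi\circ\Phi$ via the \'etale-excision isomorphism on the component $Z_1'\cong Z_1$, and then verify the pair structures from the support inclusions over $U\setminus S$. The only cosmetic difference is that you package $\scrH_\theta$ as a relative Cartier-divisor class, whereas the paper realizes it as $(1\times i_0\times1)^*(H_\theta\times1)_*(\Gamma_{p_X})_*(\ip1)$, making the needed pushforward and the trivialization of $\omega_{H_\theta\times1}$ explicit.
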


\begin{theorem}[Surjectivity of Nisnevich excision]\label{thm:Nis-surj}
With the notations in \eqref{eq:Nis-square2}, assume in addition that $S$ is smooth over $k$. Then there exist finite $\rmMW$-correspondences $\Psi\in\wt\Cor^\pair_k((U,U\setminus S),(X',X'\setminus S'))$ and $\Xi\in\wt\Cor_k^\pair((U',U'\setminus S'),(X'\setminus S',X'\setminus S'))$ such that 
\[
\ol\Psi\circ\ol\pi-\ol j_{X'}\circ\ol{\Xi}=\ol{\can'}
\]
in $\hwtCor_k^\pair((U',U'\setminus S'),(X',X'\setminus S'))$. Here $j_{X'}\colon (X'\setminus S',X'\setminus S')\hookrightarrow (X',X'\setminus S')$ is the inclusion.
\end{theorem}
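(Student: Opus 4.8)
The plan is to mirror the proof of \Cref{lemma:finding-Phi}, with the affine line replaced by the relative curve attached to an almost elementary fibration, and the Chinese-remainder polynomials of \Cref{section:Zar-surj} replaced by the geometric input of \cite[§13]{hty-inv}. As in the deduction of \Cref{thm:Zar-excision} from \Cref{thm:Zar-inj} and \Cref{thm:Zar-surj}, the present statement is exactly what yields surjectivity of $\pi^*$: applying a homotopy invariant $\scrF$ and using $\scrF(X'\setminus S',X'\setminus S')=\scrF(X'\setminus S')/\scrF(X'\setminus S')=0$ annihilates $\ol j_{X'}\circ\ol\Xi$, so $(\can')^*=\pi^*\circ\Psi^*$, and $(\can')^*\colon\scrF(X',X'\setminus S')\to\scrF(U',U'\setminus S')$ is the canonical map to the colimit $\colim_{X'^\circ\ni x'}\scrF(X'^\circ,X'^\circ\setminus S'^\circ)$, hence onto. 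In particular we are free to shrink $X$ around $x$, since restricting the square to an open $X^\circ\ni x$ and setting $X'^\circ\defeq\Pi^{-1}(X^\circ)$ again gives an elementary distinguished Nisnevich square with $S^\circ\xrightarrow{\sim}S'^\circ$. Using that $S$ is smooth together with the fibration theorem quoted before \Cref{lemma:nice-map}, I would shrink $X$ so that there is an almost elementary fibration $p\colon X\to B$ with $\omega_{X/k}\cong\calO_X$, $\omega_{B/k}\cong\calO_B$ and $S$ finite over $B$. Put $\scrX\defeq X\times_B U$, $\scrX'\defeq X'\times_B U'$ (with $X'\to B$ equal to $p\circ\Pi$), $\scrZ'\defeq S'\times_B U'\subseteq\scrX'$, and let $\Delta'\defeq(\can',\id)\colon U'\to\scrX'$ be the canonical section.

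Next I would invoke \cite[§13]{hty-inv} for a finite surjective $U'$-morphism $H_\theta\colon\scrX'\to\A^1\times U'$ with the following properties, where $\scrD_\epsilon\defeq H_\theta^{-1}(\{\epsilon\}\times U')$ for $\epsilon=0,1$: one has $\scrD_1\subseteq\scrX'\setminus\scrZ'$ and $\scrD_0=\Delta'(U')\amalg\scrD_0'$ with $\scrD_0'\subseteq\scrX'\setminus\scrZ'$, and moreover $\scrD_1$ together with the restriction of $H_\theta$ over $\theta=1$ is pulled back along $\pi\colon U'\to U$ from the relative curve $X'\times_B U$ over $U$. Then, exactly as in the proof of \Cref{lemma:finding-Phi}, I would choose a trivialisation of $\omega_{H_\theta\times1}$ — available since $U'$ is local and $\omega_{X/k}$ is trivial — and let $\scrH_\theta\in\wt\Cor_k(\A^1\times U',X')$ be the image of the projection $p_{X'}\colon\scrX'\to X'$, supported on $\Gamma_{p_{X'}}\cong\scrX'$, under the pushforward along $H_\theta\times1$. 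The conditions $\scrD_\epsilon\subseteq\scrX'\setminus\scrZ'$ and $\Delta'(U'\setminus S')\cap\scrZ'=\varnothing$ promote $\scrH_\theta$ to a morphism of pairs $\A^1\times(U',U'\setminus S')\to(X',X'\setminus S')$ by \Cref{lemma:making-pairs}.

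It then remains to compute the endpoints $\scrH_0\defeq\scrH_\theta\circ i_0$ and $\scrH_1\defeq\scrH_\theta\circ i_1$ via the base change formula \cite[Proposition 3.2]{Calmes-Fasel}, as in \Cref{lemma:support} and \Cref{lemma:finding-Phi}. The endpoint $\scrH_1$ is supported on $\scrD_1\subseteq\scrX'\setminus\scrZ'$ and, being pulled back along $\pi$, factors as $\scrH_1=\Psi\circ\pi$ for a unique $\Psi\in\wt\Cor_k^\pair((U,U\setminus S),(X',X'\setminus S'))$. The endpoint $\scrH_0$ is supported on $\Delta'(U')\amalg\scrD_0'$, hence splits by \Cref{lemma:conn-comp-sum} as $\scrH_0=\alpha+\beta$ with $\alpha$ supported on $\Delta'(U')$ and $\beta$ on $\scrD_0'$; the base change argument along the graph of $\can'$ used in \Cref{lemma:finding-Phi} identifies $\alpha=\wt\gamma_{\can'}$, while \Cref{cor:supp-open} gives $\beta=j_{X'}\circ\Xi$ for a unique $\Xi\in\wt\Cor_k^\pair((U',U'\setminus S'),(X'\setminus S',X'\setminus S'))$. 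Since $\scrH_\theta$ is an $\A^1$-homotopy of pairs, $\ol{\scrH_0}=\ol{\scrH_1}$ in $h\wt\Cor_k^\pair((U',U'\setminus S'),(X',X'\setminus S'))$, that is $\ol{\can'}+\ol j_{X'}\circ\ol\Xi=\ol\Psi\circ\ol\pi$, which is the assertion.

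The hard part will be the descent of the $\theta=1$ data along $\pi$: one must extract from \cite[§13]{hty-inv} a model of the geometric input in which $\scrD_1$ and the morphism computing $\scrH_1$ genuinely come from the relative curve over $U$, not merely over $U'$. This is the step where the elementary Nisnevich square and the identification $S\cong S'$ are indispensable, and it is the analogue of the polynomial $G_m$ of \Cref{lemma:CRT2} being defined over $U$ rather than over $V$; without it one recovers only a relation involving $\pi$ postcomposed with a correspondence living over $U'$, which is insufficient for surjectivity of $\pi^*$. The remaining points — that $\scrH_\theta$, $\Psi$ and $\Xi$ are honest morphisms of pairs, and that the choice of trivialisation of $\omega_{H_\theta\times1}$ is immaterial — are handled verbatim as in \Cref{section:injectivity}, and the smoothness of $S$ enters only in the geometric reduction above, to place $S$ in good position with respect to $p$.
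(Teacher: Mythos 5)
Your proposal is correct and follows essentially the same route as the paper: shrink to an almost elementary fibration with trivialized canonical bundles, import from Garkusha--Panin the finite surjective map over the relative curve whose $\theta=0$ fiber splits off the diagonal and whose $\theta=1$ fiber descends along $\pi$ to data over $U$ (this is exactly \Cref{prop:fu2}$(b)$, $(c)$, $(e)$), push forward the graph of the projection to get the homotopy, and compute the endpoints by base change and étale excision. The only differences are cosmetic: you package the geometric input as a single map $H_\theta\colon\scrX'\to\A^1\times U'$ in the style of \Cref{lemma:nice-map}, whereas the paper uses a function $h_\theta'$ on $\A^1\times U'\times_B X'$ with a separate homotopy parameter, and your support conditions for the pair structure at intermediate $\theta$ are stated slightly loosely but are supplied by \Cref{prop:fu2}$(d)$.
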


Assuming Theorems \ref{thm:Nis-inj} and \ref{thm:Nis-surj}, \Cref{thm:Nis-excision} now follows:

\begin{proof}[Proof of \Cref{thm:Nis-excision}]\label{pf:Nis-exc}
  Let $\scrF$ be a homotopy invariant presheaf with $\rmMW$-transfers. First, note that \Cref{thm:inj-loc-sch} implies that the restriction maps $\scrF(U)\to \scrF(U\setminus S)$ and $\scrF(U')\to\scrF(U'\setminus S')$ are injective. Indeed, suppose that $s_x\in\scrF(U)$ maps to $0$ in $\scrF(U\setminus S)$. We may assume that $s_x$ is represented by a section $s\in\scrF(W)$ for some Zariski open neighborhood $W$ of $x$, such that $s|_{W\setminus S}=0$. But then $s_x=0$ by \Cref{cor:vanishing-section}. Hence $\scrF(U)\to \scrF(U\setminus S)$ is injective. It follows similarly that $\scrF(U')\to \scrF(U'\setminus S')$ is injective.

  Now, as the MW-correspondence $\Theta$ of \Cref{thm:Nis-inj} maps to $(X\setminus S,X\setminus S)$, $j_X\circ\Theta$ induces the trivial map
\[
(j_X\circ\Theta)^*=0\colon \dfrac{\scrF(X\setminus S)}{\scrF(X)}\to \dfrac{\scrF(U\setminus S)}{\scrF(U)}.
\]
Hence $\Phi^*\circ\Pi^*=\can^*$. Similarly, $\Xi^*=0$ and hence $\pi^*\circ\Psi^*=(\can')^*$. We use this to show that $\pi^*$ is an isomorphism.

To show that $\pi^*$ is injective, let us assume that $s_x\in\scrF(U\setminus S)/\scrF(U)$ is a germ such that $\pi^*(s_x)=0$. As 
\[
\dfrac{\scrF(U\setminus S)}{\scrF(U)}=\varinjlim_{W\ni x}\dfrac{\scrF(W\setminus S)}{\scrF(W)},
\]
we may assume that $s_x$ is represented by a section $s\in\scrF(W\setminus S)/\scrF(W)$ for some affine $k$-smooth Zariski open neighborhood $W$ of $x$. 
Thus $s$ is a section satisfying $\can^*(s)=s_x$ and $\pi^*(s_x)=0$. Now, since $\pi^*(s_x)=0$ in $\scrF(U'\setminus S')/\scrF(U')$, there is some affine $k$-smooth Zariski open neighborhood $W'$ of $x'$ in $X'\times_X W$ such that $\Pi^*(s)|_{W'}=0$. Replacing $X$ by $W$ and $X'$ by $W'$, we may then apply \Cref{thm:Nis-inj} to obtain a finite $\rmMW$-correspondence $\Phi\in\wt\Cor_k(U,X')$ such that $\Phi^*\circ\Pi^*=\can^*$. But then $s_x=\can^*(s)=\Phi^*(\Pi^*(s))=0$. Hence $\pi^*$ is injective.

To show surjectivity, let $s'_{x'}\in\scrF(U'\setminus S')/\scrF(U')$. Similarly as above, we may assume that $s'_{x'}$ is represented by a section $s'\in\scrF(X'\setminus S')/\scrF(X')$, i.e., $(\can')^*(s')=s'_{x'}$. By \Cref{thm:Nis-surj}, there is a finite MW-correspondence $\Psi\in\wt\Cor_k(U,X')$ such that $\pi^*\circ\Psi^*=(\can')^*$. We then have $s_{x'}'=(\can')^*(s')=\pi^*(\Psi^*(s'))$, and thus $\pi^*$ is surjective.
\end{proof}

We proceed to prove Theorems \ref{thm:Nis-inj} and \ref{thm:Nis-surj}.

\section{Injectivity of Nisnevich excision}\label{section:Nis-inj}
In this section we aim to prove \Cref{thm:Nis-inj}. As preparation, we need to perform a shrinking process similar to that in \Cref{section:injectivity}. By \cite[Lemma 9.4]{hty-inv}, there is a Zariski open subscheme $X^\circ\subseteq X$ along with an almost elementary fibration $q\colon X^\circ\to B$ such that $\omega_{B/k}\cong \calO_B$ and $\omega_{X^\circ/k}\cong\calO_{X^\circ}$. By \cite[§9]{hty-inv} we may replace $X$ by $X^\circ$ and $X'$ by $\Pi^{-1}(X^\circ)$. We regard $X'$ as a $B$-scheme via the map $q\circ\Pi$. 

Let $\Delta$ denote the morphism $\Delta\defeq(\id,\can)\colon U\to U\times_B X$, and let $p_X$ and $p_{\A^1\times U}$ denote the projections from $\A^1\times U\times_B X$ onto $X$ respectively $\A^1\times U$.

\begin{prop}[\protect{\cite[Proposition 9.9]{hty-inv}}]\label{prop:fu}
Let $\theta$ be the coordinate of $\A^1$. There exists a function $h_\theta\in k[\A^1\times U\times_B X]$ such that the following properties hold for the functions $h_\theta$, $h_0\defeq h_\theta|_{0\times U\times_B X}$ and $h_1\defeq h_\theta|_{1\times U\times_B X}$:
\begin{enumerate}
\item[$(a)$] The morphism $H_\theta\defeq(p_{\A^1\times U},h_\theta)\colon\A^1\times U\times_B X\to \A^1\times U\times\A^1$ is finite and surjective. Letting $Z_\theta\defeq h_\theta^{-1}(0)\subseteq \A^1\times U\times_B X$, it follows that $Z_\theta$ is finite, surjective and flat over $\A^1\times U$.

\item[$(b)$] Let $Z_0\defeq h_0^{-1}(0)\subseteq U\times_B X$. Then there is the equality of schemes $Z_0=\Delta(U)\amalg G$, where $G\subseteq U\times_B(X\setminus S)$.
\item[$(c)$] The closed subscheme $\calV((\id_U\times\Pi)^*(h_1))\subseteq U\times_B X'$ is a disjoint union of two closed subschemes $Z_1'\amalg Z_2'$. Moreover, the map $(\id_U\times\Pi)|_{Z_1'}$ identifies $Z_1'$ with $Z_1\defeq h^{-1}_1(0)$.
\[\begin{tikzcd}
U\times_B X'\ar[r,"1\times\Pi"] & U\times_B X\ar[r,"h_1"] & \A^1\\
Z_1'\ar[u,hook]\ar[r,"\cong"] & Z_1=\calV(h_1)\ar[u,hook]
\end{tikzcd}\]
\item[$(d)$] We have $Z_\theta\cap(\A^1\times(U\setminus x)\times_B X)\subseteq\A^1\times(U\setminus x)\times_B (X\setminus x)$.
\end{enumerate}
\end{prop}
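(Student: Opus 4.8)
\emph{Proof strategy.} Since the statement is purely scheme-theoretic, the plan is to argue as in \cite[\S8]{hty-inv}, with no Milnor--Witt input. Recall that, after the shrinking carried out above, $q\colon X\to B$ is the restriction of a smooth projective relative curve $\ol p\colon\ol X\to B$ whose boundary $X_\infty\defeq\ol X\setminus X$ is finite étale over $B$; write $\ol X_U\defeq U\times_B\ol X$ for the pullback along $U\to B$, a smooth projective relative curve over $U$ with affine part $U\times_B X$ and boundary $U\times_B X_\infty$ finite étale over $U$. I would build $h_\theta$ as a function on $\A^1\times U\times_B X$ with a pole of order $N\gg0$ and a fixed leading part along the boundary $\A^1\times U\times_B X_\infty$, obtained by linearly interpolating $h_\theta\defeq\theta h_1+(1-\theta)h_0$ between two endpoint functions having the \emph{same} polar behaviour at the boundary, so that the pole order stays equal to $N$ for every $\theta$. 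This pole prescription makes $H_\theta=(p_{\A^1\times U},h_\theta)$ finite and surjective onto $\A^1\times U\times\A^1$, forces $Z_\theta=\{h_\theta=0\}$ to lie in the affine part and to be finite over $\A^1\times U$, and hence forces $Z_\theta$ to be \emph{flat} over $\A^1\times U$, since an effective Cartier divisor in a smooth relative curve that is finite over the base contains no fibre; this is $(a)$.

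For $h_0$ and property $(b)$, I would work with the line bundle $\calO\big(N(U\times_B X_\infty)-\Delta(U)\big)$ on $\ol X_U$. For $N\gg0$ it is globally generated relative to $U$, and its pushforward to $U$ is locally free with formation commuting with base change. Because $k$ --- hence the residue field $k(x)$ at the closed point of $U$, which is finite over $k$ and so infinite --- is infinite, a moving-lemma argument carried out on the closed-fibre curve $\ol X_{k(x)}$ and lifted using that $U$ is local yields a global section $s_0$ that is nonvanishing along the boundary and whose zero divisor $G$ on $\ol X_U$ is disjoint from $\Delta(U)$ and avoids the finitely many points of $S\times_B U$ lying in the closed fibre; since $G$ is finite over $U$, disjointness over the closed point spreads out over all of $U$, so $G$ is disjoint from $S\times_B U$ altogether. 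Taking $h_0\defeq s_0\cdot s_{\Delta(U)}$ with $s_{\Delta(U)}$ the tautological section of $\calO(\Delta(U))$ then gives $\{h_0=0\}=\Delta(U)\amalg G$ with $G\subseteq U\times_B(X\setminus S)$, which is $(b)$.

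For $h_1$ and properties $(c)$, $(d)$, I would use the Chinese remainder theorem on $\ol X_U$ to pin down $h_1$ by three prescriptions: the same pole order and leading part as $h_0$ along $U\times_B X_\infty$; agreement with $h_0$ along the vertical slice $U\times_B\{x\}$ over the point $x\in X$ (note that by $(b)$ the restriction $h_0|_{U\times_B\{x\}}$ has zero set exactly $\{(x,x)\}$, since $G$ misses this slice because $x\in S$); and, on the closed fibre of $\ol X_U\to U$, vanishing only at $(x,x)$. Then $Z_1=\{h_1=0\}$ is finite over $U$ and connected, i.e.\ a local scheme with closed point $(x,x)$. Property $(d)$ is then immediate: $h_\theta$ restricted to $U\times_B\{x\}$ equals $h_0$ there for every $\theta$, hence vanishes only at $(x,x)$, which does not lie over $U\setminus x$. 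For $(c)$: since the square is an elementary distinguished Nisnevich square, $\id_U\times\Pi$ is étale and $\Pi^{-1}(x)=\{x'\}$ with $k(x')=k(x)$; so the finite étale cover $(\id_U\times\Pi)^{-1}(Z_1)\to Z_1$ of the connected local scheme $Z_1$ has exactly one, rational, point in the fibre over the closed point, and therefore splits off a clopen component $Z_1'$ mapping isomorphically onto $Z_1$, the complement being $Z_2'$.

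I expect the main obstacle to be the simultaneous realisation of $(a)$--$(d)$ by a single function: the ``monic at infinity'' requirement of $(a)$, the genericity in $(b)$ needed to sweep the residual zeros $G$ off the horizontal locus $S\times_B U$, and the rigid Chinese-remainder conditions along the closed fibre and the slice $U\times_B\{x\}$ demanded by $(c)$ and $(d)$ pull in different directions; the delicate point is that, after imposing the Chinese-remainder conditions, the linear system in which $s_0$ is allowed to vary must still be base-point-free relative to $U$, which is where $N\gg0$ and the infinitude of $k$ re-enter. Two secondary points requiring care: the flatness claims over the non-Dedekind (but regular) base $\A^1\times U$, which one reduces to ``finite over the base $\Rightarrow$ contains no fibre $\Rightarrow$ flat'' for Cartier divisors in a smooth relative curve; and, in $(c)$, checking that $(\id_U\times\Pi)^{-1}(Z_1)\to Z_1$ is genuinely finite, so that its splitting is a decomposition of schemes and not merely of underlying spaces.
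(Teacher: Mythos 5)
This proposition is not proved in the paper at all: it is imported verbatim from \cite[Proposition 8.9]{hty-inv}, so there is no internal argument to compare yours against. Your reconstruction of the Garkusha--Panin construction follows the right general strategy --- compactify the almost elementary fibration to a relative projective curve with finite \'etale boundary, interpolate $h_\theta=\theta h_1+(1-\theta)h_0$ between endpoints with matching polar parts so that $H_\theta$ stays finite for all $\theta$, build $h_0$ from a section of $\calO\bigl(NX_\infty-\Delta(U)\bigr)$ times the tautological section of $\calO(\Delta(U))$, and pin down $h_1$ by Chinese-remainder congruences --- and your treatments of $(a)$, $(b)$ and $(d)$ are essentially sound. (For flatness in $(a)$, the clean argument over the higher-dimensional base $\A^1\times U$ is miracle flatness: $Z_\theta$ is a hypersurface in a smooth relative curve, hence Cohen--Macaulay, and it is finite over the regular base with zero-dimensional fibres; the ``contains no fibre'' criterion you invoke is really a statement about one-dimensional bases.)

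The genuine gap is in $(c)$. You deduce the splitting $(\id_U\times\Pi)^{-1}(Z_1)=Z_1'\amalg Z_2'$ from the principle that a connected (quasi-)finite \'etale cover of a local scheme splits off a copy of the base as soon as it has a rational point in the closed fibre. That principle is false for non-henselian local schemes: $\Spec\bigl(\C[t]_{(t)}[y]/(y^2-1-t)\bigr)\to\Spec\C[t]_{(t)}$ is a connected \'etale double cover with two rational points over the closed point and no splitting, and deleting the point $(t,y)=(0,-1)$ from the source realises this configuration inside an elementary distinguished square with $S=\{t=0\}$: the preimage of the local base is then connected, has a unique closed point which is rational over the closed point of the base, and is still generically of degree $2$, so it does not split. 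Since $U=\Spec\calO_{X,x}$ is Zariski-local rather than henselian, and since the fibres of $\Pi$ over points of $X\setminus S$ genuinely have degree $>1$ in a nontrivial Nisnevich square, the conditions you impose on $h_1$ (behaviour on the closed fibre and on the slice $U\times_B\{x\}$) do not rule out exactly this phenomenon along the non-closed points of $Z_1$. This is the delicate point of the proposition: the congruences defining $h_1$ in \cite[\S 8]{hty-inv} must control $Z_1\cap(U\times_B S)$ scheme-theoretically (not merely the closed point of $Z_1$), and the splitting is then extracted from a dedicated lemma that uses the full strength of the hypothesis that $\Pi^{-1}(S)\to S$ is an isomorphism; an extra idea of this kind is missing from your argument.
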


\begin{corollary}[\protect{\cite[Remark 9.10]{hty-inv}}]\label{cor:subsets}
We have the following inclusions:
\begin{enumerate}
\item[$(1)$] $Z_\theta\cap(\A^1\times(U\setminus S)\times_B X)\subseteq \A^1\times(U\setminus S)\times_B X\setminus S$.
\item[$(2)$] $Z_0\cap((U\setminus S)\times_B X)\subseteq(U\setminus S)\times_B(X\setminus S)$.
\item[$(3)$] $Z_1\cap((U\setminus S)\times_B X)\subseteq(U\setminus S)\times_B(X\setminus S)$.
\item[$(4)$] $Z_1'\cap((U\setminus S)\times_B X')\subseteq(U\setminus S)\times_B(X'\setminus S')$.
\end{enumerate}
\end{corollary}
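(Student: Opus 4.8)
The plan is to derive all four inclusions from \Cref{prop:fu}, with $(1)$ the substantive one and $(2)$--$(4)$ following from it by short formal arguments. Throughout, recall that ``$U\setminus S$'' means $U\setminus\can^{-1}(S)$, where $\can^{-1}(S)=S\times_XU$.

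First I would prove $(2)$ directly from part $(b)$. Writing $Z_0=\Delta(U)\amalg G$ with $G\subseteq U\times_B(X\setminus S)$, a point of $Z_0$ whose $U$-coordinate lies in $U\setminus S$ either lies in $G$ --- so that its $X$-coordinate already avoids $S$ --- or lies on the graph $\Delta(U)$, so that its $X$-coordinate is $\can$ of its $U$-coordinate and hence avoids $S$ by assumption. The same bookkeeping records, for later use, the identity $Z_0\cap(U\times_BS)=\Delta(U)\cap(U\times_BS)=\Delta(\can^{-1}(S))$, whose $U$-coordinate locus is exactly $\can^{-1}(S)$.

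The heart of the matter is $(1)$, and this is where I expect the real work to be. The assertion is that the closed subscheme $W\defeq Z_\theta\cap(\A^1\times U\times_BS)$ of $Z_\theta$ has image in $\A^1\times U$ contained in $\A^1\times\can^{-1}(S)$. By the finiteness and flatness of $Z_\theta$ over $\A^1\times U$ in part $(a)$ that image is closed, and by the previous paragraph its fibre over $\theta=0$ is exactly $\{0\}\times\can^{-1}(S)$. Now property $(d)$ is precisely this statement with the single closed point $x$ in place of $S$; since $\can^{-1}(S)$ is in general strictly larger than $\{x\}$, statement $(1)$ is genuinely stronger, and I would obtain it by rerunning the argument that proves \Cref{prop:fu}$(d)$ --- tracking on which part of the fibre product lying over $S$ the function $h_\theta$ is permitted to vanish --- rather than formally from $(a)$--$(d)$. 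A partial formal argument does exist: since the two algebra structures on $(p_{\A^1\times U})_*\calO_{Z_\theta}$, one over $\calO_{\A^1\times U}$ and one over $\calO_X$, are compatible over $\calO_B$, the ideal of the image of $S$ in $B$ annihilates the quotient defining $W$, forcing the $U$-coordinate of $W$ to lie over that image. This is weaker than $(1)$, and closing the gap --- replacing ``over the image of $S$ in $B$'' by ``in $\can^{-1}(S)$'' --- is exactly the delicate point that the construction of $h_\theta$ is designed to handle.

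Granting $(1)$, the rest is immediate. Part $(3)$ is the restriction of $(1)$ to $\theta=1$, since $h_1=h_\theta|_{\{1\}\times U\times_BX}$ gives $Z_1=Z_\theta\cap(\{1\}\times U\times_BX)$. For $(4)$, the isomorphism $(\id_U\times\Pi)\colon Z_1'\xrightarrow{\cong}Z_1$ of part $(c)$ leaves the $U$-coordinate unchanged, so a point of $Z_1'$ lying over $U\setminus S$ is carried to a point of $Z_1$ lying over $U\setminus S$, whose $X$-coordinate lies in $X\setminus S$ by $(3)$; hence the $X'$-coordinate of the original point lies in $\Pi^{-1}(X\setminus S)=X'\setminus S'$, the last equality because $V'=\Pi^{-1}(V)$ forces $\Pi^{-1}(S)$ and $S'$ to have the same underlying set. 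This gives $(4)$ and completes the argument.
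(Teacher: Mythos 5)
Your reductions are sound as far as they go: $(2)$ follows from \Cref{prop:fu}$(b)$ exactly as you say, $(3)$ is the fiber of $(1)$ over $\theta=1$, and $(4)$ follows from $(3)$ via the isomorphism $Z_1'\cong Z_1$ of \Cref{prop:fu}$(c)$ together with the set-theoretic identity $\Pi^{-1}(S)=S'$ supplied by the elementary distinguished square. You have also correctly diagnosed the crux: $(1)$ is \emph{not} a formal consequence of \Cref{prop:fu}$(a)$--$(d)$. Nothing in those four properties rules out a point $(t,u,y)\in Z_\theta$ with $t\neq 0$, $\can(u)\notin S$ and $y\in S\setminus\{x\}$: the fiber product over $B$ only imposes $q(y)=q(\can(u))$, and finiteness of $q|_S$ does not upgrade ``$q(\can(u))\in q(S)$'' to ``$\can(u)\in S$'' (this is exactly where your partial formal argument stops short). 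Property $(d)$ only controls the locus over $x\in X$, and property $(b)$ only controls the fiber $\theta=0$, where your identity $Z_0\cap(U\times_B S)=\Delta(\can^{-1}(S))$ is correct but does not propagate to other values of $\theta$ by closedness of the image alone.

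The genuine gap is therefore that your proposal does not prove $(1)$ (and hence neither $(3)$ nor $(4)$): it defers to ``rerunning the argument that proves $(d)$,'' but that argument --- the construction of $h_\theta$ --- is not reproduced in this paper; \Cref{prop:fu} is imported wholesale from \cite[Proposition 8.9]{hty-inv}. The paper resolves this by not proving the corollary at all: the entire statement is cited to \cite[Remark 8.10]{hty-inv}, where the inclusions are read off from the construction of $h_\theta$ itself, which controls $h_\theta$ on all of $\A^1\times U\times_B S$ and not merely on $\A^1\times U\times_B x$. To make your argument self-contained you would need to record that extra feature of the construction as an additional clause of \Cref{prop:fu}; as written, only item $(2)$ is genuinely established from the stated hypotheses.
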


\begin{definition}\label{def:homotopy}
Choose a trivialization $\chi$ of $\omega_{H_\theta\times1}$. We define $\scrH_\theta^\chi\in\wt\Cor_k(\A^1\times U,X)$ as the image of the projection $p_X\in\wt\CH^{d_X}_{\Gamma_{p_X}}(\A^1\times U\times_B X\times X,\omega_X)$ under the composition
\begin{align*}
&\wt\CH^{d_X}_{\Gamma_{p_X}}(\A^1\times U\times_BX\times X,\omega_X)\\
&\xrightarrow{\cong} \wt\CH_{\Gamma_{p_X}}^{d_X}(\A^1\times U\times_B X\times X,\omega_{H_\theta\times1}\otimes\omega_X)\\
&\xrightarrow{(H_\theta\times1)_*} \wt\CH^{d_X}_{(H_\theta\times1)(\Gamma_{p_X})}(\A^1\times U\times \A^1\times X,\omega_X)\\
&\xrightarrow{(1\times i_0\times1)^*} \wt\CH^{d_X}_{T}(\A^1\times U\times X,\omega_X),
\end{align*}
 where $d_X\defeq\dim X$, $T\defeq(1\times i_0\times 1)^{-1}((H_\theta\times1)(\Gamma_{p_X}))$, and where the first isomorphism is induced by $\chi$.
\end{definition}

\begin{lemma}\label{lemma:supp}
The finite $\rmMW$-correspondence $\scrH^\chi_\theta$ is supported on $Z_\theta$. Moreover, for $\epsilon=0,1$ we have $\supp\scrH^\chi_\epsilon=Z_\epsilon$ (where $\scrH^\chi_\epsilon\defeq\scrH^\chi_\theta\circ i_\epsilon$).
\end{lemma}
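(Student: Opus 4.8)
The plan is to unwind the definition of $\scrH_\theta$ in \Cref{def:homotopy} and track supports through each arrow, exactly as in the proof of \Cref{lemma:support} in \Cref{section:injectivity}. First I would observe that the projection $p_X\in\wt\Cor_k(\A^1\times U\times_B X,X)$ is supported on its graph $\Gamma_{p_X}$, which is isomorphic to $\A^1\times U\times_B X$. The pushforward $(H_\theta\times1)_*$ takes a cycle supported on $\Gamma_{p_X}$ to one supported on $(H_\theta\times1)(\Gamma_{p_X})$, and by part $(a)$ of \Cref{prop:fu} the morphism $H_\theta=(p_{\A^1\times U},h_\theta)$ is finite, hence $(H_\theta\times1)|_{\Gamma_{p_X}}$ is a closed embedding onto its image (it is injective since $H_\theta$ is injective on each fiber over $\A^1\times U$, the last coordinate recording the point of $X$). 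So after this step the support is the image of $\A^1\times U\times_B X$ in $\A^1\times U\times\A^1\times X$ under $(x,u,\theta)\mapsto(\theta,u,h_\theta(x),x)$.

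Next I would handle the restriction $(1\times i_0\times1)^*$. By \cite[Example 4.17]{Calmes-Fasel}, restricting along $1\times i_0\times1$ corresponds to pulling back along the inclusion of the fiber over $0$ in the third coordinate; the support of the restricted correspondence is the preimage of $(H_\theta\times1)(\Gamma_{p_X})$ under $1\times i_0\times1$, i.e.\ the locus of points with $h_\theta(x)=0$, which is precisely $Z_\theta=h_\theta^{-1}(0)$ by definition. Hence $\supp\scrH_\theta=Z_\theta$. For the endpoint statement, the same computation as in \Cref{lemma:support} applies: $\scrH_\epsilon=\scrH_\theta\circ i_\epsilon=(i_\epsilon\times1)^*\scrH_\theta$ by \cite[Example 4.17]{Calmes-Fasel}, and its support is $(i_\epsilon\times1)^{-1}(Z_\theta)$, cut out by the extra equation $\theta=\epsilon$; this is $h_\epsilon^{-1}(0)=Z_\epsilon$ for $\epsilon=0,1$, using the notation $h_0,h_1$ of \Cref{prop:fu}.

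I expect the main subtlety — though it is minor — to be verifying that $(H_\theta\times1)$ really does identify $\Gamma_{p_X}$ with its scheme-theoretic image, so that the pushforward does not collapse or thicken the support; this is where finiteness of $H_\theta$ from \Cref{prop:fu}$(a)$ is essential, and one argues injectivity fiberwise exactly as in the first paragraph of the proof of \Cref{lemma:support}. Everything else is bookkeeping with the support-functoriality of pullback and pushforward in the Rost--Schmid complex, already recorded in \cite[Lemmas 4.8, 4.10, Example 4.17]{Calmes-Fasel} and used repeatedly above. I would keep the write-up short, referencing \Cref{lemma:support} for the repeated argument rather than reproducing it.
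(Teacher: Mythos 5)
Your proposal is correct and follows essentially the same route as the paper: unwind \Cref{def:homotopy}, note that $H_\theta\times1$ injects $\Gamma_{p_X}$ onto its image by the fiberwise argument of \Cref{lemma:support}, identify the support after $(1\times i_0\times1)^*$ with the locus $h_\theta=0$, i.e.\ $Z_\theta$, and obtain $\supp\scrH_\epsilon=Z_\epsilon$ by substituting $\theta=\epsilon$. The paper's proof is exactly this, deferring the injectivity check to \Cref{lemma:support} just as you do.
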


\begin{proof}
Let $T$ denote the support of $\scrH^\chi_\theta$. As indicated in \Cref{def:homotopy} we have $T=(1\times i_0\times 1)^{-1}((H_\theta\times1)(\Gamma_{p_X}))$. By the same argument as in \Cref{lemma:support}, $H_\theta\times1$ injects $\Gamma_{p_X}$ onto its image, hence $(H_\theta\times1)(\Gamma_{p_X})\cong\A^1\times U\times_B X$. Thus $T$ consists of those points $(t,u,x)\in\A^1\times U\times_B X$ such that $h_\theta(t,u,x)=0$, i.e., $T=Z_\theta$.

Turning to the support of $\scrH^\chi_\epsilon$, note that $\scrH^\chi_\epsilon$ is the image of $p_X$ under the composition
\[
\wt\CH^{d_X}_{\Gamma_{p_X}}(\A^1\times U\times_B X\times X,\omega_X)\xrightarrow{(i_\epsilon\times1)^*\circ(1\times i_0\times1)^*\circ(H_\theta\times1)_*}\wt\CH^{d_X}_{\supp\scrH^\chi_\epsilon}(\epsilon\times U\times X,\omega_X).
\]
By the same reasoning as above, pulling back along $i_\epsilon\times1$ amounts to substituting $\theta=\epsilon$ in $h_\theta$, which yields the desired result.
\end{proof}

\begin{lemma}\label{lemma:hty-0-1}
There are finite $\rmMW$-correspondences $\Theta\in\wt\Cor_k(U,X\setminus S)$ and $\Phi\in\wt\Cor_k(U,X')$ along with an invertible regular function $\lambda$ on $U$ such that $\scrH^\chi_0=\can\circ\ip{\lambda}+j_X\circ\Theta$ and $\scrH^\chi_1=\Pi\circ\Phi$.
\end{lemma}

\begin{proof}
  By \Cref{prop:fu} $(b)$, we can write $\scrH^\chi_0=\alpha+\Theta'$, where $\Theta'\in\wt\Cor_k(U,X)$ is supported on $G$ and $\alpha\in\wt\Cor_k(U,X)$ is supported on $\Delta(U)$. Using \Cref{prop:fu} $(b)$, \Cref{lemma:iso} ensures that there is a unique finite $\rmMW$-correspondence $\Theta\in\wt\Cor_k(U,X\setminus S)$ such that $\Theta'=j_X\circ\Theta$. We proceed similarly for $\scrH^\chi_1$: by \Cref{prop:fu} $(c)$, the pullback 
\[
(1\times\Pi)^*(\scrH^\chi_1)\in\wt\CH^{d_X}_{(1\times\Pi)^{-1}(Z_1)}(U\times X',\omega_{X'})
\]
is supported on $Z_1'\amalg Z_2'$, and $(1\times\Pi)|_{Z_1'}$ is an isomorphism from $Z_1'$ onto $Z_1$. It follows that we have an isomorphism
\[
(1\times\Pi)_*\colon\wt\CH_{Z_1'}^{d_X}(U\times X',\omega_{X'})\xrightarrow{\cong}\wt\CH_{Z_1}^{d_X}(U\times X,\omega_X).
\]
Hence $\Phi\defeq(1\times\Pi)_*^{-1}(\scrH^\chi_1)=(1\times\Pi)^*(\scrH^\chi_1)\in\wt\Cor_k(U,X')$ satisfies $\Pi\circ\Phi=\scrH^\chi_1$.

It remains to show that $\alpha=\can\circ\ip{\lambda}$, the proof of which being similar as in the proof of \Cref{lemma:finding-Phi}. As 
\[
(1\times i_0\times1)\circ(i_0\times1)=(i_0\times1\times i_0\times1)\colon U\times X\to\A^1\times U\times\A^1\times X,
\]
we can write $\scrH^\chi_0=(i_0\times1\times i_0\times1)^*(H_\theta\times1)_*(\Gamma_{p_X})_*(\ip1)$. Using the base change formula twice as in \Cref{lemma:finding-Phi}, we find that 
\[
\alpha=(H_\theta|_{\Delta(U)}\times1)_*(i_{\Delta(U)}\times1)^*(\Gamma_{p_X})_*(\ip1)=(\Gamma_\can)_*(\ip1)\circ\ip{\lambda}=\wt\gamma_\can\circ\ip{\lambda},
\]
where $\lambda\in k[U]^\times$ is the fraction of two trivializations of $\omega_U$, and $i_{\Delta(U)}\colon\Delta(U)\hookrightarrow U\times_B X$ is the inclusion.
\end{proof}

\begin{lemma}\label{lemma:bunch-of-pairs}
Let $j_U\colon U\setminus S \hookrightarrow U$, $j_X\colon X\setminus S\hookrightarrow X$ and $j_{X'}\colon X'\setminus S'\hookrightarrow X'$ denote the inclusions, and set: 
\begin{align*}
\scrH^{\chi,S}_\theta&\defeq(1\times j_U\times j_X)^*(\scrH^\chi_\theta)\in\wt\Cor_k(\A^1\times(U\setminus S),X\setminus S).\\
\Phi^S&\defeq(j_U\times j_{X'})^*(\Phi)\in\wt\Cor_k(U\setminus S,X'\setminus S').\\
\Theta^S&\defeq(j_U\times1)^*(\Theta)\in\wt\Cor_k(U\setminus S,X\setminus S).
\end{align*}
Then we have:
\begin{align*}
(\scrH^\chi_\theta,\scrH_\theta^{\chi,S})&\in\wt\Cor_k^\pair(\A^1\times(U,U\setminus S),(X,X\setminus S)).\\
(\Phi,\Phi^S)&\in\wt\Cor_k^\pair((U,U\setminus S),(X',X'\setminus S')).\\
(\Theta,\Theta^S)&\in\wt\Cor_k^\pair((U,U\setminus S),(X\setminus S,X\setminus S)).
\end{align*}
\end{lemma}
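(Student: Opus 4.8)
The plan is to deduce all three statements from \Cref{lemma:making-pairs}. That lemma reduces the verification that a pair $(\alpha,\alpha')$ with $\alpha'=(j_X\times j_Y)^*(\alpha)$ is a morphism in $\wt\Cor_k^\pair$ to the single set-theoretic condition that the support of $\alpha$, intersected with the source-times-target open, lands inside the source-open-times-target-open. So for each of the three pairs I would first pin down the support of the underlying $MW$-correspondence — using \Cref{lemma:supp} and the construction in the proof of \Cref{lemma:hty-0-1} — and then read off the required inclusion directly from \Cref{cor:subsets}.

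Concretely, for $(\scrH_\theta,\scrH_\theta^S)$ I would apply \Cref{lemma:making-pairs} with $\alpha=\scrH_\theta\in\wt\Cor_k(\A^1\times U,X)$, source pair $(\A^1\times U,\A^1\times(U\setminus S))$ and target pair $(X,X\setminus S)$; the relevant open immersions are $\id_{\A^1}\times j_U$ and $j_X$, so that the correspondence $(1\times j_U\times j_X)^*(\scrH_\theta)$ appearing in \Cref{lemma:making-pairs} is exactly $\scrH_\theta^S$ as defined above. By \Cref{lemma:supp} the support is $Z_\theta$, which sits inside $\A^1\times U\times_B X$ by \Cref{prop:fu}$(a)$; hence intersecting $\supp\scrH_\theta$ with $\A^1\times(U\setminus S)\times X$ agrees with intersecting $Z_\theta$ with $\A^1\times(U\setminus S)\times_B X$, and \Cref{cor:subsets}$(1)$ places this inside $\A^1\times(U\setminus S)\times_B(X\setminus S)\subseteq\A^1\times(U\setminus S)\times(X\setminus S)$, which is precisely the hypothesis needed. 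The pair $(\Phi,\Phi^S)$ is handled the same way: the proof of \Cref{lemma:hty-0-1} exhibits $\Phi$ as a class in $\wt\CH^{d_X}_{Z_1'}(U\times X',\omega_{X'})$, so $\supp\Phi\subseteq Z_1'\subseteq U\times_B X'$ by \Cref{prop:fu}$(c)$, and \Cref{cor:subsets}$(4)$ supplies $\supp\Phi\cap((U\setminus S)\times X')\subseteq(U\setminus S)\times(X'\setminus S')$, the condition required to invoke \Cref{lemma:making-pairs} with target $(X',X'\setminus S')$. Finally, for $(\Theta,\Theta^S)$ the target pair is $(X\setminus S,X\setminus S)$, whose two open immersions coincide with the identity; the support hypothesis of \Cref{lemma:making-pairs} is then automatic, and the assertion reduces to the part of \Cref{lemma:making-pairs} that guarantees $\Theta\in\wt\Cor_k(U,X\setminus S)$ restricts to a genuine $MW$-correspondence $\Theta^S=(j_U\times1)^*(\Theta)$ from $U\setminus S$ to $X\setminus S$.

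The geometric substance here has already been delivered by \Cref{prop:fu}, \Cref{cor:subsets}, \Cref{lemma:supp} and \Cref{lemma:hty-0-1}, so what remains is essentially bookkeeping. The one observation that genuinely has to be made explicit is that all the relevant supports are contained in the fibre products over $B$, so that intersecting them with a plain product such as $\A^1\times(U\setminus S)\times X$ coincides with intersecting them with the fibre product $\A^1\times(U\setminus S)\times_B X$; without this remark the inclusions recorded in \Cref{cor:subsets} — which are phrased in terms of fibre products — would not directly apply. Beyond this I anticipate no real obstacle: the only care needed is to match each support with the correct clause of \Cref{cor:subsets} and to keep straight which open immersions enter the definitions of $\scrH_\theta^S$, $\Phi^S$ and $\Theta^S$.
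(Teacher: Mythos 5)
Your argument is exactly the paper's: the proof there consists of the single sentence that the claim follows from \Cref{lemma:making-pairs} in light of \Cref{cor:subsets}, and your proposal is a correct, fully spelled-out version of that reduction, matching each support ($Z_\theta$, $Z_1'$, $G$) with the right clause of \Cref{cor:subsets}. The remark about the supports living in the fibre products over $B$ is a fair piece of bookkeeping that the paper leaves implicit.
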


\begin{proof}
In light of \Cref{cor:subsets}, this follows from \Cref{lemma:making-pairs}.
\end{proof}

\begin{proof}[Proof of \Cref{thm:Nis-inj}]
Replacing $\scrH_\theta^\chi$, $\Theta$ and $\Phi$ by the respective precompositions with $\ip{\lambda^{-1}}$, it follows from Lemmas \ref{lemma:hty-0-1} and \ref{lemma:bunch-of-pairs} that we have the identity
\[
\ol\Pi\circ\ol\Phi-\ol j_X\circ\ol{\Theta}=\ol\can
\]
in $\hwtCor_k^\pair$.
\end{proof}

\section{Surjectivity of Nisnevich excision}\label{section:Nis-surj}
We proceed to prove \Cref{thm:Nis-surj}. In this section, the closed subscheme $S\subseteq X$ is assumed to be smooth over $k$. Performing a similar shrinking process as in \Cref{section:Nis-inj}, we may assume that there is an almost elementary fibration $q\colon X\to B$ such that $\omega_{B/k}\cong\calO_B$ and $\omega_{X/k}\cong\calO_X$. Since $\Pi$ is étale, it follows that $\omega_{X'/k}\cong\calO_{X'}$.

Let $\Delta'\defeq(\id,\can')\colon U'\to U'\times_B X'$, and let $p_{X'}$ and $p_{\A^1\times U'}$ denote the projections from $\A^1\times U'\times_B X'$ to $X'$ respectively $\A^1\times U'$. First we recall the following fact from \cite{hty-inv}:

\begin{prop}[\protect{\cite[Proposition 11.6]{hty-inv}}]\label{prop:fu2}
Let $\A^1$ have coordinate $\theta$. There exist functions $F\in k[U\times X']$ and $h_\theta'\in k[\A^1\times U'\times_B X']$ such that the following properties hold for the functions $F$, $h_\theta'$, $h'_0\defeq h_\theta'|_{0\times U'\times_B X'}$, and $h_1'\defeq h_\theta'|_{1\times U'\times_B X'}$:
\begin{itemize}
\item[$(a)$] The morphism $H_\theta'\defeq(p_{\A^1\times U'},h_\theta')\colon\A^1\times U'\times_B X'\to \A^1\times U'\times\A^1$ is finite and surjective. Letting $Z'_\theta\defeq (h_\theta')^{-1}(0)\subseteq\A^1\times U'\times_B X'$, it follows that $Z_\theta'$ is finite, surjective and flat over $\A^1\times U'$.
\item[$(b)$] Let $Z_0'\defeq (h_0')^{-1}(0)$. Then there is the equality of schemes $Z_0'=\Delta'(U')\amalg G'$, where $G'\subseteq U'\times_B(X'\setminus S')$.
\item[$(c)$] $h_1'=(\pi\times\id_{X'})^*(F)$. We write $Z_1'\defeq(h_1')^{-1}(0)$.
\item[$(d)$] $Z_\theta'\cap(\A^1\times(U'\setminus S')\times_B X')\subseteq\A^1\times(U'\setminus S')\times_B(X'\setminus S')$.
\item[$(e)$] The morphism $(\pr_U,F)\colon U\times X'\to U\times\A^1$ is finite and surjective. Letting $Z_1\defeq F^{-1}(0)$, it follows that $Z_1$ is finite and surjective over $U$.
\item[$(f)$] $Z_1\cap((U\setminus S)\times X')\subseteq(U\setminus S)\times(X'\setminus S')$.
\end{itemize}
\end{prop}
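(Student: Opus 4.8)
The statement is \cite[Proposition 10.5]{hty-inv}, and its proof is a geometric construction on the relative curve underlying the almost elementary fibration; I outline the plan below. After the shrinking performed at the start of this section I have an almost elementary fibration $q\colon X\to B$ with $\omega_{X/k}$ and $\omega_{B/k}$ trivial, and $\Pi$ is étale so that $\omega_{X'/k}$ is trivial as well. The first step would be to pass to the smooth projective curve fibration $\ol{X'}\to B$ compactifying $q\circ\Pi\colon X'\to B$, with boundary $X'_\infty\defeq\ol{X'}\setminus X'$ finite étale over $B$, noting that after the shrinking $S'$ is finite over $B$ and disjoint from $X'_\infty$. Over the local schemes $U$ and $U'$ all of the relevant twisted line bundles on $\ol{X'}\times_B U$ and $\ol{X'}\times_B U'$ are trivial. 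The overall idea is to build $F$ so that its zero scheme is a copy of the graph of $\can'$ ``moved away'' from $S'$ over $U\setminus S$, then to join $(\pi\times\id_{X'})^*F$ to an honest local equation of the graph $\Delta'(U')$ by a linear pencil in the parameter $\theta$, and finally to read off $(a)$--$(f)$ from the construction.

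\emph{Construction of $F$.} On the affine relative curve $X'_U\defeq X'\times_B U$ over the local scheme $U$, with boundary $X'_{\infty,U}\defeq X'_\infty\times_B U$ finite étale over $U$, I would invoke a relative Riemann--Roch argument (equivalently, relative Serre vanishing combined with a Chinese remainder theorem along the finite subscheme $S'\times_B U$) to produce, for $N\gg0$, a function $F\in\calO(X'_U)$ with two features: it extends to a rational section of $\calO_{\ol{X'}\times_B U}(N\cdot X'_{\infty,U})$ whose polar divisor is exactly $N\cdot X'_{\infty,U}$, which is what forces $(\pr_U,F)\colon X'_U\to U\times\A^1$ to be finite and surjective (property $(e)$); and it restricts to the constant $1$ at the finitely many points of $S'\times_B U$ lying over $U\setminus S$, which makes $Z_1\defeq F^{-1}(0)$ avoid $S'$ there (property $(f)$). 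Setting $h_1'\defeq(\pi\times\id_{X'})^*F$ gives property $(c)$, and since $\pi\colon U'\to U$ is a morphism of local schemes, base change along it preserves finiteness, surjectivity and the non-vanishing along $S'$.

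\emph{The homotopy.} The graph $\Delta'(U')\subseteq X'\times_B U'$ of $\can'$ is a section, hence a regular codimension-one closed subscheme, and by triviality of the relevant bundle over the local base $U'$ it is cut out by a single function $g_0$; I would normalize $g_0$ to be monic of degree $N$ along the curve fibers, and then correct it by a unit (again via relative Riemann--Roch) so as to make it a unit along $S'\times_B U'$ over $U'\setminus S'$. Calling the result $h_0'$, this yields $(h_0')^{-1}(0)=\Delta'(U')\amalg G'$ with $G'\subseteq U'\times_B(X'\setminus S')$, i.e.\ property $(b)$. Finally I would form the pencil
\[
h_\theta'\defeq(1-\theta)\,h_0'+\theta\,h_1'\in k[\A^1\times U'\times_B X'].
\]
Since $h_0'$ and $h_1'$ share the polar behaviour $N\cdot X'_{\infty,U'}$ they are both monic of degree $N$ along the curve fibers, hence so is $h_\theta'$ for every $\theta$; therefore $H_\theta'\defeq(p_{\A^1\times U'},h_\theta')$ is finite and surjective, and $Z_\theta'\defeq(h_\theta')^{-1}(0)$ is a family of degree-$N$ relative divisors over $\A^1\times U'$, hence finite, surjective and flat over it (property $(a)$). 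Because $h_0'$ and $h_1'$ are units along $S'\times_B U'$ over $U'\setminus S'$, so is every $h_\theta'$, which gives property $(d)$.

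The main obstacle is the construction of $F$: one must secure the finiteness-over-$U$ condition $(e)$ and the avoidance condition $(f)$ simultaneously while keeping the polar divisor at infinity equal to $N\cdot X'_{\infty,U}$, so that monicity --- and hence the finiteness and flatness of the whole pencil in $(a)$ --- is preserved. This is precisely where the relative Riemann--Roch / moving-lemma input is needed, and where the smoothness of $S$ enters, through the clean structure of $S'\cong S$ along which the interpolation conditions are imposed.
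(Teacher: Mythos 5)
The paper does not actually prove this proposition; it is imported verbatim from the cited source, and your outline follows that source's strategy faithfully: compactify the relative curve $X'\to B$, use ampleness of the boundary divisor $X'_\infty$ (Serre vanishing plus Chinese-remainder interpolation along $S'$ and the graph of $\can'$) to manufacture $F$ and $h_0'$ as ``monic'' sections of $\calO(N\cdot X'_\infty)$, and join $h_0'$ to $h_1'=(\pi\times\id_{X'})^*F$ by the linear pencil $h_\theta'$. The deduction of $(a)$, $(c)$ and $(e)$ from monicity is the standard argument and is fine. (Minor point: $F$ must live on $U\times_B X'$, the relative curve over $U$, for $(\pr_U,F)$ to have any chance of being finite; you silently work with $X'\times_B U$, which is the correct reading.)

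However, two of your deductions fail as written. For $(d)$ you argue that since $h_0'$ and $h_1'$ are units along $S'$ over $U'\setminus S'$, so is every $h_\theta'$; this is false, since a convex combination of units need not be a unit (take $h_0'=-1$, $h_1'=1$, $\theta=\tfrac12$). What must be arranged --- and what the analogous steps in Sections \ref{section:Zar-surj} and \ref{section:Nis-inj} of this paper actually arrange, compare properties $(1_E)$--$(3_E)$ --- is that $h_0'$ and $h_1'$ restrict to the \emph{same} invertible function, say the constant $1$, on $(U'\setminus S')\times_B S'$, so that $h_\theta'$ restricts to that same constant for every $\theta$. Your normalization of $F$ gives this for $h_1'$, but you only require $h_0'$ to be \emph{a} unit there; you must impose $h_0'\equiv 1$ as well (which is consistent, since over $U'\setminus S'$ the graph $\Delta'(U')$ does not meet $U'\times_B S'$). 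Secondly, property $(b)$ requires $G'\subseteq U'\times_B(X'\setminus S')$, i.e.\ the residual divisor must avoid $S'$ over \emph{all} of $U'$, not merely over $U'\setminus S'$; this forces a further interpolation condition on $h_0'$ along the whole of $U'\times_B S'$ (that it agree there, up to a unit, with a local equation of the diagonal, so that the only zeros on $U'\times_B S'$ are diagonal ones), which your outline does not impose. Both repairs are of the same Riemann--Roch/CRT type you already invoke, but without them precisely the properties $(b)$ and $(d)$ that Section \ref{section:Nis-surj} needs --- to split off $\Xi$ with image in $X'\setminus S'$ and to make $(\scrH_\theta',(\scrH_\theta')^{S'})$ a morphism of pairs --- do not follow from your construction.
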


\begin{corollary}[\protect{\cite[Remark 11.7]{hty-inv}}]\label{cor:subsets2}
We have the following inclusions:
\begin{enumerate}
\item[$(1)$] $Z_\theta'\cap(\A^1\times(U'\setminus S')\times_B X')\subseteq\A^1\times(U'\setminus S')\times_B(X'\setminus S')$.
\item[$(2)$] $Z_0'\cap((U'\setminus S')\times_B X')\subseteq(U'\setminus S')\times_B(X'\setminus S')$.
\item[$(3)$] $Z_1'\cap((U'\setminus S')\times_B X')\subseteq(U'\setminus S')\times_B(X'\setminus S')$.
\item[$(4)$] $Z_1\cap((U\setminus S)\times X')\subseteq(U\setminus S)\times(X'\setminus S')$.
\end{enumerate}
\end{corollary}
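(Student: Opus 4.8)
The plan is to read off all four inclusions directly from the properties recorded in \Cref{prop:fu2}, with no additional geometric input; the real work already sits in \Cref{prop:fu2} itself (which is imported from \cite{hty-inv}), and the corollary is a matter of specialising the $\theta$-coordinate. Indeed, inclusion $(1)$ is precisely property $(d)$ of \Cref{prop:fu2}, and inclusion $(4)$ is precisely property $(f)$, so for these there is nothing further to prove.

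For $(3)$, recall that $Z_1'=(h_1')^{-1}(0)$ with $h_1'=h_\theta'|_{1\times U'\times_B X'}$, so that $Z_1'=Z_\theta'\cap(\{1\}\times U'\times_B X')$ under the evident identification of $U'\times_B X'$ with $\{1\}\times U'\times_B X'$. Intersecting with $(U'\setminus S')\times_B X'$ and applying property $(d)$ yields
\[
Z_1'\cap\bigl((U'\setminus S')\times_B X'\bigr)=Z_\theta'\cap\bigl(\{1\}\times(U'\setminus S')\times_B X'\bigr)\subseteq\{1\}\times(U'\setminus S')\times_B(X'\setminus S'),
\]
which is exactly $(3)$. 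The argument for $(2)$ is identical with $\theta=1$ replaced by $\theta=0$; alternatively one may invoke property $(b)$ directly, since a point of $\Delta'(U')\cap((U'\setminus S')\times_B X')$ has the form $(u,\can'(u))$ with $u\in U'\setminus S'$, whence $\can'(u)\in X'\setminus S'$ (identifying $S'$ with its preimage in $U'$ as usual), while $G'\subseteq U'\times_B(X'\setminus S')$ by hypothesis.

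Since every step is pure bookkeeping, I do not anticipate a genuine obstacle; the only point needing care is to keep straight that the schemes $Z_0'$, $Z_1'$, $Z_1$ appearing in \Cref{prop:fu2} are cut out by the three different functions $h_0'$, $h_1'$, $F$, and to track which ambient product---$\A^1\times U'\times_B X'$, $U'\times_B X'$, or $U\times X'$---each inclusion is asserted in. This is entirely parallel to the deduction of \Cref{cor:subsets} from \Cref{prop:fu}.
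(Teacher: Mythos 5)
Your proposal is correct: items $(1)$ and $(4)$ are verbatim restatements of properties $(d)$ and $(f)$ of \Cref{prop:fu2}, and $(2)$, $(3)$ follow by specialising $\theta=0,1$ in $(d)$ (or, for $(2)$, from $(b)$ as you note). The paper gives no argument of its own here---it simply cites \cite[Remark 10.6]{hty-inv}---and your bookkeeping is exactly the intended deduction, parallel to how \Cref{cor:subsets} follows from \Cref{prop:fu}.
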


\begin{definition}\label{def:homotopy2}
Choose a trivialization $\chi$ of $\omega_{H_\theta'\times1}$. We define $\scrH^{\chi}_\theta\in\wt\Cor_k(\A^1\times U',X')$ as the image of the projection $p_{X'}\in\wt\CH^{d_X}_{\Gamma_{p_{X'}}}(\A^1\times U'\times_B X'\times X',\omega_{X'})$ under the composition
\begin{align*}
&\wt\CH^{d_X}_{\Gamma_{p_{X'}}}(\A^1\times U'\times_BX'\times X',\omega_{X'})\\
&\xrightarrow{\cong} \wt\CH_{\Gamma_{p_{X'}}}^{d_X}(\A^1\times U'\times_B X'\times X',\omega_{H_\theta'\times1}\otimes\omega_{X'})\\
&\xrightarrow{(H_\theta'\times1)_*} \wt\CH^{d_X}_{(H_\theta'\times1)(\Gamma_{p_{X'}})}(\A^1\times U'\times \A^1\times X',\omega_{X'})\\
&\xrightarrow{(1\times i_0\times1)^*} \wt\CH^{d_X}_{T'}(\A^1\times U'\times X',\omega_{X'}),
\end{align*}
 where  $T'\defeq(1\times i_0\times 1)^{-1}((H_\theta'\times1)(\Gamma_{p_{X'}}))$, and where the first isomorphism is induced by $\chi'$.
\end{definition}

The same argument as in \Cref{lemma:supp} readily yields:

\begin{lemma}
The finite $\rmMW$-correspondence $\scrH^\chi_\theta$ is supported on $Z_\theta'$. Moreover, for $\epsilon=0,1$ we have $\supp\scrH^\chi_\epsilon=Z_\epsilon'$ (where, as usual, $\scrH^\chi_\epsilon\defeq\scrH^\chi_\theta\circ i_\epsilon$).
\end{lemma}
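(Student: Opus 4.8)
The plan is to transcribe the proof of \Cref{lemma:supp} essentially word for word, since \Cref{def:homotopy2} differs from \Cref{def:homotopy} only by priming all the symbols. The single input that must be re-checked is that the finite surjective morphism $H_\theta'\times 1$ is \emph{injective} on the graph $\Gamma_{p_{X'}}$, so that it identifies $\Gamma_{p_{X'}}$ with its scheme-theoretic image; this is the analogue of the point-separation computation carried out in \Cref{lemma:support}.

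First I would verify this injectivity. A point of $\Gamma_{p_{X'}}$ has the form $((t,u',x'),x')$ with $(t,u',x')\in\A^1\times U'\times_B X'$, and $H_\theta'\times1$ sends it to $\big((t,u',h_\theta'(t,u',x')),x'\big)\in\A^1\times U'\times\A^1\times X'$. If two such points have the same image, comparing the first two coordinates and the last forces $t$, $u'$ and $x'$ to agree, so the points coincide. Hence $H_\theta'\times1$ maps $\Gamma_{p_{X'}}\cong\A^1\times U'\times_B X'$ isomorphically onto its image.

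Next, since the projection $p_{X'}$ is supported on $\Gamma_{p_{X'}}$, \Cref{def:homotopy2} gives that the support $T'$ of $\scrH_\theta'$ equals $(1\times i_0\times1)^{-1}\big((H_\theta'\times1)(\Gamma_{p_{X'}})\big)$. Under the identification above, $(H_\theta'\times1)(\Gamma_{p_{X'}})$ is the locus of points $(t,u',h_\theta'(t,u',x'),x')$, so pulling back along $1\times i_0\times1$ (which sets the middle $\A^1$-coordinate to $0$) cuts out exactly those $(t,u',x')$ with $h_\theta'(t,u',x')=0$; that is, $T'=Z_\theta'$ by the definition of $Z_\theta'$ in \Cref{prop:fu2}~$(a)$. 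For the endpoints, $\scrH_\epsilon'=\scrH_\theta'\circ i_\epsilon=(i_\epsilon\times1)^*(\scrH_\theta')$ by \cite[Example 4.17]{Calmes-Fasel}, and pulling back along $i_\epsilon\times1$ amounts to substituting $\theta=\epsilon$ into $h_\theta'$; hence $\supp\scrH_\epsilon'=(h_\epsilon')^{-1}(0)=Z_\epsilon'$.

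I do not anticipate any genuine obstacle: the statement is a pure support computation, and every step is formal once the injectivity of $H_\theta'\times1$ on $\Gamma_{p_{X'}}$ is established. The only mild point of care is bookkeeping for which $\A^1$-factor the various pullbacks act on, exactly as in the proof of \Cref{lemma:supp}.
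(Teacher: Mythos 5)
Your proposal is correct and follows exactly the route the paper takes: the paper's proof of this lemma is simply the remark that the argument of \Cref{lemma:supp} (and the injectivity check of \Cref{lemma:support}) carries over verbatim to the primed objects, which is precisely what you have written out. No gaps.
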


\begin{lemma}\label{lemma:hty-0-12}
There are finite $\rmMW$-correspondences $\Psi'\in\wt\Cor_k(U,X')$ and $\Xi\in\wt\Cor_k(U',X'\setminus S')$ along with an invertible regular function $\lambda'$ on $U'$ such that $\scrH^\chi_0=\can'\circ\ip{\lambda'}+j_{X'}\circ\Xi$ and $\scrH^{\chi}_1=\Psi'\circ\pi$.
\end{lemma}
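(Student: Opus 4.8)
The plan is to mirror the proofs of \Cref{lemma:finding-Phi} and \Cref{lemma:hty-0-1}, now invoking \Cref{prop:fu2} in place of \Cref{prop:fu}. By the analogue of \Cref{lemma:supp} recorded just above, $\scrH_0'$ is supported on $Z_0'$ and $\scrH_1'$ on $Z_1'$.

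\emph{The start point $\scrH_0'$.} By \Cref{prop:fu2} $(b)$ we have $\supp\scrH_0'=Z_0'=\Delta'(U')\amalg G'$ with $G'\subseteq U'\times_B(X'\setminus S')\subseteq U'\times(X'\setminus S')$, so \Cref{lemma:conn-comp-sum} lets us write $\scrH_0'=\alpha'+\beta'$, with $\alpha'\in\wt\Cor_k(U',X')$ supported on $\Delta'(U')$ and $\beta'\in\wt\Cor_k(U',X')$ supported on $G'$. Since $\supp\beta'\subseteq U'\times(X'\setminus S')$, \Cref{cor:supp-open} produces a unique $\Xi\in\wt\Cor_k(U',X'\setminus S')$ with $j_{X'}\circ\Xi=\beta'$. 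To identify $\alpha'$ I would run the base change computation of \Cref{lemma:finding-Phi} verbatim: writing $\scrH_0'=(i_0\times1\times i_0\times1)^*(H_\theta'\times1)_*(\Gamma_{p_{X'}})_*(\ip{1})$ (using $(1\times i_0\times1)\circ(i_0\times1)=i_0\times1\times i_0\times1$) and applying the base change formula \cite[Proposition 3.2]{Calmes-Fasel} twice — exploiting that $\Delta'\colon U'\to U'\times_B X'$ is an isomorphism onto its image and that $H_\theta'|_{\Delta'(U')}\colon\Delta'(U')\to U'$ is an isomorphism (as $h_0'$ vanishes on $\Delta'(U')\subseteq Z_0'$ and the projection to $U'$ inverts $\Delta'$) — one obtains $\alpha'=(\Gamma_{\can'})_*(\ip{1})=\wt\gamma_{\can'}=\can'$. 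Hence $\scrH_0'=\can'+j_{X'}\circ\Xi$ with $\Xi\in\wt\Cor_k(U',X'\setminus S')$, as required.

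\emph{The end point $\scrH_1'$.} By \Cref{prop:fu2} $(c)$ and $(e)$, there is a function $F\in k[U\times X']$ with $h_1'=(\pi\times\id_{X'})^*(F)$ such that $(\pr_U,F)\colon U\times X'\to U\times\A^1$ is finite and surjective; thus $Z_1\defeq F^{-1}(0)$ is finite and surjective over $U$. Since $\omega_{U/k}\cong\calO_U$ and $\omega_{X'/k}\cong\calO_{X'}$, the relative bundle $\omega_{(\pr_U,F)\times1}$ is trivial, and after fixing a trivialization I would define $\Psi\in\wt\Cor_k(U,X')$ following \Cref{def:homotopy2}, but with the source $\A^1\times U'\times_B X'$ replaced by $U\times X'$ (no homotopy parameter) and $H_\theta'$ replaced by $(\pr_U,F)$: explicitly, $\Psi$ is the image of the projection $p_{X'}\in\wt\CH^{d_X}_{\Gamma_{p_{X'}}}(U\times X'\times X',\omega_{X'})$ under the trivialization isomorphism, then $((\pr_U,F)\times1)_*$, then $(1\times i_0\times1)^*$; it is supported on $Z_1$. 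Because $\Psi\circ\pi=(\pi\times\id_{X'})^*\Psi$ by \cite[Example 4.17]{Calmes-Fasel}, the equality $\Psi\circ\pi=\scrH_1'$ then follows from the base change formula \cite[Proposition 3.2]{Calmes-Fasel} applied to the square relating $(\pr_U,F)$ to $(p_{U'},h_1')$ via $\pi\times\id_{X'}$ — base change being legitimate since $\pi$, as a localization of the étale morphism $\Pi$, is flat — combined with $h_1'=(\pi\times\id_{X'})^*F$ and the fact that $(\pi\times\id_{X'})^*$ sends the chosen trivialization to a trivialization of the relevant relative bundle.

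\emph{Where the work is.} The first half is routine cycle bookkeeping copied from \Cref{lemma:finding-Phi}. The crux is the end-point identity $\Psi\circ\pi=\scrH_1'$: one has to isolate the correct Cartesian square over $U$ versus $U'$, verify flatness of $\pi$ so that \cite[Proposition 3.2]{Calmes-Fasel} applies, and keep careful track of the trivializations of the relative canonical bundles, so that the pushforward–pullback composite defining $\Psi$ really does pull back along $\pi$ to the already-constructed homotopy endpoint $\scrH_1'$. Granting this, the lemma follows exactly as in the proofs of \Cref{lemma:finding-Phi} and \Cref{lemma:hty-0-1}.
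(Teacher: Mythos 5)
Your treatment of $\scrH_0'$ coincides with the paper's, which simply reruns the argument of \Cref{lemma:hty-0-1} (itself modelled on \Cref{lemma:finding-Phi}) using \Cref{prop:fu2}$(b)$. For $\scrH_1'$ you take a genuinely different route. The paper does not construct $\Psi$ from $F$ at all: by \Cref{prop:fu2}$(c)$ the map $\pi\times1$ identifies the support $Z_1'$ of $\scrH_1'$ with $Z_1$, so étale excision \cite[Lemma 3.7]{Calmes-Fasel} gives an isomorphism $(\pi\times1)^*\colon\wt\CH^{d_X}_{Z_1}(U\times X',\omega_{X'})\to\wt\CH^{d_X}_{Z_1'}(U'\times X',\omega_{X'})$, and one \emph{defines} $\Psi$ as the preimage of $\scrH_1'$; the identity $\Psi\circ\pi=(\pi\times1)^*\Psi=\scrH_1'$ then holds by construction, and the only remaining check is that $Z_1$ is finite and surjective over $U$, which is \Cref{prop:fu2}$(e)$. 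Your forward construction---build $\Psi$ as the pushforward class attached to $F$ via $((\pr_U,F)\times1)_*$ and compare with $\scrH_1'$ by base change---is viable and more explicit, but it carries exactly the burden you flag at the end and do not discharge: the trivialization $\chi'$ of $\omega_{H_\theta'\times1}$ in \Cref{def:homotopy2} was chosen arbitrarily on the $U'$-side, and its restriction at $\theta=1$ need not be pulled back along $\pi$ from the $U$-side; if it is not, your $(\pi\times1)^*\Psi$ and $\scrH_1'$ differ by $\ip{v}$ for a unit $v$ not defined over $U$, and the endpoint identity fails as stated. This is repairable (fix $\chi'$ compatibly from the ambient trivializations of $\omega_{X'/k}$ and $\omega_{B/k}$ from the outset), but it is precisely the bookkeeping that the paper's backward definition of $\Psi$ through the excision isomorphism renders vacuous; you should either carry out that compatibility explicitly or switch to the excision argument.
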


\begin{proof}
The claim about $\scrH^{\chi}_0$ follows from an identical argument as in the proof of \Cref{lemma:hty-0-1} by using \Cref{prop:fu2} $(b)$, so let us turn our attention to $\scrH^\chi_1$. By \Cref{prop:fu2} $(c)$, the morphism $\pi\times1$ identifies $Z_1'$ with $Z_1$. By étale excision \cite[Lemma 3.7]{Calmes-Fasel}, $\pi\times1$ induces an isomorphism
\[
(\pi\times1)^*\colon\wt\CH_{Z_1}^{d_X}(U\times X',\omega_{X'})\xrightarrow{\cong}\wt\CH_{Z_1'}^{d_X}(U'\times X',\omega_{X'}).
\]
Arguing similarly to the proof of \Cref{lemma:hty-0-1}, it follows that there exists a unique element $\Psi'\in\wt\CH_{Z_1'}^{d_X}(U'\times X',\omega_{X'})\subseteq\wt\Cor_k(U',X')$ such that $\scrH^\chi_1=\Psi'\circ\pi$.
\end{proof}

Let us check also that the finite $\rmMW$-correspondences constructed above are in fact morphisms of pairs:

\begin{lemma}\label{lemma:bunch-of-pairs2}
Let $j_{U'}\colon U'\setminus S'\hookrightarrow U'$ denote the inclusion, and define:
\begin{align*}
\scrH^{\chi,S'}_\theta &\defeq(1\times j_{U'}\times j_{X'})^*(\scrH^\chi_\theta)\in\wt\Cor_k(\A^1\times (U'\setminus S'),X'\setminus S').\\
\Psi^{S'}&\defeq (j_U\times j_{X'})^*(\Psi)\in\wt\Cor_k(U\setminus S,X'\setminus S').\\
\Xi^{S'}&\defeq(j_{U'}\times1)^*(\Xi)\in \wt\Cor_k(U'\setminus S',X'\setminus S').
\end{align*}
Then
\begin{align*}
(\scrH^\chi_\theta,\scrH^{\chi,S'}_\theta)&\in\wt\Cor_k^\pair(\A^1\times(U',U'\setminus S'),(X',X'\setminus S')).\\
(\Psi,\Psi^{S'})&\in\wt\Cor_k^\pair((U,U\setminus S),(X',X'\setminus S')).\\
(\Xi,\Xi^{S'})&\in\wt\Cor_k^{\pair}((U',U'\setminus S'),(X'\setminus S',X'\setminus S')).
\end{align*}
\end{lemma}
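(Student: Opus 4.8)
The plan is to observe that \Cref{lemma:bunch-of-pairs2} is the surjectivity-side analogue of \Cref{lemma:bunch-of-pairs}, and to prove it the same way: each of the three pairs in the statement has the shape $(\alpha,(j\times j')^*\alpha)$ for a finite $MW$-correspondence $\alpha$ whose support has already been pinned down, so by \Cref{lemma:making-pairs} it is enough to verify the relevant containment of supports, and those containments are exactly what \Cref{cor:subsets2} provides. One first records that the three pullbacks $(\scrH_\theta')^{S'}=(1\times j_{U'}\times j_{X'})^*\scrH_\theta'$, $\Psi^{S'}=(j_U\times j_{X'})^*\Psi$ and $\Xi^{S'}=(j_{U'}\times 1)^*\Xi$ are precisely the maps $(j_X\times j_Y)^*\alpha$ appearing in \Cref{lemma:making-pairs}, so no compatibility beyond the support condition is needed.

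For $(\scrH_\theta',(\scrH_\theta')^{S'})$ I would apply \Cref{lemma:making-pairs} with $(X,U)=(\A^1\times U',\A^1\times(U'\setminus S'))$ and $(Y,V)=(X',X'\setminus S')$. Viewing $\scrH_\theta'\in\wt\Cor_k(\A^1\times U',X')$, its support is $Z_\theta'$, which we regard as a closed subscheme of $\A^1\times U'\times X'$ through $\A^1\times U'\times_B X'\hookrightarrow\A^1\times U'\times X'$; then $Z_\theta'\cap(\A^1\times(U'\setminus S')\times X')=Z_\theta'\cap(\A^1\times(U'\setminus S')\times_B X')$, which lies in $\A^1\times(U'\setminus S')\times_B(X'\setminus S')$ by \Cref{cor:subsets2}(1), hence a fortiori in $\A^1\times(U'\setminus S')\times(X'\setminus S')$. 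For $(\Psi,\Psi^{S'})$ I would apply \Cref{lemma:making-pairs} with $(X,U)=(U,U\setminus S)$ and $(Y,V)=(X',X'\setminus S')$: by the construction in \Cref{lemma:hty-0-12}, $\Psi\in\wt\Cor_k(U,X')$ is supported on $Z_1=F^{-1}(0)\subseteq U\times X'$, which is finite and surjective over $U$ by \Cref{prop:fu2}(e), and the required containment $Z_1\cap((U\setminus S)\times X')\subseteq(U\setminus S)\times(X'\setminus S')$ is \Cref{cor:subsets2}(4). Finally, for $(\Xi,\Xi^{S'})$ I would apply \Cref{lemma:making-pairs} with $(X,U)=(U',U'\setminus S')$ and $(Y,V)=(X'\setminus S',X'\setminus S')$; since here the target pair satisfies $V=Y$, the support hypothesis is vacuous, and the only input is that $\Xi\in\wt\Cor_k(U',X'\setminus S')$, which is part of \Cref{lemma:hty-0-12}.

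The residual points are minor. The connectedness hypotheses of \Cref{lemma:making-pairs} hold because $U$ and $U'$ are spectra of local rings (hence connected, as is $\A^1\times U'$), $X'$ is connected by the shrinking performed at the start of \Cref{section:Nis-surj}, and $X'\setminus S'$ is connected since $X'$ is smooth and connected, hence irreducible, and $S'$ is a proper closed subset. I do not expect a genuine obstacle here: the lemma is a bookkeeping step that feeds the correspondences of \Cref{lemma:hty-0-12} into \Cref{lemma:making-pairs}. The one place to stay alert is that $\Psi$ is a correspondence over $U$ (not $U'$) --- this is forced by $\scrH_1'=\Psi\circ\pi$ --- so its support must be taken to be $Z_1\subseteq U\times X'$ rather than $Z_1'\subseteq U'\times X'$, and correspondingly it is \Cref{cor:subsets2}(4), not (3), that is relevant.
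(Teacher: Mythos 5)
Your proof is correct and is essentially the paper's own argument, which simply observes that by \Cref{cor:subsets2} the supports of the three correspondences satisfy the hypothesis of \Cref{lemma:making-pairs}. Your closing remark is also well taken: the paper's proof of \Cref{lemma:hty-0-12} contains a slip writing $\Psi\in\wt\CH^{d_X}_{Z_1'}(U'\times X',\omega_{X'})$, whereas $\scrH_1'=\Psi\circ\pi$ forces $\Psi$ to live over $U$ with support $Z_1\subseteq U\times X'$, so \Cref{cor:subsets2}(4) is indeed the relevant containment.
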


\begin{proof}
By \Cref{cor:subsets2}, the supports of the given $\rmMW$-correspondences satisfy the hypothesis of \Cref{lemma:making-pairs}.
\end{proof}

We are almost in position to prove \Cref{thm:Nis-surj}. However, as opposed to the situation in \Cref{section:Nis-inj} we cannot immediately precompose the homotopy $\scrH_\theta^\chi$ of \Cref{def:homotopy2} with $\ip{(\lambda')^{-1}}$ and obtain a homotopy of the desired form. In order to remedy this, we need the following lemma (see also \cite[Proof of Proposition 6.7]{DruGW}):

\begin{lemma}\label{lemma:trick}
Let $X$, $S$, $U$ and $\can$ be as in \Cref{thm:Nis-excision}, and suppose that $\lambda\in k[U]^\times$ is an invertible regular function satisfying $\lambda|_{U\cap S}=1$. Then
\[\can\circ\ip{\lambda}\sim_{\A^1}\can\in\wt\Cor_k^\pair((U,U\setminus S),(X,X\setminus S)).\]
\end{lemma}

\begin{proof}
The germ $\lambda$ is represented by an invertible section $\mu$ on some smooth affine Zariski open neighborhood $W$ of $x$ in $X$. Moreover, by assumption there is some affine Zariski open neighborhood $W'\subseteq W$ of $x$ such that $\mu|_{S\cap W'}=1$. Replacing $X$ by $W'$, we may assume that $\mu$ is an invertible regular function on the smooth affine $k$-scheme $X$ satisfying $\mu|_S=1$. 

Define an étale covering $\Pi\colon X'\to X$ by letting $X'\defeq\Spec(k[X][t]/(t^2-\mu))$. Consider the closed subscheme $S'\defeq \Spec(k[S][t]/(t-1))$ of $X'$. As $\Pi$ induces an isomorphism $S'\xrightarrow{\cong} S$, it follows that we have an elementary distinguished Nisnevich square
\[\begin{tikzcd}
X'\setminus S'\ar{r}\ar{d} & X'\ar[d,"\Pi"]\\
X\setminus S\ar{r} & X.
\end{tikzcd}\]
Thus \Cref{thm:Nis-inj} provides us with a finite MW-correspondence $\Phi\in\wt\Cor_k(U,X')$ such that $\Pi\circ\Phi\sim_{\A^1}\can$ as correspondences of pairs. But then
\begin{align*}
\can\circ\ip\lambda &= \ip{\mu}\circ\can\\
&\sim_{\A^1}\ip{\mu}\circ\Pi\circ\Phi\\
&= \Pi\circ\ip{\Pi^*(\mu)}\circ\Phi\\
&=\Pi\circ\ip{t^2}\circ\Phi\\
&=\Pi\circ\Phi\sim_{\A^1}\can,
\end{align*}
where we have used the left and right actions of $\K_0^{\rmMW}$ on finite MW-correspondences \cite[Example 4.14]{Calmes-Fasel}, along with the fact that $\ip{a^2}=1$ in $\rmK_0^{\rmMW}$.
\end{proof}

\begin{proof}[Proof of \Cref{thm:Nis-surj}]
Using that $k(x)\cong k(x')$, we can find an invertible regular function $\nu$ on $U$ such that $\pi^*(\nu)(x')=\lambda'(x')^{-1}$. Then put
\[
\scrH_\theta\defeq\scrH_\theta^{\chi}\circ\ip{\pi^*(\nu)}
\]
and
\[
\Psi\defeq\Psi'\circ\ip\nu.
\]
By Lemmas \ref{lemma:hty-0-12} and \ref{lemma:bunch-of-pairs2}, $\scrH_\theta$ provides a homotopy of correspondences of pairs
\[
\can'\circ\ip{\lambda'\cdot\pi^*(\nu)}\sim_{\A^1}\Psi'\circ\pi\circ\ip{\pi^*(\nu)}=\Psi'\circ\ip\nu\circ\pi=\Psi\circ\pi.
\]
We conclude by noting that $\can'\circ\ip{\lambda'\cdot\pi^*(\nu)}\sim_{\A^1}\can'$ by \Cref{lemma:trick}.
\end{proof}

\section{Homotopy invariance}\label{section:hty-inv}

In this section we show, following \cite[Proof of Theorem 2.1]{hty-inv} and \cite{Dru14}, how homotopy invariance of the sheaves $\scrF_\Zar$ and $\scrF_\Nis$ follows from the excision theorems along with injectivity for local schemes. Throughout this section $\scrF$ will denote a homotopy invariant presheaf with $\rmMW$-transfers, and $X\in\Sm_k$ will denote a smooth irreducible $k$-scheme with generic point $\eta\colon \Spec(k(X))\to X$. Write $K\defeq k(X)$ for the function field of $X$.

In this section, the field $k$ is assumed to be of characteristic $0$.

\subsection*{Homotopy invariance of $\scrF_\Zar$}Below we will use Zariski excision along with injectivity for local schemes to show homotopy invariance of the Zariski sheaf $\scrF_\Zar$ associated to $\scrF$. Let $x\in X$ be a closed point of $X$. We may write $\scrF(\Spec(\calO_{X,x}))$ or $\scrF(\calO_{X,x})$ for the stalk $\scrF_x$ of $\scrF$ at $x$ in the Zariski topology.

\begin{lemma}\label{lemma:injective}
The natural map $\eta^*\colon\scrF(\calO_{X,x})\to \scrF(K)$ is injective.
\end{lemma}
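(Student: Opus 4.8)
The statement to prove is \Cref{lemma:injective}: the natural map $\scrF(\calO_{X,x})\to\scrF(k(X))$ is injective, where $\scrF$ is a homotopy invariant presheaf with $MW$-transfers, $X$ is smooth irreducible, and $x\in X$ is a closed point. The plan is to reduce this to \Cref{cor:vanishing-section}, the ``vanishing section'' consequence of injectivity for local schemes (\Cref{thm:inj-loc-sch}). Write $U\defeq\Spec\calO_{X,x}$. Since $\scrF(\calO_{X,x})=\colim_{x\in W}\scrF(W)$ over open neighborhoods $W$ of $x$, any element of $\ker(\scrF(U)\to\scrF(k(X)))$ is represented by a section $s\in\scrF(W)$ for some open irreducible affine $W\ni x$; and because $\scrF(k(X))=\colim_{\varnothing\ne W'\subseteq X}\scrF(W')$, the hypothesis $s\mapsto 0$ in $\scrF(k(X))$ means there is a nonempty open $W'\subseteq W$ with $s|_{W'}=0$.

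The key step is then the following: let $Z\defeq W\setminus W'$, a proper closed subset of $W$. Shrinking $W$ if necessary we may assume $x\in Z$ (if $x\notin Z$ then already $s$ vanishes on a neighborhood of $x$, so $s|_U=0$ and we are done immediately). Now apply \Cref{cor:vanishing-section} with the smooth scheme $W$, the closed point $x\in W$, and the closed subscheme $Z\subseteq W$: since $s|_{W\setminus Z}=s|_{W'}=0$ (note $W\setminus Z=W'$ as open subschemes, or at least $W'\subseteq W\setminus Z$ so $s$ restricts to zero there too), we conclude $s|_U=0$. Hence the original element of $\ker(\scrF(\calO_{X,x})\to\scrF(k(X)))$ is zero, proving injectivity.

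\textbf{The main technical point} to handle carefully is the passage through colimits: one must check that an element of $\scrF(\calO_{X,x})$ mapping to $0$ in $\scrF(k(X))$ is \emph{already} represented by a section over some open $W$ that dies on a nonempty open subset, rather than merely in the stalk at the generic point. This is exactly the standard ``eventual vanishing'' property of filtered colimits: if $\bar s\in\colim\scrF(W)$ maps to $0$ in $\colim\scrF(W')$, pick a representative $s\in\scrF(W)$; its image in $\scrF(k(X))$ is $0$, hence (again by the colimit being filtered, the opens of $X$ being directed under reverse inclusion) there is $\varnothing\ne W'\subseteq W$ with $s|_{W'}=0$. I expect no genuine obstacle here — the content is entirely in \Cref{thm:inj-loc-sch}, which has already been proved; the lemma is a routine unwinding. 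One minor care point: one should make sure $W$ can be taken affine and irreducible so that \Cref{thm:inj-loc-sch} (stated for $X\in\Sm_k$, which the conventions take to be smooth separated of finite type) applies, but this is harmless since affine open neighborhoods of $x$ are cofinal.
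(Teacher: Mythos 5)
Your proof is correct and follows essentially the same route as the paper: represent the germ by a section over an open neighborhood, use the filtered-colimit description of $\scrF(k(X))$ to find a nonempty open where it vanishes, and apply \Cref{cor:vanishing-section} to the complement of that open. Your version is in fact slightly more careful than the paper's about where the representative section lives (the paper tacitly applies the corollary on $X$ itself), and the remark about affineness is unnecessary but harmless, since \Cref{thm:inj-loc-sch} only needs $W$ smooth.
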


\begin{proof}
For $U\defeq\Spec(\calO_{X,x})$ we have $\scrF(U)=\varinjlim_{V\ni x}\scrF(V)$, and $\scrF(K)=\varinjlim_{W\ne\varnothing}\scrF(W)$. Let $s_x\in \scrF(\calO_{X,x})$ be a germ mapping to $0$ in $\scrF(K)$. This means that there is some nonempty open $W\subseteq X$ such that $s|_W=0$. If $x\in W$ then $s_x=0$ in $\scrF(\calO_{X,x})$ and we are done. So suppose that $x\not\in W$, and let $Z$ denote the closed complement of $W$ in $X$. Then $s|_{X\setminus Z}=0$, and thus \Cref{cor:vanishing-section} applies, yielding $s_x=0$ in $\scrF(\calO_{X,x})$.
\end{proof}

\begin{corollary}\label{cor:gen-stalk}
The map $\eta^*\colon \scrF_\Zar(X)\to \scrF_\Zar(K)$ is injective.
\end{corollary}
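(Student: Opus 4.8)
The plan is to deduce this from \Cref{lemma:injective} by a support argument for the Zariski sheaf $\scrF_\Zar$. First I would record the relevant stalk identifications. Since sheafification does not change stalks, and since (exactly as used in the proof of \Cref{lemma:injective}, where $\scrF(\Spec\calO_{X,x})=\colim_{V\ni x}\scrF(V)$) the presheaf $\scrF$ commutes with the cofiltered limits of opens defining the local schemes, there is a natural identification $(\scrF_\Zar)_x\cong\scrF(\calO_{X,x})$ for every point $x\in X$. As $X$ is smooth and irreducible, hence integral, its generic point $\eta$ satisfies $\calO_{X,\eta}=k(X)$, so $(\scrF_\Zar)_\eta\cong\scrF(k(X))$ and the map $\eta^*\colon\scrF_\Zar(X)\to\scrF_\Zar(k(X))$ is identified with the germ map $s\mapsto s_\eta$.

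Next, suppose $s\in\scrF_\Zar(X)$ satisfies $\eta^*(s)=0$, and let $x\in X$ be an arbitrary closed point. For the sheaf $\scrF_\Zar$ the generization morphism $(\scrF_\Zar)_x\to(\scrF_\Zar)_\eta$ carries the germ $s_x$ to $s_\eta=0$; under the identifications of the previous paragraph this morphism is precisely the natural map $\scrF(\calO_{X,x})\to\scrF(k(X))$ appearing in \Cref{lemma:injective}. Since that map is injective, we get $s_x=0$ in $(\scrF_\Zar)_x$.

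Finally I would run the support argument. The subset $Z\defeq\{x\in X:s_x\neq0\}$ is the complement of the largest open subset of $X$ over which $s$ restricts to zero, hence $Z$ is closed in $X$; by the previous step $Z$ contains no closed point of $X$. But $X$, being of finite type over the field $k$, is a Jacobson scheme, so every nonempty closed subset of $X$ contains a point that is closed in $X$. Therefore $Z=\varnothing$, so $s$ restricts to zero on an open cover of $X$, and the sheaf axiom gives $s=0$ in $\scrF_\Zar(X)$. This proves that $\eta^*$ is injective.

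I do not expect any step to be a genuine obstacle. The only points deserving a line of justification are the stalk identification $(\scrF_\Zar)_x\cong\scrF(\calO_{X,x})$, which is already implicit in \Cref{lemma:injective}, and the compatibility of the sheaf-theoretic generization maps with $\eta^*$ and with the natural map of \Cref{lemma:injective}; both are formal.
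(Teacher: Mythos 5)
Your argument is correct and is essentially the paper's own proof: apply \Cref{lemma:injective} at each closed point to kill the germs $s_x$, then conclude $s=0$; you merely spell out the final step (closedness of the non-vanishing locus plus the Jacobson property) that the paper leaves implicit.
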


\begin{proof}
Suppose that $s\in \scrF_\Zar(X)$ maps to $0$ in $\scrF_\Zar(K)$. By \Cref{lemma:injective}, the germs $s_x\in \scrF_x$ of $s$ vanish at all closed points of $X$, which yields $s=0$.
\end{proof}

\begin{corollary}\label{cor:inj}
For any nonempty open subscheme $i\colon V\hookrightarrow X$, the map $i^*\colon \scrF_\Zar(X)\to \scrF_\Zar(V)$ is injective.
\end{corollary}

\begin{proof}
We know that $K=k(V)$, hence \Cref{cor:gen-stalk} ensures that there are injections $\scrF_\Zar(X)\hookrightarrow \scrF(K)$ and $\scrF_\Zar(V)\hookrightarrow \scrF(K)$ induced by the generic point. Since $\scrF_\Zar(X)\hookrightarrow \scrF(K)$ factors through $\scrF_\Zar(V)$, the result follows.
\end{proof}

For the next lemma we will need to pass to the presheaf $\scrF^X$ on $\wt\Cor_K^X$, defined in \Cref{section:A1K}.

\begin{lemma}\label{lemma:exc-colim}
Let $x$ be a closed point in $\A^1_K$, and write $U_x\defeq\Spec(\calO_{\A^1_K,x})$ for its local scheme. Then the restriction map
\[
\dfrac{\scrF^X(\A^1_K\setminus x)}{\scrF^X(\A^1_K)}\xrightarrow{\cong}\dfrac{\scrF^X(U_x\setminus x)}{\scrF^X(U_x)}
\]
is an isomorphism.
\end{lemma}

\begin{proof}
We have
\[
\frac{\scrF^X(U_x\setminus x)}{\scrF^X(U_x)}=\varinjlim_{W\ni x}\frac{\scrF^X(W\setminus x)}{\scrF^X(W)},
\]
and so Zariski excision on $\A^1_K$ (\Cref{lemma:Zar-exc-A1K}) applied to the pair $x\in W\subseteq \A^1_K$ yields an isomorphism
\[
\dfrac{\scrF^X(\A^1_K\setminus x)}{\scrF^X(\A^1_K)}\xrightarrow{\cong}\dfrac{\scrF^X(W\setminus x)}{\scrF^X(W)}.
\]
The isomorphism is given by the pullback along the inclusion, so it is compatible with the transition maps in the directed system. It follows that the natural map from $\scrF^X(\A^1_K\setminus x)/\scrF^X(\A^1_K)$ to the colimit $\scrF^X(U_x\setminus x)/\scrF^X(U_x)$ is an isomorphism.
\end{proof}

\begin{lemma}\label{lemma:res-zar-site}
The sheafification map $\psi\colon \scrF^X(\A^1_{K})\to \scrF_\Zar^X(\A^1_{K})$ is an isomorphism.
\end{lemma}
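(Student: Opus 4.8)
The plan is to show that, when evaluated at the object $\A^1_{k(X)}$ itself, the restriction of $\scrF$ to the small Zariski site of $\A^1_{k(X)}$ already satisfies the sheaf axiom, by treating injectivity and surjectivity of $\psi$ separately. Write $K\defeq k(X)$, so $\A^1_K=\Spec K[t]$ is a regular irreducible affine curve whose only non-closed point is the generic point, with function field $K(t)$. Throughout I use the $K$-versions of \Cref{thm:inj-aff-line}, \Cref{thm:Zar-excision}, \Cref{lemma:injective} and \Cref{cor:gen-stalk}, obtained from the versions over $k$ by writing $\Spec K=\varprojlim_V V$ over the nonempty affine opens $V\subseteq X$, applying those results to the smooth affine $k$-schemes $\A^1_V$ and their open and local subschemes, and passing to the colimit (using that $\scrF$ and $\scrF_\Zar$ are continuous along cofiltered limits of affine $k$-schemes with affine transitions). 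The first step is the key injectivity statement: \emph{for every nonempty open $U\subseteq\A^1_K$, restriction to the generic point gives an injection $\scrF(U)\hookrightarrow\scrF(K(t))$}. To see this, fix a closed point $x\in U$; by \Cref{thm:inj-aff-line} the map $\scrF(U)\to\scrF(V)$ is split injective for every open neighbourhood $V$ of $x$ with $V\subseteq U$, and passing to the colimit over the cofiltered system of such $V$ yields an injection $\scrF(U)\hookrightarrow\scrF(\calO_{\A^1_K,x})$; composing with the injection $\scrF(\calO_{\A^1_K,x})\hookrightarrow\scrF(K(t))$ of \Cref{lemma:injective} gives the claim. In particular $\scrF(\A^1_K)\hookrightarrow\scrF(K(t))$; since the restriction map $\scrF(\A^1_K)\to\scrF(K(t))$ factors through $\psi$ (naturality of sheafification, together with the identification of the generic stalk of $\scrF_\Zar$ with that of $\scrF$), it follows at once that $\psi$ is injective.

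For surjectivity I would show that every $\beta\in\scrF_\Zar(\A^1_K)$ lies in the image of $\psi$. Let $g\in\scrF(K(t))$ be the image of $\beta$ under the injection $\scrF_\Zar(\A^1_K)\hookrightarrow\scrF(K(t))$ of \Cref{cor:gen-stalk}. Since the stalks of $\scrF_\Zar$ coincide with those of $\scrF$, the germ of $\beta$ at the generic point is represented by an honest section $s\in\scrF(\A^1_K\setminus\{x_1,\dots,x_n\})$ for some finite set of closed points, and for each $j$ the germ $\beta_{x_j}\in\scrF_{x_j}=\varinjlim_{W\ni x_j}\scrF(W)$ shows that $g$ extends to a section over some open neighbourhood of $x_j$. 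Now remove the ``pole set'' one point at a time. At the inductive step, with $s$ defined over $\A^1_K\setminus\{x_1,\dots,x_n\}$ and a section $t_n$ over a neighbourhood $W_n\subseteq\A^1_K\setminus\{x_1,\dots,x_{n-1}\}$ of $x_n$, both restricting to $g$, \Cref{thm:Zar-excision} applied with $U=\A^1_K\setminus\{x_1,\dots,x_{n-1}\}$, $V=W_n$ and point $x_n$ gives an isomorphism
\[
\frac{\scrF(\A^1_K\setminus\{x_1,\dots,x_n\})}{\scrF(\A^1_K\setminus\{x_1,\dots,x_{n-1}\})}\;\xrightarrow{\ \cong\ }\;\frac{\scrF(W_n\setminus x_n)}{\scrF(W_n)}
\]
carrying the class of $s$ to the class of $s|_{W_n\setminus x_n}$, which vanishes because $s|_{W_n\setminus x_n}=t_n|_{W_n\setminus x_n}$ (both restrict to $g$, and $\scrF(W_n\setminus x_n)\hookrightarrow\scrF(K(t))$ by the previous paragraph) lies in the image of $\scrF(W_n)$. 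Hence $s$ extends to a section over $\A^1_K\setminus\{x_1,\dots,x_{n-1}\}$ that still restricts to $g$; after $n$ steps $g$ comes from some $s\in\scrF(\A^1_K)$, and since $\psi(s)$ and $\beta$ have the same image in $\scrF(K(t))$ we get $\psi(s)=\beta$.

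I expect the main obstacle to be foundational rather than conceptual: verifying rigorously that \Cref{thm:inj-aff-line}, \Cref{thm:Zar-excision}, \Cref{lemma:injective} and \Cref{cor:gen-stalk} persist for the essentially smooth scheme $\A^1_{k(X)}$ and its open and local subschemes — especially as $k(X)$ need not be perfect — which is exactly the continuity bookkeeping with $\Spec k(X)=\varprojlim_V V$ and the stability of $\scrF$ and $\scrF_\Zar$ under the relevant cofiltered limits. Once this point is settled, the excision-and-induction argument above is routine, and it is worth noting that the same reasoning, applied to an arbitrary open $Y\subseteq\A^1_{k(X)}$ in place of $\A^1_{k(X)}$, shows that $\scrF$ is in fact a Zariski sheaf on the whole small site of $\A^1_{k(X)}$.
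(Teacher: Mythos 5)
Your proof is correct and follows essentially the same route as the paper's: injectivity via the identification of generic stalks, and surjectivity by representing the generic germ of $\beta$ on an open $V$, removing the missing closed points one at a time, and using Zariski excision to see that the obstruction class in $\scrF(V)/\scrF(\A^1_K)$ vanishes because the germ of $\beta$ at each deleted point already extends. The only cosmetic differences are that the paper deduces injectivity of $\psi$ from homotopy invariance ($\scrF(K)\cong\scrF(\A^1_K)$) rather than from the generic-stalk injection, and routes the excision step through the local scheme $\Spec\calO_{\A^1_K,x}$ instead of the open neighbourhood $W_n$; the continuity bookkeeping over $k(X)$ that you flag is likewise implicit in the paper's own argument.
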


\begin{proof}
Let $\xi$ be the generic point of $\A^1_K$. Since stalks remain the same after sheafification, the commutative diagram
\[\begin{tikzcd}
\scrF^X(K)\ar{r}{\cong}[swap]{p^*}\ar[dr]\ar[ddr,bend right,"\cong"] & \scrF^X(\A^1_K)\ar[d,"\psi"]\\
& \scrF^X_\Zar(\A^1_K)\ar{d}{i_0^*}\\
& \scrF^X_\Zar(K)
\end{tikzcd}\]
(in which $p^*$ is an isomorphism by \Cref{rem:Xhtpy2}) shows that $\psi$ is injective. It remains to show surjectivity.
 
Let $s\in \scrF^X_\Zar(\A^1_K)$ be a section, mapping to the germ $s_\xi\in \scrF^X_\xi$ at the generic point $\xi$ of $\A^1_K$ under the morphism $\xi^*\colon\scrF^X_\Zar(\A^1_K)\rightarrow \scrF^X_\xi$. As $\scrF^X_\xi=\varinjlim_{V\subseteq\A^1_K}\scrF^X(V)$, we can find a nonempty Zariski open $V\subseteq\A^1_K$ and a section $s'\in \scrF^X(V)$ such that $\psi(s')=s|_V\in\scrF^X_\Zar(V)$. Thus $s'_v=s_v$ for any $v\in V$. The idea from here is to extend the section $s'\in \scrF^X(V)$ to a global section $s''\in\scrF^X(\A^1_K)$.

We may assume that $V=\A^1_K\setminus x$, where $x$ is a closed point. Indeed, the general case follows by induction since $V$ is then the complement of finitely many closed points. For $U_x\defeq\Spec(\calO_{\A^1_K,x})$, the commutative diagram
\[\begin{tikzcd}
U_x\setminus x\ar[r,hook]\ar[d,hook] & \A^1_K\setminus x \ar[d,hook]\\
U_x\ar[r,hook] & \A^1_K
\end{tikzcd}\]
induces a commutative diagram
\[\begin{tikzcd}
\dfrac{\scrF^X(V)}{\scrF^X(\A^1_K)}\ar[r,"\cong"] & \dfrac{\scrF^X(U_x\setminus x)}{\scrF^X(U_x)}\\
\scrF^X(V)\ar[u]\ar[r] & \scrF^X(U_x\setminus x)=\scrF^X_\xi\ar[u]\\
\scrF^X(\A^1_K)\ar[u]\ar[r] & \scrF^X(U_x).\ar[u]
\end{tikzcd}\]
Here the upper horizontal arrow is an isomorphism by \Cref{lemma:exc-colim}. Moreover, note that $\scrF^X(U_x\setminus x)$ and $\scrF^X(U_x)$ are both stalks, as $\scrF^X(U_x\setminus x)=\scrF^X_\xi$. Thus we have the isomorphism
\[\dfrac{\scrF^X(U_x\setminus x)}{\scrF^X(U_x)}\cong\dfrac{\scrF^X_\Zar(U_x\setminus x)}{\scrF^X_\Zar(U_x)}.\]
We want to lift $s'\in \scrF^X(V)$ to $\scrF^X(\A^1_K)$, which is possible if and only if $s'$ maps to $0$ in the cokernel of the map $\scrF^X(\A^1_K)\to\scrF^X(V)$.
But $s'$ maps to $s_\xi$ under the map $\scrF^X(V)\to \scrF^X(U_x\setminus x)$ by the choice of $s'$. Moreover, $s_\xi\in \scrF^X_\xi$ is the image of the germ $s_x\in \scrF^X(U_x)$ of $s$ at $x$. Hence $s_\xi$ vanishes in $\scrF^X(U_x\setminus x)/\scrF^X(U_x)$. By the excision isomorphism we conclude that $s'$ vanishes in $\scrF^X(V)/\scrF^X(\A^1_K)$, and hence there is a section $s''\in \scrF^X(\A^1_K)$ such that $s''|_V=s'$. 

Finally, we need to check that $s''\in \scrF^X(\A^1_K)$ maps to $s$ under the morphism $\psi\colon\scrF^X(\A^1_K)\to \scrF^X_\Zar(\A^1_K)$. It suffices to show that the germs of $s''$ and $s$ coincide at every point of $\A^1_K$. For the points $v\in V$ we know that $s|_V=\psi(s')=\psi(s''|_V)$, so it remains to check that $s''_x=s_x$ in $\scrF^X(U_x)$. By \Cref{lemma:injective} we have an injection
\[
\xi^*\colon\scrF^X(U_x)\hookrightarrow\scrF^X_\xi.
\]
Since $\xi^*(s_x)=\xi^*(s_x'')=s_\xi$, we conclude that $s''_x=s_x$.
\end{proof}

\begin{theorem}\label{thm:Zar-htpy-inv}
If $\scrF\in\wt\PSh(k)$ is a homotopy invariant presheaf with $\rmMW$-transfers, then $\scrF_\Zar$ is homotopy invariant.
\end{theorem}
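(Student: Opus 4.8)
The plan is to prove directly that for every $X\in\Sm_k$ the projection $p\colon X\times\A^1\to X$ induces an isomorphism $p^*\colon\scrF_\Zar(X)\to\scrF_\Zar(X\times\A^1)$, which is exactly homotopy invariance of the presheaf $\scrF_\Zar$. Since any object of $\Sm_k$ is a finite disjoint union of smooth irreducible schemes and $\scrF_\Zar$, being a sheaf, carries such disjoint unions to products, I would first reduce to the case that $X$ is irreducible; fix its generic point $\eta\colon\Spec k(X)\to X$.

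The heart of the argument is the commutative square
\[\begin{tikzcd}
\scrF_\Zar(X)\ar[r,"p^*"]\ar[d,swap,"\eta^*"] & \scrF_\Zar(X\times\A^1)\ar[d,"\ol\eta^*"]\\
\scrF(k(X))\ar[r,"p^*"] & \scrF_\Zar(\A^1_{k(X)})
\end{tikzcd}\]
obtained by restricting along $\eta$ and along the induced morphism on $\A^1$, where $\ol\eta^*$ denotes the canonical map $\scrF_\Zar(X\times\A^1)\to\colim_{W}\scrF_\Zar(\A^1_W)=\scrF_\Zar(\A^1_{k(X)})$, the colimit running over nonempty open $W\subseteq X$. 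I would check: (i) $\eta^*$ is injective, which is \Cref{cor:gen-stalk}; (ii) $\ol\eta^*$ is injective, because it is a filtered colimit of the maps $\scrF_\Zar(X\times\A^1)\to\scrF_\Zar(\A^1_W)$, each injective by \Cref{cor:inj} applied to the irreducible scheme $X\times\A^1$; and (iii) the bottom $p^*$ is an isomorphism, since it factors as
\[
\scrF_\Zar(k(X))=\scrF(k(X))\xrightarrow{\,p^*\,}\scrF(\A^1_{k(X)})\xrightarrow{\,\psi\,}\scrF_\Zar(\A^1_{k(X)}),
\]
where the first arrow is an isomorphism because $\scrF$ is homotopy invariant and the second is the sheafification isomorphism of \Cref{lemma:res-zar-site}.

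Granting this square, the theorem follows by a diagram chase. Because $p\circ i_0=\id_X$ for the zero section $i_0\colon X\to X\times\A^1$, the map $p^*$ is automatically split injective, so only surjectivity remains. For $s\in\scrF_\Zar(X\times\A^1)$ I would show $p^*(i_0^*s)=s$; by (ii) it suffices to verify this after applying $\ol\eta^*$. Since restriction to the generic fibre commutes with $p^*$ and with $i_0^*$, one gets $\ol\eta^*\bigl(p^*(i_0^*s)\bigr)=p^*\bigl(i_0^*(\ol\eta^*s)\bigr)$; and as the bottom $p^*$ is an isomorphism with $i_0^*p^*=\id$ on $\scrF(k(X))$, necessarily $i_0^*$ is its inverse, so $p^*i_0^*=\id$ on $\scrF_\Zar(\A^1_{k(X)})$ and the right-hand side equals $\ol\eta^*s$. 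Hence $p^*(i_0^*s)=s$, so $p^*$ is surjective, and therefore an isomorphism.

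All of the real content sits in the earlier sections — Zariski excision (\Cref{thm:Zar-excision}) and injectivity for local schemes (\Cref{cor:vanishing-section}, hence \Cref{thm:inj-loc-sch}), which underlie \Cref{cor:gen-stalk}, \Cref{cor:inj} and \Cref{lemma:res-zar-site} — so the present step is essentially formal. The only place calling for a modicum of care is the reduction to the generic fibre: checking that $\scrF_\Zar(\A^1_{k(X)})$ is computed by the filtered colimit of the $\scrF_\Zar(\A^1_W)$, so that $\ol\eta^*$ is injective. I do not expect any genuine obstacle beyond this bookkeeping.
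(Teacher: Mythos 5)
Your proposal is correct and follows essentially the same route as the paper: the paper phrases the remaining half as injectivity of $i_0^*$ rather than surjectivity of $p^*$, but both arguments rest on the identical commutative square over the generic point and on the same three inputs (\Cref{cor:gen-stalk}, \Cref{cor:inj} applied to $X\times\A^1$, and \Cref{lemma:res-zar-site} combined with homotopy invariance of $\scrF$). The bookkeeping point you flag, namely $\scrF_\Zar(\A^1_{k(X)})=\colim_{W}\scrF_\Zar(W\times\A^1)$, is exactly how the paper justifies injectivity of the map to the generic fibre as well.
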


\begin{proof}
Let $i_0$ be the zero section $i_0\colon X\to X\times\A^1$, and write $K$ for the function field $k(X)$ of $X$. We then have $p\circ i_0=\id_X$, where $p\colon X\times \A^1\to X$ is the projection. Hence the induced map $i_0^*\colon \scrF_\Zar(X\times\A^1)\to \scrF_\Zar(X)$ is split surjective, and it remains to show that $i_0^*$ is injective. Consider the commutative diagram
\[\begin{tikzcd}
\scrF_\Zar(X\times\A^1)\ar[r, "(\eta\times1)^*"]\ar[d,swap,"i_0^*"] & \scrF_\Zar(\A^1_{K})\ar[d, "i_0^*"]\ar[equal]{r} & \scrF^X_\Zar(\A^1_{K})\ar{d}{(i_0^X)^*} \\
\scrF_\Zar(X)\ar[r,"\eta^*"] & \scrF_\Zar(K)\ar[equal]{r} & \scrF_\Zar^X(K),
\end{tikzcd}\]
where the right hand vertical map is the map on $\scrF_\Zar^X$ induced by the zero section. The homomorphisms $\eta^*$ and $(\eta\times1)^*$ are injective by \Cref{cor:gen-stalk} and \Cref{cor:inj}, respectively. Now, notice that $(i_0^X)^*=i_0^*$ by the definition of the presheaf $\scrF^X$. Hence the right hand square is commutative, and thus it suffices to show that the map $i_0^*\colon\scrF_\Zar^X(\A^1_K)\to\scrF^X_\Zar(K)$ is injective. Using \Cref{lemma:res-zar-site} along with homotopy invariance of the presheaf $\scrF^X$ (see \Cref{rem:Xhtpy2}) we find
\[
\scrF^X_\Zar(\A^1_{K})\cong\scrF^X(\A^1_{K})\cong\scrF^X(K)=\scrF^X_\Zar(K).
\]
Hence the right hand vertical map is an isomorphism. We conclude that $i_0^*\colon \scrF_\Zar(X\times\A^1)\to \scrF_\Zar(X)$ is injective.
\end{proof}

\subsection*{Homotopy invariance of $\scrF_\Nis$}
We proceed to prove homotopy invariance of the associated Nisnevich sheaf $\scrF_\Nis$, the proof being similar to the one for Zariski sheafification using Nisnevich excision. If $A$ is a local ring, let $A^h$ denote the henselization of $A$. We may write $\scrF(\Spec(\calO_{X,x}^h))$ or $\scrF(\calO_{X,x}^h)$ for the stalk of $\scrF$ at $x$ in the Nisnevich topology. Thus $\scrF(\calO_{X,x}^h)=\varinjlim_V\scrF(V)$, where the colimit runs over the filtered system of étale neighborhoods of $x$ in $X$, i.e., étale morphisms $p\colon V\to X$ such that $p^{-1}(x)\cong x$.

\begin{lemma}\label{lemma:Nis-injective}
For $U_x^h\defeq\Spec(\calO_{X,x}^h)$, the natural map $\scrF(U_x^h)\to \scrF(k(U_x^h))$ is injective.
\end{lemma}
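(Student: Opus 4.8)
The plan is to imitate the proof of \Cref{lemma:injective} one level up, replacing Zariski localization by henselization, so that the only geometric input is again \Cref{cor:vanishing-section} (equivalently, \Cref{thm:inj-loc-sch}).

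First I would set up the relevant filtered colimits. Since $X$ is smooth and irreducible, $\calO_{X,x}^h=\colim_{(V,v)}\calO_{V,v}$ is a regular local domain, the colimit running over pointed étale neighborhoods $(V,v)\to(X,x)$ with $k(v)=k(x)$; passing to the connected component of $v$ we may take each such $V$ smooth and irreducible, with $v$ a closed point of $V$ (it lies over the closed point $x$ and has residue field finite over $k(x)$). Correspondingly $\scrF(U_x^h)=\colim_{(V,v)}\scrF(V)$, while $k(U_x^h)=\colim_{(V,v)}k(V)$ and $\scrF(k(U_x^h))=\colim_W\scrF(W)$, the latter colimit taken over nonempty opens $W$ of the schemes $V$ appearing above.

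Now let $s\in\scrF(U_x^h)$ be a germ mapping to $0$ in $\scrF(k(U_x^h))$, and choose a representative $s'\in\scrF(V)$ on some Nisnevich neighborhood $(V,v)$. Because $s'$ dies at the generic point of $V$ inside the colimit defining $\scrF(k(U_x^h))$, after refining $(V,v)$ by a further pointed étale neighborhood I may assume there is a nonempty open $W\subseteq V$ with $s'|_W=0$. If $v\in W$, then $W$ is itself a Nisnevich neighborhood of $x$ and so $s=0$ already. If $v\notin W$, put $Z\defeq(V\setminus W)_\red$; this is a closed subscheme of the smooth scheme $V$ containing the closed point $v$, and $s'|_{V\setminus Z}=s'|_W=0$, so \Cref{cor:vanishing-section} gives $s'|_{\Spec\calO_{V,v}}=0$. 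Unwinding the colimit $\scrF(\Spec\calO_{V,v})=\colim_{W'\ni v}\scrF(W')$, there is then a Zariski-open $W'\ni v$ in $V$ with $s'|_{W'}=0$; and $W'$, being an open subscheme of $V$ through $v$ with $k(v)=k(x)$, is a Nisnevich neighborhood of $x$. In both cases $s'$ restricts to $0$ on a Nisnevich neighborhood of $x$, hence $s=0$ in $\scrF(U_x^h)$.

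I expect no serious obstacle here: the substantive work is already contained in \Cref{thm:inj-loc-sch}, and \Cref{thm:Nis-excision} is not needed for this particular statement. The only point requiring care is the bookkeeping with essentially smooth schemes—representing $s$ and $s'$ on honest étale neighborhoods, and then feeding the conclusion of \Cref{cor:vanishing-section} back into the Nisnevich stalk—which works precisely because Zariski opens through $v$ are Nisnevich neighborhoods of $x$.
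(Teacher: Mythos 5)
Your proposal is correct and follows essentially the same route as the paper: represent the germ on a Nisnevich neighborhood, find an open where it vanishes, apply \Cref{cor:vanishing-section} to the reduced closed complement, and note that the resulting Zariski neighborhood is in particular a Nisnevich neighborhood. The only difference is that you carry out the filtered-colimit bookkeeping over pointed étale neighborhoods explicitly, whereas the paper elides this by working on $X$ directly; your version is, if anything, slightly more careful.
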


\begin{proof}
Suppose that $s\in \scrF(U_x^h)$ maps to $0$ in $\scrF(k(U_x^h))$. This means that there is some étale neighborhood $p\colon W\to X$ such that $s|_W=0$. Replacing $W$ by its open image, we may assume that $W\subseteq X$. Let $Z$ be the closed complement of $W$ in $X$. If $x\in W$ then $s=0$ in $\scrF(U_x^h)$; if not then $x\in Z$, and thus \Cref{cor:vanishing-section} shows that $s|_V=0$ for some Zariski neighborhood $V$ of $x$. Since $V$ is also an étale neighboorhood, it follows that $s=0$ in $\scrF(U_x^h)$.
\end{proof}

The next two corollaries follow from \Cref{lemma:Nis-injective} similarly to the Zariski case.

\begin{corollary}\label{cor:Nis-gen-stalk}
The map $\eta^*\colon \scrF_\Nis(X)\to \scrF_\Nis(K)$ is injective.
\end{corollary}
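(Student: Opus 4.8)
The plan is to mirror the proof of \Cref{cor:gen-stalk} in the Nisnevich setting, using \Cref{lemma:Nis-injective} in place of \Cref{lemma:injective}. So suppose $s\in\scrF_\Nis(X)$ is a section mapping to $0$ in $\scrF_\Nis(k(X))$; the goal is to conclude $s=0$. Since $\scrF_\Nis$ is a Nisnevich sheaf and $X$ is of finite type over $k$, hence Jacobson, it is enough to check that the germ $s_x$ of $s$ vanishes at every closed point $x\in X$: the Nisnevich local rings $\Spec\calO_{X,x}^h$ at closed points form a conservative family of points for the Nisnevich topos of such an $X$, exactly as the closed points are used in the Zariski argument. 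Because sheafification does not alter stalks, this germ is an element $s_x\in\scrF(\calO_{X,x}^h)=\colim_V\scrF(V)$, the colimit running over Nisnevich neighbourhoods of $x$ as in the discussion preceding \Cref{lemma:Nis-injective}.

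Here is the one place where the argument departs slightly from the Zariski case: the target of the injection in \Cref{lemma:Nis-injective} is $\scrF(k(\calO_{X,x}^h))$, the function field of the henselization, rather than $\scrF(k(X))$ itself, so one must first see that the image of $s_x$ in $\scrF(k(\calO_{X,x}^h))$ is zero. For this, note that $\calO_{X,x}^h$ is an integral domain --- $X$ being smooth, $\calO_{X,x}$ is a regular, hence normal, local ring, and its henselization is again a normal local domain --- so the composite $\Spec k(\calO_{X,x}^h)\to\Spec\calO_{X,x}^h\to X$ carries the unique point to the generic point of $X$; that is, it factors through $\eta\colon\Spec k(X)\to X$ via the inclusion $k(X)\hookrightarrow k(\calO_{X,x}^h)$. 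Restricting $s$ along this composite therefore computes, on the one hand, the image of $s_x$ in $\scrF(k(\calO_{X,x}^h))$, and on the other hand the restriction along $k(X)\hookrightarrow k(\calO_{X,x}^h)$ of the image of $s$ in $\scrF_\Nis(k(X))=\scrF(k(X))$; by hypothesis the latter is $0$, so the former is $0$.

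With this in hand, \Cref{lemma:Nis-injective} gives at once that $s_x=0$ in $\scrF(\calO_{X,x}^h)$. As $x$ was an arbitrary closed point, $s$ has trivial germ at every closed point of $X$, and by the reduction above this forces $s=0$, proving injectivity of $\eta^*$. The only genuinely non-formal inputs are \Cref{lemma:Nis-injective} (already available) together with the identification of the Nisnevich stalk at a closed point with $\scrF(\calO_{X,x}^h)$ and the compatibility of the restriction maps involved; I expect the sole (minor) obstacle to be the bookkeeping of the generic-point factorization in the second paragraph --- in particular, verifying that $\calO_{X,x}^h$ is a domain, which is where smoothness enters --- everything else being formal once \Cref{lemma:Nis-injective} is granted.
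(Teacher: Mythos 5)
Your route is the one the paper intends (it says only that the corollary ``follows from \Cref{lemma:Nis-injective} similarly to the Zariski case''), and the part you flag as delicate---the factorization of $\Spec k(\calO_{X,x}^h)\to X$ through the generic point, using that the henselization of the regular local ring $\calO_{X,x}$ is again a domain---is handled correctly. The genuine problem is the reduction you treat as formal: the assertion that the henselizations at \emph{closed} points form a conservative family of points for the Nisnevich topos. That is true for the Zariski topology on a Jacobson scheme (which is what \Cref{cor:gen-stalk} uses), but it is false for the Nisnevich topology, already for $X=\A^1_k$ with $k$ algebraically closed. A family of \'etale maps $\{V_i\to X\}$ admitting, over every closed point, a lift with trivial residue extension need not be a Nisnevich covering, because the lifting condition can fail at the generic point: the two maps $\{w^2=z\}\cap(\G_m\times\G_m)\to\A^1$ and $\{w^2=z-1,\ w\neq 0\}\to\A^1$ jointly hit every closed point of $\A^1$ with trivial residue extension, yet no point above the generic point has residue field $k(t)$. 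Accordingly, if $j$ denotes the disjoint union of these maps, the Nisnevich sheaf $\coker(j_!\Z\to\Z)$ is nonzero (its stalk at the generic point is $\Z$) although all of its closed-point stalks vanish. So ``the germ of $s$ vanishes at every closed point'' does not by itself yield $s=0$.

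The argument is repairable, but one must use more than the closed points. The hypothesis $\eta^*s=0$ already gives vanishing of $s$ on a dense Zariski open $W\subseteq X$ (the generic stalk is the filtered colimit over such $W$), which together with your closed-point vanishing settles the case $\dim X\le 1$; for higher-dimensional $X$ one must also kill the germs at the intermediate points of $X\setminus W$. Two standard ways to finish: either extend \Cref{lemma:Nis-injective} (equivalently \Cref{cor:vanishing-section}) from closed points to arbitrary points by the usual limit/specialization argument, after which your argument applies verbatim at every point of $X$; or observe that the proof of \Cref{lemma:Nis-injective} actually produces a \emph{Zariski} neighbourhood of $x$ on which the relevant section dies, so that $s$ is seen to vanish on a Zariski open cover of $X$, where closed points do suffice. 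Some such supplement is needed; as written, the step invoking a ``conservative family of closed points'' would fail.
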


\begin{corollary}\label{cor:Nis-inj}
For any nonempty open subscheme $i\colon V\hookrightarrow X$, the map $i^*\colon \scrF_\Nis(X)\to \scrF_\Nis(V)$ is injective.
\end{corollary}

\begin{lemma}\label{lemma:Nis-exc-colim}
Let $x$ be a closed point in $\A^1_K$. Write $U_x\defeq\Spec(\calO_{\A^1_K,x})$ and $U_x^h\defeq\Spec(\calO^h_{\A^1_K,x})$. Then there is a natural isomorphism
\[
\dfrac{\scrF^X(U_x\setminus x)}{\scrF^X(U_x)}\xrightarrow{\cong}\dfrac{\scrF^X(U_x^h\setminus x)}{\scrF^X(U_x^h)}.
\]
\end{lemma}

\begin{proof}
We have
\begin{align*}
\dfrac{\scrF^X(U_x^h\setminus x)}{\scrF^X(U_x^h)} &=\varinjlim_{W\to\A^1_K}\dfrac{\scrF^X(W\setminus x)}{\scrF^X(W)}\\
&=\varinjlim_{W\to\A^1_K}\varinjlim_{W'\subseteq W}\dfrac{\scrF^X(W'\setminus x)}{\scrF^X(W')}\\
&=\varinjlim_{W\to\A^1_K}\dfrac{\scrF^X(\Spec(\calO_{W,x})\setminus x)}{\scrF^X(\Spec(\calO_{W,x}))}\\
&\xrightarrow{\cong} \varinjlim_{W\to\A^1_K}\dfrac{\scrF^X(U_x\setminus x)}{\scrF^X(U_x)}=\dfrac{\scrF^X(U_x\setminus x)}{\scrF^X(U_x)}.
\end{align*}
Here $W$ runs over all étale neighborhoods of $x$ in $\A^1_K$; $W'$ runs over all Zariski open neighborhoods of $x$ in $W$; and the fourth isomorphism is given by Nisnevich excision.
\end{proof}

\begin{lemma}\label{lemma:sheafification}
The sheafification map $\psi\colon \scrF^X(\A^1_{K})\to \scrF^X_\Nis(\A^1_{K})$ is an isomorphism.
\end{lemma}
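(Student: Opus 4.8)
The plan is to run the argument of \Cref{lemma:res-zar-site} with the Zariski topology replaced by the Nisnevich topology, Zariski excision (\Cref{thm:Zar-excision}) replaced by Nisnevich excision (\Cref{thm:Nis-excision}), and the local ring $\calO_{\A^1_K,x}$ replaced by its henselization. Write $K\defeq k(X)$.

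\emph{Injectivity of $\psi$.} Exactly as in \Cref{lemma:res-zar-site}: homotopy invariance gives an isomorphism $p^*\colon\scrF(K)\xrightarrow{\cong}\scrF(\A^1_K)$, and since passing to the associated Nisnevich sheaf does not alter the value of $\scrF$ on the point $\Spec K$, the composite
\[
\scrF(K)\xrightarrow{p^*}\scrF(\A^1_K)\xrightarrow{\psi}\scrF_\Nis(\A^1_K)\xrightarrow{i_0^*}\scrF_\Nis(K)
\]
is the canonical isomorphism $\scrF(K)=\scrF_\Nis(K)$. As $p^*$ is invertible, $\psi$ is injective.

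\emph{Surjectivity of $\psi$.} Let $s\in\scrF_\Nis(\A^1_K)$. Its germ at the generic point of $\A^1_K$ lies in $\scrF(k(\A^1_K))=\colim_V\scrF(V)$, so there are a nonempty open $V\subseteq\A^1_K$ and $s'\in\scrF(V)$ with $s|_V=s'|_V$ as sections of $\scrF_\Nis$; by induction on the finitely many deleted closed points we may take $V=\A^1_K\setminus x$. Set $U_x^h\defeq\Spec\calO^h_{\A^1_K,x}$. The key point is the chain of isomorphisms
\[
\frac{\scrF(V)}{\scrF(\A^1_K)}\xrightarrow{\ \cong\ }\frac{\scrF(\Spec\calO_{\A^1_K,x}\setminus x)}{\scrF(\Spec\calO_{\A^1_K,x})}\xrightarrow{\ \cong\ }\frac{\scrF(U_x^h\setminus x)}{\scrF(U_x^h)},
\]
in which the first arrow is \Cref{thm:Zar-excision} in the colimit over Zariski neighborhoods of $x$, and the second is obtained by applying \Cref{thm:Nis-excision} to the elementary distinguished Nisnevich squares
\[\begin{tikzcd}
W\setminus w\ar{r}\ar{d} & W\ar{d}\\
\A^1_K\setminus x\ar{r} & \A^1_K
\end{tikzcd}\]
attached to pointed étale neighborhoods $(W,w)$ of $x$ (after shrinking $W$ so that $w$ is the only point lying over $x$), and then passing to the colimit over such $(W,w)$, a system cofinal in the Nisnevich neighborhoods of $x$. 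Since sheafification changes $\scrF$ neither on $U_x^h$ (a henselian local scheme) nor on $U_x^h\setminus x$ (the spectrum of a field), the target of the chain equals $\scrF_\Nis(U_x^h\setminus x)/\scrF_\Nis(U_x^h)$. As in \Cref{lemma:res-zar-site}, the image of $s'$ in $\scrF(U_x^h\setminus x)$ coincides with the image of the germ $s_x\in\scrF_\Nis(U_x^h)=\scrF(U_x^h)$ of $s$ at $x$, so $s'$ vanishes in $\scrF(U_x^h\setminus x)/\scrF(U_x^h)$; by the chain of isomorphisms it vanishes in $\scrF(V)/\scrF(\A^1_K)$, whence there is $s''\in\scrF(\A^1_K)$ with $s''|_V=s'$. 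Finally $\psi(s'')=s$: the germs of $s''$ and $s$ agree at every point of $V$ because $s''|_V=s'$ and $s'|_V=s|_V$, and at $x$ both $s''_x$ and $s_x$ lie in $\scrF(U_x^h)$ and have the same image in $\scrF(k(U_x^h))=\scrF(U_x^h\setminus x)$, hence are equal by \Cref{lemma:Nis-injective}.

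The main obstacle is the second isomorphism in the displayed chain. One must first carry out the standard reduction expressing $\A^1_{k(X)}$ as a filtered colimit of objects $W\times\A^1\in\Sm_k$, so that the excision theorems (which are stated over $\Sm_k$) genuinely apply and all quotient constructions of \Cref{prop:construction} commute with the colimit; then one must arrange, by shrinking, that the squares above are elementary distinguished, and check that the resulting system indexed by pointed étale neighborhoods of $x$ is cofinal among the Nisnevich neighborhoods, so that its colimit computes $\scrF(U_x^h\setminus x)/\scrF(U_x^h)$. Everything else — the germ chase and the interchange of colimits with the quotient constructions — is routine bookkeeping as in the Zariski case.
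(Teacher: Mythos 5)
Your proposal is correct and follows essentially the same route as the paper's proof: injectivity via homotopy invariance and the generic stalk, surjectivity via the chain of isomorphisms through Zariski and Nisnevich excision at the henselian local scheme, followed by the germ chase using \Cref{lemma:Nis-injective}. The extra detail you supply about realizing the Nisnevich excision isomorphism as a colimit over elementary distinguished squares attached to pointed étale neighborhoods is exactly the bookkeeping the paper leaves implicit.
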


\begin{proof}
Let $\xi$ be the generic point of $\A^1_K$.
By the same reasoning as in the proof of \Cref{lemma:res-zar-site}, the map $\scrF^X(\A^1_K)\to \scrF^X_\Nis(\A^1_K)$ is injective,
and it remains to show surjectivity. Let $s\in \scrF^X_\Nis(\A^1_K)$ be a section. Since the stalks $\scrF^X_\xi$ and $(\scrF^X_\Nis)_\xi$ coincide, there exists a Zariski open subscheme $V\subseteq\A^1_K$ and a section $s'\in \scrF^X(V)$ such that $\psi(s')=s|_V$ in $\scrF^X_\Nis(V)$. We wish to extend $s'\in \scrF^X(V)$ to a global section $s''\in \scrF^X(\A^1_K)$. Considering one point at a time, we may assume that $V=\A^1_K\setminus x$ for some closed point $x$. Let $U_x\defeq\Spec(\calO_{\A^1_K,x})$ and $U_x^h\defeq\Spec(\calO_{\A^1_K,x}^h)$. A lift of $s'$ to a section of $\scrF^X(\A^1_K)$ exists if and only if $s'$ maps to $0$ in the quotient $\scrF^X(V)/\scrF^X(\A^1_K)$. Consider the sequence
\[
\dfrac{\scrF^X(V)}{\scrF^X(\A^1_K)} \xrightarrow{\cong} \dfrac{\scrF^X(U_x\setminus x)}{\scrF^X(U_x)} \xrightarrow{\cong} \dfrac{\scrF^X(U_x^h\setminus x)}{\scrF^X(U_x^h)} \xrightarrow{\cong} \dfrac{\scrF^X_\Nis(U_x^h\setminus x)}{\scrF^X_\Nis(U_x^h)}.
\]
Here the left hand map is an isomorphism by \Cref{lemma:exc-colim}; the middle map is an isomorphism by \Cref{lemma:Nis-exc-colim}; and the right hand map is an isomorphism since both $\scrF^X(U_x^h\setminus x)$ and $\scrF^X(U_x^h)$ are stalks in the Nisnevich topology. Thus it is enough to show that $s'\in \scrF^X(V)$ maps to $0$ in $\scrF^X_\Nis(U_x^h\setminus x)/\scrF^X(U_x^h)$. But this follows from the commutativity of the diagram
\[\begin{tikzcd}
\scrF^X(V)\ar[r] & \scrF^X(U_x^h\setminus x)\\
\scrF^X(\A^1_K)\ar[r]\ar[u] & \scrF^X(U_x^h).\ar[u,hook]
\end{tikzcd}\]
Hence we can lift $s'$ to $s''\in \scrF^X(\A^1_K)$. It remains to check that $s''$ maps to $s\in \scrF^X_\Nis(\A^1_K)$. Knowing that $\psi(s''|_V)=s|_V\in \scrF^X_\Nis(V)$, it remains to show that $s''_x=s_x\in \scrF^X_\Nis(U_x^h)=\scrF^X(U_x^h)$. As $\scrF^X(U_x^h)$ injects into $\scrF^X(U_x^h\setminus x)=\scrF^X(k(U_x^h))$ by \Cref{lemma:Nis-injective}, it is sufficient to prove the equality in the latter stalk. This follows from the commutativity of the above diagram, using that both $s$ and $s''$ map to $s|_V$ in $\scrF^X_\Nis(V)$.\end{proof}

\begin{theorem}\label{thm:main1}
If $\scrF$ is a homotopy invariant presheaf on $\wt\Cor_k$, then $\scrF_\Nis$ is also homotopy invariant.
\end{theorem}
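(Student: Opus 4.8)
The plan is to repeat verbatim the argument just used for $\scrF_\Zar$, substituting Nisnevich excision (\Cref{thm:Nis-excision}) for Zariski excision and \Cref{lemma:sheafification} for \Cref{lemma:res-zar-site}; all the real content is already packaged in those statements. First I would reduce to the case of a smooth irreducible base: since $\scrF_\Nis$ sends finite disjoint unions to finite products and every object of $\Sm_k$ is a finite coproduct of smooth irreducible schemes, it suffices to prove that for $X\in\Sm_k$ smooth irreducible, with generic point $\eta\colon\Spec k(X)\to X$, the zero section $i_0\colon X\to X\times\A^1$ induces an isomorphism $i_0^*\colon\scrF_\Nis(X\times\A^1)\to\scrF_\Nis(X)$. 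Since $p\circ i_0=\id_X$ for the projection $p\colon X\times\A^1\to X$, the map $i_0^*$ is split surjective, so only injectivity remains.

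For injectivity I would use the commutative square
\[\begin{tikzcd}
\scrF_\Nis(X\times\A^1)\ar[r, "(\eta\times1)^*"]\ar[d,swap,"i_0^*"] & \scrF_\Nis(\A^1_{k(X)})\ar[d, "i_0^*"]\\
\scrF_\Nis(X)\ar[r,"\eta^*"] & \scrF_\Nis(k(X)),
\end{tikzcd}\]
and show that the top map is injective while the right-hand map is an isomorphism. The bottom map $\eta^*$ is injective by \Cref{cor:Nis-gen-stalk}. For the top map, one writes $\scrF_\Nis(\A^1_{k(X)})=\colim_{\varnothing\neq W\subseteq X}\scrF_\Nis(W\times\A^1)$ exactly as in the $\scrF_\Zar$ case (continuity of the Nisnevich sheaf along the cofiltered system of nonempty open subschemes of $X$), and then \Cref{cor:Nis-inj} applied to each open immersion $W\times\A^1\hookrightarrow X\times\A^1$ forces $(\eta\times1)^*$ to be injective. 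For the right-hand map, \Cref{lemma:sheafification} gives an isomorphism $\scrF(\A^1_{k(X)})\xrightarrow{\cong}\scrF_\Nis(\A^1_{k(X)})$, while homotopy invariance of $\scrF$ together with the identification $\scrF_\Nis(k(X))=\scrF(k(X))$ (a stalk) identifies $i_0^*\colon\scrF_\Nis(\A^1_{k(X)})\to\scrF_\Nis(k(X))$ with the isomorphism $i_0^*\colon\scrF(\A^1_{k(X)})\xrightarrow{\cong}\scrF(k(X))$. The argument then closes with a diagram chase: if $i_0^*(s)=0$ for some $s\in\scrF_\Nis(X\times\A^1)$, commutativity gives $i_0^*\bigl((\eta\times1)^*(s)\bigr)=\eta^*\bigl(i_0^*(s)\bigr)=0$; injectivity of the right vertical map yields $(\eta\times1)^*(s)=0$, and injectivity of the top map yields $s=0$. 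Hence $i_0^*$ is injective, therefore an isomorphism, and $\scrF_\Nis$ is homotopy invariant.

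I do not expect a genuine obstacle at this stage, since this theorem is the formal assembly of the machinery built in the preceding sections: the geometric difficulty has already been absorbed into the excision theorems (\Cref{thm:Zar-excision} and \Cref{thm:Nis-excision}), the injectivity statement \Cref{thm:inj-loc-sch}, and above all \Cref{lemma:sheafification}, whose proof chains both excision isomorphisms together with the stalk computations. The only points that require a little care here are the continuity identity $\scrF_\Nis(\A^1_{k(X)})=\colim_W\scrF_\Nis(W\times\A^1)$ (standard, and already invoked for $\scrF_\Zar$) and running the diagram chase in the correct direction; neither is substantive.
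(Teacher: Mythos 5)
Your proposal is correct and follows essentially the same route as the paper: the same commutative square comparing $X\times\A^1$ with $\A^1_{k(X)}$ via the generic point, with injectivity of the horizontal maps supplied by \Cref{cor:Nis-gen-stalk} and \Cref{cor:Nis-inj} and the right vertical map identified as an isomorphism via \Cref{lemma:sheafification} and homotopy invariance of $\scrF$. The only differences are cosmetic — you spell out the reduction to irreducible $X$, the split surjectivity, and the final diagram chase, which the paper leaves implicit.
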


\begin{proof}We must show that the map $i_0^*\colon \scrF_\Nis(X\times\A^1)\to\scrF_\Nis(X)$ induced by the zero section is injective. As in the proof of \Cref{thm:Zar-htpy-inv} we consider the commutative diagram
\[\begin{tikzcd}
\scrF_\Nis(X\times\A^1)\ar[d,swap,"i_0^*"]\ar[r,"(\eta\times1)^*"]  & \scrF_\Nis^X(\A^1_{K})\ar{d}{i_0^*} \\
\scrF_\Nis(X)\ar[r,"\eta^*"] & \scrF_\Nis^X(K).
\end{tikzcd}\]
The homomorphisms $\eta^*$ and $(\eta\times1)^*$ are injective by \Cref{cor:Nis-gen-stalk} and \Cref{cor:Nis-inj}, respectively. Using \Cref{lemma:sheafification} along with homotopy invariance of the presheaf $\scrF^X$, we find that the right hand vertical map $i_0^*$ is an isomorphism. Hence $i_0^*\colon\scrF_\Nis(X\times\A^1)\to\scrF_\Nis(X)$ is injective.
\end{proof}

\providecommand{\bysame}{\leavevmode\hbox to3em{\hrulefill}\thinspace}
\providecommand{\MR}{\relax\ifhmode\unskip\space\fi MR }
\providecommand{\MRhref}[2]{%
  \href{http://www.ams.org/mathscinet-getitem?mr=#1}{#2}
}
\providecommand{\href}[2]{#2}

\end{document}